\crefname{algocf}{Algorithm}{Algorithms} %
\DeclareSymbolFont{bbold}{U}{bbold}{m}{n}
\DeclareSymbolFontAlphabet{\mathbbold}{bbold}
\DeclareSymbolFontAlphabet{\mathbb}{AMSb}%
\newtheorem{theorem}{Theorem}[section]
\newtheorem*{theorem*}{Theorem}
\newtheorem{corollary}[theorem]{Corollary}
\newtheorem{lemma}[theorem]{Lemma}
\newtheorem*{lemma*}{Lemma}
\newtheorem{remark}[theorem]{Remark}
\newtheorem{prop}[theorem]{Proposition}
\newtheorem{definition}[theorem]{Definition}
\newcommand{\N}[0]{\mathbb{N}}
\newcommand{\Q}[0]{\mathbb{Q}}
\newcommand{\R}[0]{\mathbb{R}}
\newcommand{\K}[0]{\mathcal{K}}
\newcommand{\U}[0]{\mathcal{U}}
\renewcommand{\P}[0]{\mathbb{P}}
\newcommand{\Int}[2]{\displaystyle\int_{#1}^{#2}}
\newcommand{\Sum}[2]{\displaystyle\sum\limits_{#1}^{#2}}
\newcommand{\Reu}[2]{\displaystyle\bigcup\limits_{#1}^{#2}}
\newcommand{\E}[1]{\mathbb{E}\left[#1\right]}
\newcommand{\dr}[3]{\cfrac{\partial ^{#2} {#3}}{\partial {#1}^{#2}}}
\newcommand{\dd}[0]{\mathrm{d}}
\definecolor{mybluei}{RGB}{0,173,239}
\renewcommand{\epsilon}{\varepsilon}
\DeclareMathOperator{\argmin}{argmin}
\DeclareMathOperator{\argmax}{argmax}
\DeclareMathOperator{\conv}{conv}
\DeclareMathOperator{\supp}{supp}
\newcommand{\W}[0]{\mathrm{W}}
\newcommand{\interior}[1]{\mathring{#1}}
\newcommand{\app}[4]{\left\lbrace\begin{array}{ccc}
   #1 & \longrightarrow & #2 \\
   #3 & \longmapsto & #4 \\
\end{array} \right.}
\newcommand{\oll}[1]{\overline{#1}}
\renewcommand{\O}{\mathcal{O}}
\newcommand\eqquestion{\stackrel{\mathclap{\normalfont\mbox{?}}}{=}}
\newcommand{\step}[2]{\textrm{---} \textit{Step #1}: #2}
\newcommand{\pot}{\varphi}
\newcommand{\gradlip}{L}
\newcommand{\strcvx}{\ell}
\newcommand{\Funs}{\mathcal{F}_{\mathcal{E}, \gradlip, \strcvx}}
\newcommand{\FunsRd}{\mathcal{F}_{\gradlip, \strcvx}}
\newcommand{\Tc}{\mathcal{T}_c}
\newcommand{\TC}{\mathcal{T}_C}
\newcommand{\linv}[1]{#1^{\rightarrow}}
\newcommand{\rinv}[1]{#1^{\leftarrow}}
\newcommand{\Leb}{\mathscr{L}}
\newcommand{\X}{\mathcal{X}}
\newcommand{\Y}{\mathcal{Y}}
\newcommand{\T}{\mathcal{T}}
\newcommand{\GNN}{G_{\mathrm{NN}}}
\newcommand{\RKHS}{\mathcal{H}}
\newcommand{\I}{\mathbf{I}}
\newcommand{\blue}[1]{{\color{BlueViolet}#1}}  %
\renewcommand{\blue}[1]{#1}
\renewcommand*{\fps@figure}{h}  %
\title{Constrained Approximate Optimal Transport Maps}
\author[1]{Eloi Tanguy}
\author[2]{Agnès Desolneux}
\author[1]{Julie Delon}
\affil[1]{Universit\'e Paris Cit\'e, CNRS, MAP5, F-75006 Paris, France}
\affil[2]{Centre Borelli, CNRS and ENS Paris-Saclay, F-91190 Gif-sur-Yvette, France}
\date{12th March 2025}
\begin{document}
\maketitle

\begin{abstract}
	We investigate finding a map $g$ within a function class $G$ that minimises an
Optimal Transport (OT) cost between a target measure $\nu$ and the image by $g$
of a source measure $\mu$. This is relevant when an OT map from $\mu$ to $\nu$
does not exist or does not satisfy the desired constraints of $G$. We address
existence and uniqueness for generic subclasses of $L$-Lipschitz functions,
including gradients of (strongly) convex functions and typical Neural Networks.
We explore a variant that approaches a transport plan, showing equivalence to a
map problem in some cases. For the squared Euclidean cost, we propose
alternating minimisation over a transport plan $\pi$ and map $g$, with the
optimisation over $g$ being the $L^2$ projection on $G$ of the barycentric
mapping $\overline{\pi}$. In dimension one, this global problem equates the
$L^2$ projection of $\overline{\pi^*}$ onto $G$ for an OT plan $\pi^*$ between
$\mu$ and $\nu$, but this does not extend to higher dimensions. We introduce a
simple kernel method to find $g$ within a Reproducing Kernel Hilbert Space in
the discrete case. We present numerical methods for $L$-Lipschitz gradients of
$\ell$-strongly convex potentials\blue{, and study the convergence of Stochastic
Gradient Descent methods for Neural Networks. We finish with an illustration  on
colour transfer, applying learned maps on new images, and showcasing outlier
robustness.}

\end{abstract}

\tableofcontents

\section{Introduction}
Let $\mu$ and $\nu$ denote two probability distributions on two (potentially
different) measurable spaces $\X$ and $\Y$. Many problems in applied fields can
be written under the form
\begin{equation}
  \label{eq:pb_general}
  \underset{g \in G}{\inf}\ \mathcal{D}(g\# \mu,\nu),
\end{equation}
where $\#$ denotes the \textit{push-forward} operation\footnote{The image
measure $g\#\mu$ is defined as the law of $g(X)$ for $X$ a random variable of
law $\mu$, or more abstractly by $g\#\mu(B) = \mu(g^{-1}(B))$ for any Borel set
$B \subset \mathcal{Y}$.}, $\mathcal{D}$ is a non-negative discrepancy (such as
a distance metric or a $\phi$-divergence) measuring the similarity between $g\#
\mu$ and $\nu$, and $G$ is a set of acceptable functions from $\mathcal{X}$ to
$\mathcal{Y}$. Under appropriate assumptions on $\mathcal{D}$, this problem can
be interpreted as a projection of $\nu$ on the set $G\# \mu :=\{g\#\mu,g\in G\}$
for the discrepancy $\mathcal{D}$. In this paper, we focus on cases where $\nu$
cannot be written as $g\#\mu$ for $g\in G$.\footnote{Obviously, if $\nu$ belongs
to $G\#\mu$, the problem is trivial and the infimum in~\cref{eqn:CATM} is $0$.}

\blue{In the highly popular field of generative modelling, the target
distribution is usually an empirical distribution composed of $m$ samples, $\nu
= \frac 1 m \sum_{i=1}^m \delta_{x_i}$, $\mu$ is an easy-to-sample latent
distribution (for instance a Gaussian distribution), and the set $G =
\{g_\theta,\;\theta \in \Theta\}$ generally  denotes functions represented by a
specific neural network architecture. The goal is to find the parameter $\theta$
such that $\mu_\theta:=g_\theta\#\mu$ fits $\nu$ as well as possible. Models
taking this form are often called push-forward generative
models~\cite{salmona2022push}, and include Variational Auto-Encoders
(VAEs)~\cite{kingma2013auto}, Generative Adversarial Networks
(GANs)~\cite{goodfellow2014generative}, Normalising
flows~\cite{rezende2015variational} and even Diffusion
Models~\cite{song2020score}, which can be reinterpreted as indirect push-forward
generative models~\cite{salmona2022push}. In these works, the discrepancy
$\mathcal{D}$ is often chosen as the Kullback-Leibler divergence, as it is the
case for traditional GANs and VAEs, or as the Wasserstein distance, like in
Wasserstein-GANs~\cite{arjovsky2017wasserstein}. The discrepancy
$\mathcal{D}(g_\theta\# \mu,\nu)$ is minimised in $\theta$, for instance by
using sophisticated versions of stochastic gradient descent. In such problems,
it is clear that $g_\theta\# \mu$ does not target  exactly $\nu$, since it would
mean that the model has only learned to reproduce existing samples, and not to
create new ones. This is possible because the expressivity of neural networks is
limited, but also because the training steps usually impose regularity
properties on $g_\theta$ and constrain its Lipschitz constant in order to
increase its robustness~\cite{scaman2019lipschitz,fazlyab2019efficient} or
stabilise its training \cite{miyato2018spectral}. It is therefore natural to
wonder to what extent the optimisation of such discrepancies with regularity
constraints on the set of functions $G$ is well-posed, depending on the choice
of $\mathcal{D}$, and what this means in practice.}

In an Euclidean setting, another example of~\cref{eq:pb_general} appears when we
need to compare two distributions $\mu$ and $\nu$ potentially living in spaces
of different dimensions, or when invariance to geometric transformations is
required (for problems such as shape matching or word embedding). In such cases,
it is usual to choose $G$ as a well chosen set of linear or affine embeddings
(such as matrices in the Stiefel manifold if the space dimension is different
between $\X$ and $\Y$). For instance, this idea underpins several sets of works
introducing global invariances in optimal
transport~\cite{alvarez2019towards,salmona2023gromov}.

In both of the previous examples, $G$ is parametrised by a set $\Theta$ of
parameters which is potentially extremely large (for neural networks) but of
finite dimension.   
Alternatively, the set of functions $G$ can be much more complex and
characterised by regularity or convexity assumptions, the problem becoming
non-parametric. This is typically the case in the field of optimal
transport~\cite{villani, santambrogio2015optimal}.
Given $\mu$ and $\nu$ probability measures on respective Polish spaces
$\mathcal{X}$ and $\mathcal{Y}$, Monge's Optimal Transport consists in finding a
Transport map $T$ such that $T\#\mu = \nu$ and which minimises a given
displacement cost. \blue{From a theoretical standpoint, the existence of
(unconstrained) Monge maps has been widely studied
\cite{brenier1991polar,gangbo1996geometry,pratelli2007equality}, under
regularity assumption for $\mu$.} When there is no map $T$ such that $T\#\mu =
\nu$ (for example, if $\mu$ is discrete and if $\nu$ is not), or when the map
solution does not meet the regularity requirements for some given practical
application,
it makes sense instead to solve problems of the form of \cref{eq:pb_general},
with $\mathcal{D}$ a Wasserstein distance and $G$ a set of functions with
acceptable regularity. For instance, as studied in~\cite{paty2020regularity},
$G$ can be composed of functions $g=\nabla \phi$ with $\phi$
$\strcvx$-strongly-convex with an $\gradlip$-Lipschitz gradient. For cases where
$\mu$ is discrete, this formulation also overcomes a classic shortcoming of
numerical optimal transport approaches, which usually compute solutions which
are only defined on the support of $\mu$. If a machine learning algorithm
requires the computation of the transport of new inputs, the map must be either
recomputed, or an approximation of the previous map must be defined outside of
the support of $\mu$. Several solutions have been proposed in the literature to
solve this problem~\cite{de2021consistent,black2020fliptest,manole2021plugin,
paty2020regularity,pooladian2024entropicestimationoptimaltransport,seguy2017large,perrot2016mapping},
and some of them~\cite{manole2021plugin,paty2020regularity} consists in
solving~\cref{eq:pb_general} with an appropriate set of functions $G$.
Consistency and asymptotic properties of such estimators are also the subject of
several of these
works~\cite{de2021consistent,manole2021plugin,hutter2021minimax}.

\blue{For the sake of legibility and to avoid excessive technicality, we focus
on the case where the target space is $\R^d$, however it is possible to extend
our considerations to a target space $\Y$ which is a Polish space verifying the
Heine-Borel property (i.e. that any bounded and closed set is a compact set),
which in particular allows the case where $\Y$ is a connected and complete
Riemannian manifold (in which case the Heine-Borel property follows from the
Hopf-Rinow Theorem, see \cite{do1992riemannian} Theorem 2.8). Similarly, the
problem naturally extends to the case where the codomain of the maps $g$ and the
target measure $\nu$ are different spaces $\Y, \Y'$.}

\textbf{OT discrepancies.} In this paper, we focus on problems of the
form~\cref{eq:pb_general} when $\mathcal{D}$ is chosen as an optimal transport
discrepancy for a general ground cost $c$. We recall that if $\X$ and $\Y$ are
two Polish spaces, the Optimal Transport cost between two measures $\nu_1 \in
\mathcal{P}(\X)$  and $\nu_2 \in \mathcal{P}(\Y)$ for a ground cost function
$c:\X\times \Y \longrightarrow \mathbb{R}_+$ is defined by the following
optimisation problem
\begin{equation}\label{eqn:Tc}
	\T_c(\nu_1, \nu_2) = \underset{\pi \in \Pi(\nu_1,
          \nu_2)}{\min}\ \int_{\X \times \Y} c(x, y) \dd\pi(x,y),
\end{equation}
where $\Pi(\nu_1, \nu_2)$ is the set of probability measures on $\X\times \Y$
whose first marginal is $\nu_1$ and second marginal is $\nu_2$\footnote{The fact
that the minimum is attained is a consequence of the direct method of calculus
of variations (see \cite{santambrogio2015optimal}, Theorem 1.7). The value of
$\T_c(\nu_1, \nu_2)$ may be $+\infty$, but a sufficient condition for
$\T_c(\nu_1, \nu_2)<+\infty$ (\cite{villani}, Remark 5.14) is that
$$\int_{\X \times \Y}c(x,y)\dd\nu_1(x)\dd\nu_2(y)<+\infty. $$}. Given this
method of quantifying the discrepancy between $g\#\mu$ and $\nu$,
\cref{eq:pb_general} becomes
\begin{equation}\label{eqn:CATM}
  \underset{g\in G}{\inf}\ \T_c(g\#\mu, \nu).
\end{equation}
In the case where the source measure is discrete and the target measure is
absolutely continuous, the Optimal Transport problem in \cref{eqn:CATM} is said
to be semi-discrete, and has a slightly more explicit expression (see
\cite{merigot2020ot_course} for a course on the matter). If we suppose in
addition that $c(x,y) = \|x-y\|_2^2$ and that the source measure weights are
uniform ($a_i = 1/n$), then \cref{eqn:CATM} is a constrained version the Optimal
Uniform Quantization problem studied thoroughly in
\cite{merigot2021bounds_approx_W2}.

\textbf{Existence of minimisers.} An important question regarding this
optimisation problem concerns the existence of minimisers, depending on the
ground cost $c$ and the set of functions $G$. While numerous works  in the
literature have focused on the convergence of optimisation algorithms (such as
stochastic gradient descent) to critical points for this kind of
problem~\cite{fatras2021minibatch}, the existence of minimisers has surprisingly
been little studied. We derive
in~\cref{thm:existence_G_c,thm:existence_G_c_parts} generic conditions to ensure
existence of such minimisers in $G$, and show counter-examples when these
conditions are not met. We also show that these conditions are satisfied for two
classes of functions, namely classes of $L$-Lipschitz functions which can be
written as gradient of $l$-strongly convex functions (recovering a result shown
in~\cite{paty2020regularity} as a particular case of
\cref{thm:existence_G_c_parts}), and classes of neural networks with Lipschitz
activation functions.  We also discuss uniqueness of the solutions, which is
usually not satisfied, and remains a difficult question without strong
assumptions on the set of functions $G$.

\textbf{Approximating a coupling.} In the field of optimal transport, a
particular setting where~\cref{eqn:CATM} is interesting is when we have access
to a non deterministic coupling $\pi$ solution of a regularised version of a
optimal transport between two probability measures $\mu $ and $\nu$. For
instance, the entropic optimal transport~\cite{peyre2019computational}, or the
mixture Wasserstein formulation~\cite{delon2020wasserstein} both yield optimal
plans $\pi$ which cannot be trivially written as optimal maps between $\mu$ and
$\nu$. For some applications, it can be interesting to approximate $\pi$ by
another transport plan supported by the graph of a function with possible
additional regularity assumptions. This can be done by approximating $\pi$ by
$(I,g)\#\mu$, with specific regularity properties on $g$, which is a particular
case of~\cref{eqn:CATM}, replacing $\nu$ by $\pi$ and $G$ by the set $H :=
\{(I,g),g\in G\}$.
In this specific setting, we show in \cref{sec:plan_problem} under which
conditions on the ground cost $c$ the solutions of this problem between plans
are equivalent to solutions of the original~\cref{eqn:CATM} when $\pi \in
\Pi(\mu,\nu)$. \blue{Numerical approaches seeking maps that approach barycentric
projections have been studied in \cite{seguy2017large,perrot2016mapping}.}

\textbf{Alternate minimisation.} Under appropriate assumptions, \cref{eqn:CATM}
can be rewritten as a minimisation problem over $\pi\in \Pi(\mu, \nu)$ and $g\in
G$:
\begin{equation}
\underset{g\in G}{\min}\ \underset{\pi\in\Pi(\mu, \nu)}{\min}\
\int_{\X\times \Y}c(g(x), y)\dd\pi(x,y).
\label{eq:alternate_min}
\end{equation}
This naturally leads to consider \cref{eqn:CATM} as an alternate minimisation
problem, that we study  in \cref{sec:alternate_minimisation} in the Euclidean
case when $c(x,y) = \|x-y\|_2^2$. More precisely, we show that ~\cref{eqn:CATM}
is strongly linked to the barycentric projection problem: when $\pi$ is fixed,
the solution $g$ minimising~\cref{eq:alternate_min} can be reinterpreted as the
$L^2$-projection of the barycentric projection of $\pi$ on the set $G$. In the
one-dimensional case, when $G$ is a subclass of increasing functions, this
yields an explicit solution to the problem (as it was shown
in~\cite{paty2020regularity} in a more specific case), and we show that this
explicit solution does not hold in dimension larger than $1$ by presenting a
counter-example.

\textbf{Outline of the paper.} In this work, we address problem \cref{eqn:CATM}
for large classes of functions $G$. In \cref{section:constrained}, we define the
problem and establish general conditions for the existence of solutions,
exploring examples involving gradients of convex functions and neural networks.
\cref{sec:alternate_minimisation} examines the link between \cref{eqn:CATM} and
a constrained barycentric projection problem, demonstrating an explicit solution
in one dimension and providing a counterexample in higher dimension.
\cref{sec:discrete} focuses on practical numerical methods to solve the
optimisation problems \blue{for Lipschitz gradients of strongly convex
potentials, kernel methods and Neural Networks. We conclude with an illustration
on colour transfer.}
\section{A Constrained Approximate Transport Map Problem}
\label{section:constrained}
\subsection{Problem Definition}

We consider $(\X, d_\X)$ a locally compact Polish space, and $\mu\in
\mathcal{P}(\X)$ a probability measure on $\X$. Our objective is to find a map
$g: \X \longrightarrow \R^d$ verifying the constraint $g\in G$ for some class of
functions $G \subset (\R^d)^\X$, such that the image measure $g\#\mu$ is "close"
to a fixed probability measure $\nu \in \mathcal{P}(\R^d)$, in the sense
of~\cref{eqn:CATM}.

Applying the definition of $\Tc$ directly (\cref{eqn:Tc}) yields the following
expression for \cref{eqn:CATM}:
\begin{equation}\label{eqn:CATM_before_cov}
	\underset{g\in G}{\inf}\  \underset{\pi \in \Pi(g\#\mu, \nu)}{\min}\
	\int_{\X\times\R^{d}} c(x, y) \dd\pi(x,y).
\end{equation}
The optimisation variable $g$ acts on the set of constraints of the Optimal
Transport problem, however thanks to a well-known "change of variables" result
(\cite{dumont22gromovmap} Lemmas 1 and 2 for a reference), we will be able to
reformulate \cref{eqn:CATM_before_cov}. In the following, we shall denote by
$\Pi_c^*(\nu_1, \nu_2)$ the set of minimisers of the optimal transport problem
\cref{eqn:Tc} between two measures $\nu_1$ and $\nu_2$. 

\begin{lemma}\label{lemma:plans_and_push_forwads} (\cite{dumont22gromovmap},
	Lemmas \blue{2.6 and 2.7}) Let $\mathcal{X}, \mathcal{Y}, \mathcal{X}',
	\mathcal{Y}'$ be Polish spaces. Let $g:
	\mathcal{X}\longrightarrow\mathcal{X}'$ and $h:
	\mathcal{Y}\longrightarrow\mathcal{Y}'$ two measurable maps and let
	$(\mu,\nu) \in \mathcal{P}(\mathcal{X})\times \mathcal{P}(\mathcal{Y})$.
	Consider two costs $c: \mathcal{X}\times \mathcal{Y} \longrightarrow \R$ and
	$c': \mathcal{X}'\times \mathcal{Y}'\longrightarrow \R$ such that $\forall
	(x,y)\in \mathcal{X}\times\mathcal{Y},\; c(x,y)=c'(g(x), h(y))$.
	\begin{itemize}
		\item For any $\gamma' \in \Pi(g\#\mu, h\#\nu)$, there exists $\gamma
		\in \Pi(\mu, \nu)$ such that $\gamma' = (g, h)\#\gamma$.
		\item We have $\Pi_{c'}^*(g\#\mu, h\#\nu) = (g,h)\#\Pi_c^*(\mu, \nu)$.
	\end{itemize}
\end{lemma}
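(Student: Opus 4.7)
My plan is to prove the first item by a disintegration-based lifting argument, and to derive the second as a direct consequence via a change-of-variables computation on the costs. The crux is that the identity $c(x,y)=c'(g(x),h(y))$ makes the operation $\gamma\mapsto(g,h)\#\gamma$ cost-preserving between $\Pi(\mu,\nu)$ and $\Pi(g\#\mu,h\#\nu)$, so that optimality can be transferred between the two sides without loss.

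For the first item, I would invoke the disintegration theorem on the Polish spaces $\X$ and $\Y$ to write $\mu(\dd x)=\mu_{x'}(\dd x)(g\#\mu)(\dd x')$ with $\mu_{x'}$ concentrated on $g^{-1}(\{x'\})$ for $(g\#\mu)$-a.e.\ $x'$, and similarly $\nu(\dd y)=\nu_{y'}(\dd y)(h\#\nu)(\dd y')$. Given any $\gamma'\in\Pi(g\#\mu,h\#\nu)$, I would set
\[
\gamma(\dd x,\dd y) := \mu_{x'}(\dd x)\,\nu_{y'}(\dd y)\,\gamma'(\dd x', \dd y'),
\]
and verify that (i) $\gamma$ is a probability measure (by Fubini), (ii) its marginals are $\mu$ and $\nu$ (by integrating out the disintegrations and using that $\gamma'$ has marginals $g\#\mu$ and $h\#\nu$), and (iii) $(g,h)\#\gamma=\gamma'$ (using that $g(x)=x'$ for $\mu_{x'}$-a.e.\ $x$ and $h(y)=y'$ for $\nu_{y'}$-a.e.\ $y$, so that the push-forward collapses the inner integrals back onto $\gamma'$).

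Once the first item is established, the second follows by a two-step bookkeeping argument. For any $\gamma\in\Pi(\mu,\nu)$, the change-of-variables formula and the cost identity give $\int c'\,\dd((g,h)\#\gamma)=\int c\,\dd\gamma$, so that the two infima coincide and $(g,h)\#\gamma$ is optimal whenever $\gamma$ is, proving $(g,h)\#\Pi_c^*(\mu,\nu)\subset\Pi_{c'}^*(g\#\mu,h\#\nu)$. Conversely, any $\gamma'\in\Pi_{c'}^*(g\#\mu,h\#\nu)$ writes as $(g,h)\#\gamma$ for some $\gamma\in\Pi(\mu,\nu)$ by the first item, and the cost identity forces this $\gamma$ to attain the optimal source-side cost, hence to lie in $\Pi_c^*(\mu,\nu)$. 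The main obstacle is purely technical: setting up the disintegration rigorously in the Polish setting and checking the almost-sure concentration of the kernels $\mu_{x'},\nu_{y'}$ on the fibres of $g,h$; this rests on a classical result (e.g.\ Ambrosio--Gigli--Savar\'e, Theorem 5.3.1), after which the rest of the argument is essentially mechanical.
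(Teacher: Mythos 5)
Your proof is correct: the disintegration of $\mu$ and $\nu$ along the fibres of $g$ and $h$ gives the lifting in the first item, and the cost identity makes $\gamma\mapsto(g,h)\#\gamma$ a cost-preserving surjection from $\Pi(\mu,\nu)$ onto $\Pi(g\#\mu,h\#\nu)$, which transfers optimality in both directions. The paper does not prove the lemma itself but cites \cite{dumont22gromovmap}, and your argument is essentially the standard one behind those cited lemmas, so there is nothing to add beyond the minor bookkeeping you already flag (the kernels $\mu_{x'},\nu_{y'}$ are only defined for $(g\#\mu)$-a.e.\ $x'$ and $(h\#\nu)$-a.e.\ $y'$, which suffices since the marginals of $\gamma'$ are exactly these measures).
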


Using \cref{lemma:plans_and_push_forwads}, the energy of the map problem
\cref{eqn:CATM} can be written as follows:
\begin{align}\label{eqn:energy_formulation}
	\Tc(g\#\mu, \gamma) = \underset{\pi \in \Pi(\mu, \nu)}{\inf}\ \int_{\X\times \R^d}c(g(x), y)\dd\pi(x, y).
\end{align}

In our study of the map problem \cref{eqn:CATM}, we will consider classes $G$
that are a subset of the $\gradlip$-Lipschitz functions. The first reason is
that with unbounded Lipschitz constants, the problem may not have a solution, as
we shall see in \cref{sec:no_lip_no_existence}. Moreover, there are multiple
practical considerations that lead to choosing functions with an upper-bounded
Lipschitz constant. To begin with, numerous practical models enforce this
condition, such as Wasserstein GANs \cite{arjovsky2017wasserstein}, and
diffusion models \cite{song2020score} (see also \cite{salmona2022push} Appendix
S2), furthermore most neural networks are Lipschitz (since typical
non-linearities are chosen as Lipschitz), and the control of the Lipschitz
constant \blue{is often used} as a regularisation method
\cite{scaman2019lipschitz}. 
In \cref{fig:1d_ssnb_map_illu}, we illustrate a solution of the map problem
using numerical methods introduced in \cref{sec:numerics_grad_convex}, for two
different values of $L$  (the Lipschitz constant of the maps $g$).

\begin{figure}[ht]
    \centering
    \begin{adjustbox}{valign=c}
        \begin{subfigure}[b]{0.45\textwidth}
            \centering
            \includegraphics[width=\textwidth]{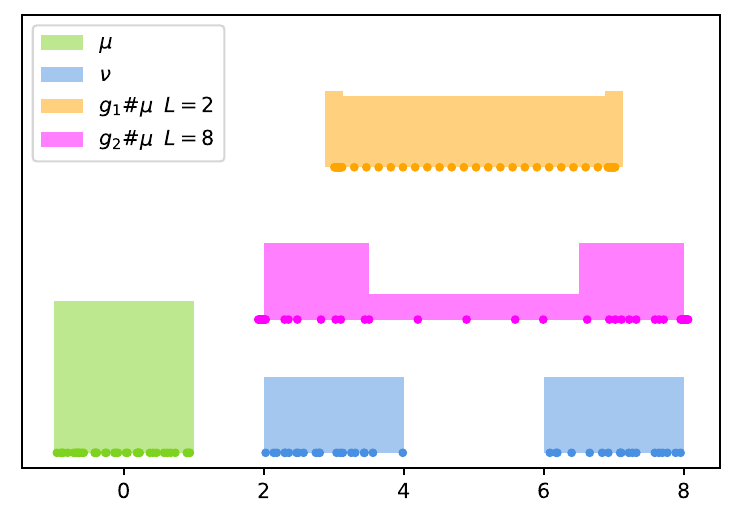}
            \caption{Illustration of the source, target and image measures using their samples and (approximate) densities.}
        \end{subfigure}
    \end{adjustbox}
    \hfill
    \begin{adjustbox}{valign=c}
        \begin{subfigure}[b]{0.45\textwidth}
            \centering
            \includegraphics[width=\textwidth]{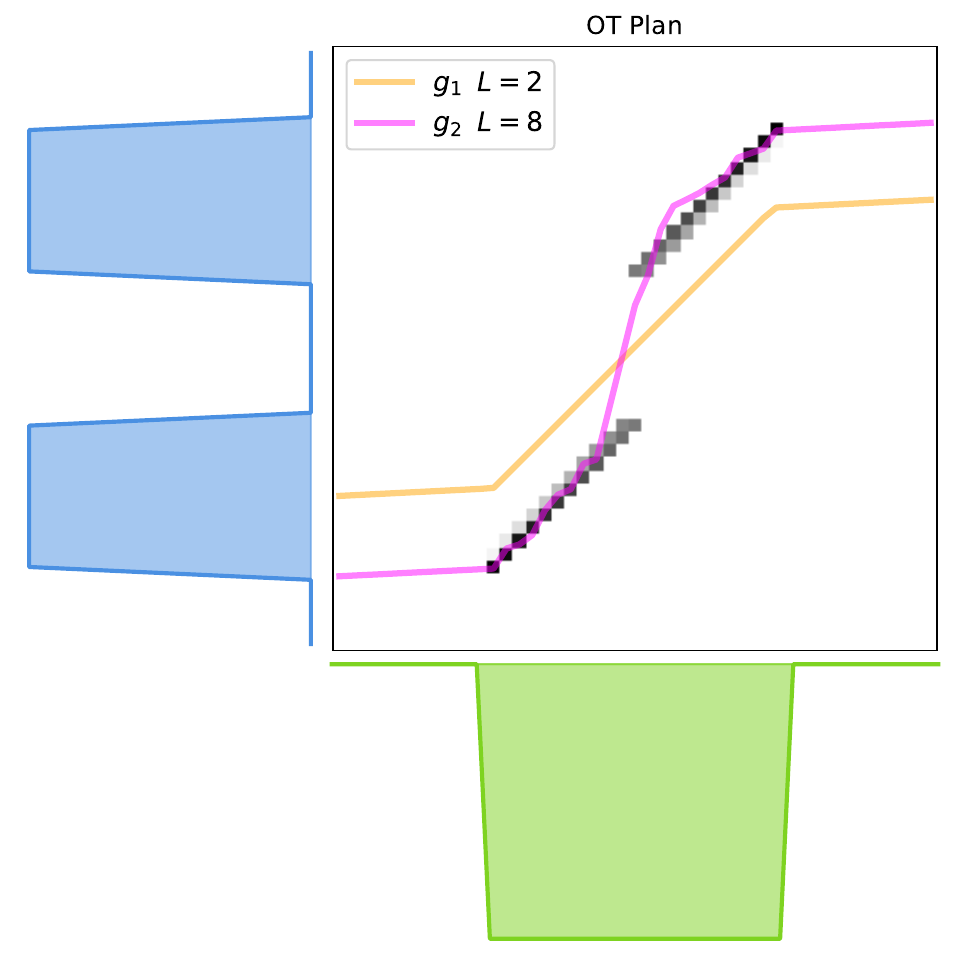}
            \caption{Illustration of map solutions and comparison with the 
            (discontinuous) Optimal Transport coupling.}
        \end{subfigure}
    \end{adjustbox}
    \caption{Illustration of solutions of maps problems (\cref{eqn:CATM}) on a
    toy dataset with a source measure $\mu = \mathcal{U}([-1, 1])$ and a target
    measure $\nu = \frac{1}{2}\mathcal{U}([2, 4]) + \frac{1}{2}\mathcal{U}([6,
    8])$. The two solutions are respectively $L=2$ and $L=8$ Lipschitz.}
    \label{fig:1d_ssnb_map_illu}
\end{figure}

\subsection{Existence of a Solution}

To formulate an existence result, we shall apply the direct method of calculus
of variations, which requires a technical condition on the stability of the
class of functions $G$ with respect to certain limits. To formulate this
condition, we will introduce the notion of closedness of a class of continuous
functions with respect to the compact-open topology. By \cite{kelley2017general}
Chapter 7 Theorem 11, in our setting, this topology is equivalent to the
topology of uniform convergence on compact sets, which allows us to formulate
\cref{def:stable_uniform_limit} in terms of local uniform convergence.

\begin{definition}\label{def:stable_uniform_limit} We say that a set of
    functions $G\subset (\R^d)^\X$ is \blue{\textbf{closed for the compact-open
    topology}} if there exists a sequence $(\K_m)$ of compact sets of $\X$
    verifying $\cup_m \K_m = \X$ such that: \\
  for any sequence $(g_n)_{n\in \N}\in G^\N$ such that for all $m$,
    $(g_n|_{\K_m})_{n\in \N}$ converges uniformly towards a function $g_{\K_m}:
    \K_m \longrightarrow \R^d$, there exists $g\in G$ such that $g|_{\K_m} =
    g_{\K_m}$ for all $m$.
\end{definition}

One can understand this condition as a form of "local uniform closedness" of the
class $G$.

\begin{theorem}\label{thm:existence_G_c} Let $c: \R^d \times \R^d
    \longrightarrow \R_+$ be a \blue{lower semi-continuous} cost function,
    \blue{$\mu\in \mathcal{P}(\X)$ be} a probability measure  on a locally
    compact Polish space $(\X, d_\X)$, and $\nu \in \mathcal{P}(\R^d)$.
    \blue{Assume that}
    \begin{itemize}
        \blue{\item[i) ] \textbf{(Coercive cost)} There exists $\eta: \R_+
        \longrightarrow \R_+$ non-decreasing and such that
        $\eta(t)\xrightarrow[t\longrightarrow +\infty]{} +\infty$,  and $\alpha
        \in \R$ such that $\forall y, y' \in \R^d,\; c(y,y')\geq \alpha +
        \eta(\|y-y'\|_2)$ and $\int \eta(\|\cdot\|_2)\dd\nu < +\infty$;}
        \blue{\item[ii) ] \textbf{(Lipschitzness and Closedness of $G$)} $G$ is
        a subset of the space of $\gradlip$-Lipschitz functions from $\X$ to
        $\R^d$, that is closed for the compact-open topology (see
        \cref{def:stable_uniform_limit});}
        \item[iii)] \textbf{(Problem finiteness)} There exists $g\in G$ such
        that $\T_c(g\#\mu, \nu) < +\infty$.
    \end{itemize}
    Then the problem $\underset{g\in G}{\argmin}\ \T_c(g\#\mu, \nu)$ has a
    solution.
\end{theorem}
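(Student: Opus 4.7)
I apply the direct method of calculus of variations, working with the reformulation~\cref{eqn:energy_formulation}: take a minimising sequence $(g_n)_n \subset G$, extract via Arzelà--Ascoli a subsequence converging locally uniformly to some $g$, use assumption (ii) to ensure $g \in G$, and conclude by lower semi-continuity of the energy.

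\textbf{Uniform boundedness on compacts (the main obstacle).} Since the $g_n$ are all $\gradlip$-Lipschitz, it suffices to bound $(g_n(x_0))_n$ at a single point $x_0 \in \supp(\mu)$. Suppose for contradiction that $\|g_n(x_0)\|_2 \to +\infty$ along a subsequence. Pick $r_0 > 0$; by definition of the support, $m_0 := \mu(B_\X(x_0, r_0)) > 0$. The Lipschitz property forces $\|g_n(x)\|_2 \geq \|g_n(x_0)\|_2 - \gradlip r_0$ for $x \in B_\X(x_0, r_0)$. By the integrability condition in (i) and a Markov-type argument using monotonicity of $\eta$ with $\eta(R) \to +\infty$, there is $R > 0$ with $\nu(B_{\R^d}(0,R)) \geq 1 - m_0/2$. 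For any $\pi \in \Pi(g_n\#\mu, \nu)$, the set $A_n := \{(x',y) : \|x'\|_2 \geq \|g_n(x_0)\|_2 - \gradlip r_0,\; \|y\|_2 \leq R\}$ has $\pi$-mass at least $m_0 + (1-m_0/2) - 1 = m_0/2$, and $\|x'-y\|_2 \geq \|g_n(x_0)\|_2 - \gradlip r_0 - R$ on $A_n$. Combining this with $c \geq \alpha + \eta(\|\cdot - \cdot\|_2)$, the non-negativity of $c$ and the monotonicity of $\eta$ yields, for $n$ large,
\begin{equation*}
    \Tc(g_n\#\mu, \nu) \geq \frac{m_0}{2}\bigl(\alpha + \eta(\|g_n(x_0)\|_2 - \gradlip r_0 - R)\bigr) \xrightarrow[n\to\infty]{} +\infty,
\end{equation*}
contradicting the finiteness of the infimum provided by (iii). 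The equi-Lipschitz property then bounds $(g_n)$ uniformly on every compact of $\X$.

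\textbf{Extraction in $G$.} On each compact $\K_m$ of the exhaustion from (ii), $(g_n|_{\K_m})$ is equi-continuous and uniformly bounded, so Arzelà--Ascoli together with a diagonal extraction along the sequence $(\K_m)$ yields a subsequence $(g_{n_k})$ converging uniformly on every $\K_m$ to some $g_{\K_m}$. Assumption (ii) then produces a limit $g \in G$ with $g|_{\K_m} = g_{\K_m}$, automatically $\gradlip$-Lipschitz.

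\textbf{Lower semi-continuity.} By \cref{lemma:plans_and_push_forwads}, each optimal coupling $\pi_k^* \in \Pi_c^*(g_{n_k}\#\mu, \nu)$ lifts to some $\gamma_k \in \Pi(\mu, \nu)$ with $\pi_k^* = (g_{n_k}, \mathrm{Id})\#\gamma_k$. Since $\Pi(\mu, \nu)$ is weakly compact (Prokhorov, as marginals with fixed tight measures form a tight set), we extract $\gamma_k \rightharpoonup \gamma \in \Pi(\mu, \nu)$. A standard argument combining uniform tightness of $(\gamma_k)$ with the locally uniform convergence $g_{n_k} \to g$ yields weak convergence $(g_{n_k}, \mathrm{Id})\#\gamma_k \rightharpoonup (g, \mathrm{Id})\#\gamma$. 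As $c$ is lower semi-continuous and non-negative, Portmanteau's theorem gives
\begin{equation*}
    \Tc(g\#\mu, \nu) \leq \int c \,\dd\bigl((g, \mathrm{Id})\#\gamma\bigr) \leq \liminf_k \int c \,\dd\pi_k^* = \liminf_k \Tc(g_{n_k}\#\mu, \nu) = \inf_{g' \in G}\Tc(g'\#\mu, \nu),
\end{equation*}
so $g$ attains the infimum.
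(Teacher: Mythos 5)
Your proof is correct, and while its overall skeleton (direct method, Arzel\`a--Ascoli, closedness of $G$, lower semi-continuity) matches the paper's, the two technical steps are carried out by genuinely different arguments. For the pointwise bound on $\|g_n(x_0)\|_2$, the paper separates variables via the elementary inequality $\eta(\|w-z\|_2)\geq \eta(\|z\|_2/2)-\eta(\|w\|_2)$ and crucially uses the integrability hypothesis $\int\eta(\|\cdot\|_2)\dd\nu<+\infty$; your mass-concentration argument instead only uses tightness of $\nu$ to find $R$ with $\nu(B(0,R))\geq 1-m_0/2$ and the inclusion-exclusion bound $\pi(A_n)\geq m_0/2$, so it bypasses that integrability assumption entirely --- a mild strengthening of the theorem as stated (the assumption remains harmless, just unused on your route). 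For the final step, the paper shows $g_{\beta(n)}\#\mu\rightharpoonup g\#\mu$ by dominated convergence and then cites the joint lower semi-continuity of $\T_c$ under weak convergence of the marginals; you instead give a self-contained argument by lifting the optimal plans to $\Pi(\mu,\nu)$ via \cref{lemma:plans_and_push_forwads}, extracting a weak limit by Prokhorov, passing to the limit in $(g_{n_k},\mathrm{Id})\#\gamma_k$ using tightness plus locally uniform convergence, and applying the lower semi-continuous Portmanteau inequality. This buys independence from the cited external result at the cost of a slightly longer argument (and note the small bookkeeping point that the further extraction of $(\gamma_k)$ is harmless only because the full sequence $\T_c(g_{n_k}\#\mu,\nu)$ already converges to the infimum). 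Both routes are valid.
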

\begin{proof}
    \step{1}{Defining a minimising sequence.}
    
    We introduce the notation $J(g) := \T_c(g\#\mu, \nu)$ for convenience, and
    $J^*$ the problem value, which is finite by Assumption iii). Consider a
    minimising sequence $(g_n)_{n\in \N} \in G^\N$ such that 
    $$\forall n \in \N,\; J(g_n) \leq J^* + 2^{-n}.$$

    \blue{\step{2}{Bounding $g_n$.}
      
    First, we fix $n\in \N$, and take $a \in \X$ in the support of $\mu$ and
    $r>0$, then set $A := B_{d_\X}(a, r)$ the ball of centre $a$ and radius $r$
    for the distance $d_\X$, so that $\mu(A) > 0$. The transport problem has a
    solution since $c$ is lower semi-continuous (\cite{santambrogio2015optimal}
    Theorem 1.7). We introduce $\pi_n^* \in \Pi(\mu, \nu)$ optimal for the OT
    cost $\T_c(g_n\#\mu, \nu)$. We lower-bound:
    \begin{align*}
        J(g_n) &\geq \int_{A\times\R^d}c(g_n(x), y)\dd\pi_n^*(x,y) \geq 
        \int_{A\times\R^d}\eta(\|g_n(x)-y\|_2)\dd\pi_n^*(x,y) + \alpha\mu(A).
    \end{align*}
    To separate variables, we will use an elementary inequality: let $z, w\in
    \R^d$, the triangle inequality yields $\|z\|_2\leq 2\max(\|w-z\|_2,
    \|w\|_2)$, applying the non-decreasing and non-negative function $\eta$
    provides $\eta(\|z\|_2/2) \leq \max(\eta(\|w-z\|_2), \eta(\|w\|_2)) \leq
    \eta(\|w-z\|_2) + \eta(\|w\|_2)$. Finally, we have
    \begin{equation}\label{eqn:existence_phi_triangle_ineq}
        \forall w,z \in \R^d,\; \eta(\|w-z\|_2)\geq \eta(\|z\|_2/2) 
        - \eta(\|w\|_2).
    \end{equation}
    By assumption, we remind that $\int \eta(\|\cdot\|_2) \dd\nu < +\infty$, and
    resume lower-bounding using \cref{eqn:existence_phi_triangle_ineq} with $w
    := y$ and $z := g_n(x)$:
    $$J(g_n) \geq \int_A
    \eta\left(\frac{\|g_n(x)\|_2}{2}\right)\dd\mu(x) - \mu(A)\int
    \eta(\|\cdot\|_2) \dd\nu + \alpha\mu(A). $$ Let $x \in A$, we apply again
    \cref{eqn:existence_phi_triangle_ineq} with $w := (g_n(a)-g_n(x))/2$ and $z
    := g_n(a)/2$:
    $$\eta(\|g_n(x)\|_2/2) \geq \eta(\|g_n(a)\|_2/4) -
    \eta(\|g_n(a)-g_n(x)\|_2/2) \geq \eta(\|g_n(a)\|_2/4) - \eta(\gradlip r /
    2),$$ where the second inequality comes from the fact that $g_n$ is
    $\gradlip$-Lipschitz, $d_\X(x, a) \leq r$ and that $\eta$ is non-decreasing.
    Gathering our inequalities leads to the following lower-bound:
    $$J^* + 1 \geq J(g_n) \geq \mu(A)\left(\eta(\|g_n(a)\|_2/4) - \eta(\gradlip
    r / 2)-\int \eta(\|\cdot\|_2)\dd\nu + \alpha\right). $$ This implies that
    there exists $M > 0$ independent of $n$ such that $\|g_n(a)\|_2 \leq M$.
    (Since by coercivity of $\eta$, the right-hand side of the equation above
    would tend to $+\infty$ if $\|g_n(a)\|_2 \xrightarrow[n\longrightarrow +
    \infty]{} + \infty$). }

    \blue{\step{3}{Applying Arzelà-Ascoli's Theorem.}

    For $n\in \N$, we use the upper-bound from Step 2 and the fact that each
    $g_n$ is $\gradlip$-Lipschitz:
    $$\forall x \in \X,\; \|g_n(x)\|_2 \leq M + \gradlip d_\X(x, a),$$ which
    shows that $\blue{\forall x \in \X},\; \{g_n(x),\; n\in \N\}$ has compact
    closure in $\R^d$. The sequence $(g_n)$ is equi-Lipschitz and thus
    equi-continuous, and is closed for the compact-open topology
    (\cref{def:stable_uniform_limit}) by assumption. By Arzelà-Ascoli's theorem
    (as stated in \cite{kelley2017general}, Chapter 7, Theorem 17), we can
    choose $\beta : \N \longrightarrow \N$ an extraction such that $g_{\beta(n)}
    \xrightarrow[n\longrightarrow +\infty]{} g$ locally uniformly on $\X$, for a
    certain function $g \in G$. }

    \step{4}{Showing that the limit $g$ is optimal.}

    \blue{First, the sequence $(g_{\beta(n)}\#\mu)$ converges weakly towards the
    probability measure $g\#\mu$: take a continuous and compactly supported test
    function $\phi: \R^d \longrightarrow \R$, the dominated convergence theorem
    shows that 
    $$\int_\X \phi \circ g_{\beta(n)}\dd\mu
    \xrightarrow[n\longrightarrow+\infty]{} \int_\X \phi \circ g\dd\mu,$$ where
    convergence of the integrands is ensured by the point-wise convergence of
    $(g_{\beta(n)})$, and domination by $\|\phi\|_\infty$ suffices.} Since $c$
    is lower semi-continuous, the OT cost is itself lower semi-continuous for
    the weak convergence of measures \blue{(see \cite{ambrosio2021lectures}
    Theorem 2.6)}, we obtain the following inequality:
    $$\underset{n\longrightarrow+\infty}{\liminf}J(g_{\beta(n)}) \geq J(g),
    $$ 
    where $J$ was introduced in Step 1, where we also chose $g_n$ such as 
    $J(g_n) \leq J^* + 2^{-n}$, thus we conclude $J^* \geq J(g)$, hence $g$ 
    is optimal.

\end{proof}

\cref{thm:existence_G_c} can be extended to the case where the
regularity of the functions of $G$ is only assumed \textit{on a
  partition} of $\X$. \blue{Note that to avoid pathological ambiguity and unnecessary
complications, we will consider partitions whose borders have no mass for $\mu$,
such that the problem objective can be split according to the partition.}

\begin{theorem}\label{thm:existence_G_c_parts}
    Let $c: \R^d \times \R^d
    \longrightarrow \R_+$ be a continuous cost function, a probability measure
    $\mu\in \mathcal{P}(\X)$ on a locally compact Polish space $(\X, d_\X)$, and
    $\nu \in \mathcal{P}(\R^d)$. Consider $(E_i)_{\llbracket 1, K \rrbracket}$ 
    a partition of $\X$ such that for every $i \in \llbracket 1, K \rrbracket,\; \mu(\partial E_i) = 0$.
    Under the same conditions as Theorem~\ref{thm:existence_G_c}, and
    replacing assumption ii) by 
    \begin{itemize}
        \item[ii')] The class of functions $G\subset (\R^d)^\X$ is of the form
        $$G = \left\{g: \X \longrightarrow \R^d\ |\ \forall i \in \llbracket 1, K \rrbracket,\; g|_{\interior{E_i}} = g_i,\; g_i\in G_i\right\}, $$
        where for every $i \in \llbracket 1, K \rrbracket$, the set of functions
        $G_i \subset (\R^d)^{\interior{E_i}}$ is a subset of the space of
        $\gradlip$-Lipschitz functions from $\interior {E_i}$ to $\R^d$, that is
        \blue{closed for the compact-open topology} (see
        \cref{def:stable_uniform_limit}),
    \end{itemize}
    then the problem $\underset{g\in G}{\argmin}\ \T_c(g\#\mu, \nu)$ has a
    solution.
\end{theorem}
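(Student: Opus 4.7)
The plan is to reduce the statement to Theorem~\ref{thm:existence_G_c} applied on each piece of the partition, and to glue the resulting limit maps by exploiting $\mu(\partial E_i)=0$. Since $\mu\left(\bigcup_i \partial E_i\right)=0$, the OT cost $J(g):=\T_c(g\#\mu,\nu)$ depends only on the restrictions $g|_{\interior E_i}$, and we can decompose $g\#\mu = \sum_{i=1}^K g|_{\interior E_i}\#(\mu|_{\interior E_i})$ as a sum of (sub-)measures. Fix a minimising sequence $(g_n)\in G^\N$ with $J(g_n)\leq J^*+2^{-n}$, where $J^*<+\infty$ by Assumption~iii).

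For each index $i$ such that $\mu(\interior E_i)>0$, I choose $a_i\in \interior E_i$ and $r_i>0$ with $B_{d_\X}(a_i,r_i)\subset \interior E_i$ and $\mu(B_{d_\X}(a_i,r_i))>0$, which is possible because $\interior E_i$ is open and has strictly positive $\mu$-mass. Setting $A_i := B_{d_\X}(a_i,r_i)$ and $g_{n,i} := g_n|_{\interior E_i}$, the exact same lower-bounding argument as in Step~2 of the proof of Theorem~\ref{thm:existence_G_c}, carried out over $A_i$ and using that $g_{n,i}$ is $\gradlip$-Lipschitz on $\interior E_i$, yields a constant $M_i>0$ independent of $n$ with $\|g_n(a_i)\|_2\leq M_i$.

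Next, each $\interior E_i$ is locally compact Polish as an open subset of such a space, and $(g_{n,i})$ is equi-Lipschitz and bounded at $a_i$, hence locally uniformly bounded on $\interior E_i$. Arzelà--Ascoli furnishes a subsequence converging locally uniformly on $\interior E_i$; by the compact-open closedness of $G_i$, the limit $g_{\infty,i}$ belongs to $G_i$. Since there are only finitely many pieces, a single diagonal extraction $\beta:\N\to\N$ handles all indices simultaneously; for pieces with $\mu(\interior E_i)=0$, I pick any $g_{\infty,i}\in G_i$ (non-empty since $G$ is non-empty by Assumption~iii)). Define $g\in G$ by $g|_{\interior E_i}=g_{\infty,i}$ and extend arbitrarily on the $\mu$-negligible set $\bigcup_i \partial E_i$.

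It then remains to conclude by weak convergence: for any bounded continuous $\phi:\R^d\to\R$,
\begin{equation*}
\int_\X \phi\circ g_{\beta(n)}\,\dd\mu = \sum_{i=1}^K \int_{\interior E_i}\phi\circ g_{\beta(n)}\,\dd\mu \xrightarrow[n\to+\infty]{} \sum_{i=1}^K \int_{\interior E_i}\phi\circ g\,\dd\mu = \int_\X \phi\circ g\,\dd\mu,
\end{equation*}
by dominated convergence on each piece (using point-wise convergence from the local uniform convergence, domination by $\|\phi\|_\infty$, and $\mu(\partial E_i)=0$). Thus $g_{\beta(n)}\#\mu\rightharpoonup g\#\mu$, and lower semi-continuity of $\T_c$ for the weak convergence of measures gives $J(g)\leq \liminf_n J(g_{\beta(n)})=J^*$, so $g$ is optimal. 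The main subtlety is the bookkeeping around the partition (locating $a_i$ in the interior, handling potentially zero-mass pieces, the finite diagonal extraction, and discarding the null-mass boundaries) rather than any deep new idea beyond the one-piece argument.
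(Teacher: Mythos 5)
Your proposal is correct and follows essentially the same route as the paper: run the Step-2 lower bound of Theorem~\ref{thm:existence_G_c} on a small ball $A_i\subset\interior{E_i}$ of positive mass to bound $g_n(a_i)$, apply Arzelà--Ascoli and the compact-open closedness of each $G_i$ piecewise, compose the finitely many extractions, and use $\mu(\partial E_i)=0$ to get $\mu$-a.e.\ pointwise convergence and hence weak convergence of the push-forwards. The only differences are cosmetic bookkeeping (explicitly treating zero-mass pieces and testing against $C_b$ rather than $C_c$), which the paper leaves implicit.
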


\begin{proof}
    We shall follow closely the proof of \cref{thm:existence_G_c}, and point out
    the technical differences. We introduce a minimising sequence exactly
    identically to Step 1. The computations from Step 2 can be done verbatim,
    choosing instead $A_i\subset\interior{E_i}$, and concluding $\|g_n(a_i)\|_2
    \leq M_i$ for a fixed $a_i \in A_i$.

    Step 3 is then done separately on each $\interior{E_i}$, yielding
    extractions $(\beta_i)$ such that each $g_{\beta_i(n)}$ converges locally
    uniformly on $\interior{E_i}$ towards a function $g_i \in G_i$. Considering
    the extraction $\beta := \beta_1 \circ \cdots \circ \beta_K$, we have for
    all $i\in \llbracket 1, K \rrbracket$ the uniform convergence of
    $(g_{\beta(n)})$ towards $g\in G$ on all compact sets of $\interior{E_i}$. 

    Finally, Step 4 is done likewise to \cref{thm:existence_G_c}, with the
    technicality that since $\mu(\partial E_i) = 0$, the pointwise convergence
    of $(g_{\beta(n)})$ towards $g$ at each point of $\interior{E_i}$ suffices
    to show that $g_{\beta(n)}(x) \xrightarrow[n\longrightarrow+\infty]{} g(x)$
    for $\mu$-almost-every $x\in \X$, which yields the convergence in law
    $g_{\beta(n)}\#\mu \xrightarrow[n\longrightarrow+\infty]{w}g\#\mu$. The rest
    follows verbatim. 
\end{proof}

In \cref{remark:generalise_regularisation,remark:generalise_target_space}, we
present some natural extensions of
\cref{thm:existence_G_c,thm:existence_G_c_parts}, which we kept separate for
legibility.

\begin{remark}\label{remark:generalise_regularisation} The existence results of
    \cref{thm:existence_G_c,thm:existence_G_c_parts} also hold if the objective
    functional is changed into a regularised version
    $$J(g) = \T_c(g\#\mu, \nu) + R(g), $$ where $R: G \longrightarrow
    \R_+\cup\{+\infty\}$ is lower semi-continuous with respect to uniform local
    convergence. One also has to assume that there still exists $g\in G$ such
    that the new cost $J$ is finite. The proofs can be written almost
    identically: in Step 1, it suffices to lower-bound $R(g_n) \geq 0$, and in
    Step 4, one obtains $\liminf\ J(g_{\beta(n)}) \geq J(g)$ thanks to
    the lower semi-continuity of $R$.
\end{remark}

\begin{remark}\label{remark:generalise_target_space} Condition i) on $c$ can be
	generalised to the case where the target space $\R^d$ is instead a Polish
	space $\Y$ verifying the Heine-Borel property (i.e. all closed and bounded
	sets are compact), in which case Condition i) can be replaced with the
	condition that $c(\cdot, y_0)$ be \textbf{proper}, which is to say that its
	preimage by any compact set $S \subset \R_+$ is a compact set of $\Y$. This
	property would be used in Step 2 to show that $g_n(a)\in C$ for some compact
	set $C\subset\Y$ independent of $n$, then in Step 3, we would use the
	Lipschitz property of $g_n$ and the triangle inequality on $d_\Y$ to show
	that $\forall x\in \K,\; g_n(x) \in \oll{B}_\Y(y_0, \gradlip r + d_\Y(y_0,
	C))$, for a compact set $\K\subset \X$ of diameter $r$ and $y_0\in \Y$. This
	would show that for each $x\in \K$, the set $\{g_n(x)\}_{n\in \N}$ is
	pre-compact in $\Y$, and allow one to apply Arzelà-Ascoli likewise.
\end{remark}

A natural context for Optimal Transport is the case where the ground cost is of
the from $c(x,y) = \|x-y\|^p$ for some norm $\|\cdot\|$ on $\R^d$ and $p\geq 1.$
In \cref{prop:costs_verifying_assumptions}, we show that such costs verify the
assumptions to our existence theorems.

\begin{prop}\label{prop:costs_verifying_assumptions} Cost functions of the form
    $c(x,y) := \|x-y\|^p$, where \blue{$p> 0$} and $\|\cdot\|$ is a norm on
    $\R^d$ satisfy Assumption i) of
    \cref{thm:existence_G_c,thm:existence_G_c_parts}, as long as $\nu \in
    \mathcal{P}_{p}(\R^d)$.
\end{prop}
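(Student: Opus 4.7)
The plan is to exploit the equivalence of all norms on $\R^d$ in order to reduce the cost $c(x,y) = \|x-y\|^p$ to a power of the Euclidean norm, and then to take $\eta$ to be precisely this power function.

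First, I would invoke the standard equivalence of norms on the finite-dimensional space $\R^d$: there exists a constant $A > 0$ such that $\|z\| \geq A \|z\|_2$ for every $z \in \R^d$. Raising to the $p$-th power (permissible since both sides are nonnegative and $p > 0$) gives
\begin{equation*}
    c(x,y) = \|x - y\|^p \geq A^p \|x-y\|_2^p.
\end{equation*}

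Then I would set $\eta(t) := A^p t^p$ and $\alpha := 0$. Checking the hypotheses: since $p > 0$, the map $t \mapsto t^p$ is non-decreasing on $\R_+$ and tends to $+\infty$ as $t \to +\infty$, so $\eta$ has the required properties. The pointwise lower bound $c(y,y') \geq \alpha + \eta(\|y-y'\|_2)$ is immediate from the displayed inequality.

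It remains to verify the integrability condition. By the assumption $\nu \in \mathcal{P}_p(\R^d)$, the measure $\nu$ has finite $p$-th moment (this is a well-defined notion independent of the choice of equivalent norm used to define $\mathcal{P}_p$), hence
\begin{equation*}
    \int_{\R^d} \eta(\|y\|_2) \, \dd\nu(y) = A^p \int_{\R^d} \|y\|_2^p \, \dd\nu(y) < +\infty.
\end{equation*}
This concludes the verification of Assumption i). There is no real obstacle here: the statement is essentially a sanity check that the main existence theorems cover the usual optimal transport costs, and the argument is a one-line consequence of norm equivalence together with the definition of $\mathcal{P}_p$.
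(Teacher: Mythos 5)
Your proof is correct and follows exactly the paper's argument: the paper likewise takes $\eta(t) := (Kt)^p$ with $K$ from the norm equivalence $\|\cdot\| \geq K\|\cdot\|_2$, and your write-up simply spells out the routine verifications. Nothing to add.
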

\begin{proof}
    \blue{Take $\eta := t \longmapsto (Kt)^p$, where $K>0$ is provided by the
    norm equivalence inequality $\|\cdot\|\geq K\|\cdot\|_2$.}
\end{proof}

\subsection{Function Class Example: Gradients of Convex
Functions}\label{sec:class_grad_convex}

An interesting class of functions $G$ to optimise over is the set of
$\gradlip$-Lipschitz functions that are gradients of ($\strcvx$-strongly) convex
functions. Indeed, this can be seen as a regularising assumption, and was
studied in \cite{paty2020regularity} for the cost $c(x,y) = \|x-y\|_2^2$. We
shall see in \cref{prop:grad_cvx_verify_assumptions} that classes of such
functions on \textit{arc-connected} partitions verify the conditions of our
existence result \cref{thm:existence_G_c_parts}. In particular,
\cite{paty2020regularity} Definition 1 (which states existence, with a
simplified proof due to lack of space) is a consequence of
\cref{thm:existence_G_c_parts}. Before this result, we will present a technical
lemma on arc-connectedness. \blue{In this paper, we will say that a set
$A\subset \R^d$ is \textit{arc-connected} if any pair of points of $A$ can be
joined by a Lipschitz curve contained in $A$.}

\begin{lemma}\label{lemma:union_arc_connected} Let $\O$ be an arc-connected open
	set of $\R^d$. There exists $(C_k)_{k\in \N}$ a sequence of arc-connected
	compact sets such that $\forall k \in \N,\; C_k \subset C_{k+1}$ and
	$\Reu{k\in \N}{}C_k = \O$.
\end{lemma}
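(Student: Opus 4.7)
The plan is to combine a classical compact exhaustion of $\O$ with Lipschitz paths anchored at a fixed base point in order to force arc-connectedness. Fix $x_0 \in \O$ once and for all. As a starting point, introduce the standard exhaustion $K_n := \{x \in \O : d(x, \R^d\setminus\O)\geq 1/n\}\cap \overline{B(0, n)}$, which gives compact sets with $K_n \subset K_{n+1}$ and $\bigcup_n K_n = \O$, but which need not be arc-connected (they can even have infinitely many connected components a priori; but each $K_n$ is compact inside $\O$).

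The key step is to enlarge each $K_n$ into an arc-connected compact set inside $\O$. Since $\O$ is open and $K_n$ is compact, a finite subcover argument provides finitely many closed balls $\overline{B(y^n_1, r^n_1)}, \ldots, \overline{B(y^n_{m_n}, r^n_{m_n})}\subset \O$ whose union contains $K_n$. By the arc-connectedness assumption on $\O$, for each $i$ there exists a Lipschitz curve $\gamma^n_i : [0,1]\longrightarrow \O$ from $x_0$ to $y^n_i$; its image $\Gamma^n_i := \gamma^n_i([0,1])$ is a compact subset of $\O$. Set
$$C_n' := \bigcup_{i=1}^{m_n} \overline{B(y^n_i, r^n_i)} \cup \bigcup_{i=1}^{m_n} \Gamma^n_i.$$
This $C_n'$ is a finite union of compacts, hence compact, it lies in $\O$, contains $K_n$, and is arc-connected: each closed ball is convex (hence joinable to $y^n_i$ by a segment, which is Lipschitz), each $\Gamma^n_i$ contains both $x_0$ and $y^n_i$, and concatenating and reparametrising Lipschitz curves yields a Lipschitz curve, so any two points in $C_n'$ can be joined through $x_0$ by a Lipschitz curve contained in $C_n'$.

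Finally, to enforce the nesting $C_k \subset C_{k+1}$, define $C_k := \bigcup_{n=1}^k C_n'$. This is still a finite union of compact arc-connected sets that all share the point $x_0$, so it remains compact, arc-connected (again via concatenation of Lipschitz curves through $x_0$), and contained in $\O$. Since $K_k \subset C_k' \subset C_k \subset \O$, monotonicity together with $\bigcup_k K_k = \O$ gives $\bigcup_k C_k = \O$, which concludes.

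The only non-routine point is ensuring \emph{arc}-connectedness, not merely connectedness, in the sense of the paper's Lipschitz-curve definition; this is precisely what forces the explicit use of Lipschitz paths from $x_0$ and the concatenation argument. The rest (compactness, monotonicity, exhaustion) is a standard bookkeeping exercise on open subsets of $\R^d$.
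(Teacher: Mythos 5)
Your proof is correct. It rests on the same core mechanism as the paper's: cover $\O$ by closed balls contained in $\O$, join them with Lipschitz curves (which exist by the paper's definition of arc-connectedness), and take cumulative unions to get nesting. The difference is in the bookkeeping. The paper enumerates \emph{all} closed balls $\oll{B}(q, r_q)\subset\O$ with rational centres $q\in\O\cap\Q^d$ (a countable family whose union is already $\O$) and chains \emph{consecutive} centres $q_k, q_{k+1}$ by Lipschitz curves, building $C_{k+1} := C_k\cup A_{k+1}\cup w_k([0,1])$ recursively; exhaustion then follows from $\cup_k A_k=\O$. You instead invoke the standard compact exhaustion $K_n$ of an open set, cover each $K_n$ by finitely many closed balls via compactness, and connect every ball centre to a single fixed basepoint $x_0$ (a star rather than a chain). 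Your route requires the extra ingredient of the exhaustion lemma but makes the covering property $K_k\subset C_k$ completely explicit; the paper's route avoids the exhaustion at the cost of relying on the density of $\Q^d$ to see that the rational balls fill $\O$. Both are fine; the only point worth being careful about in yours is the degenerate case where some $K_n$ is empty (then $C_n'=\varnothing$), which is harmless but could be sidestepped entirely by always adjoining $\{x_0\}$ to each $C_n'$.
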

\begin{proof}
    \blue{Consider the collection $(\oll{B}(q, r_q))_{q \in \O\cap\Q^d}$ where
    for each $q\in \O\cap\Q^d$, we take $r_q > 0$ such that $\oll{B}(q, r_q)
    \subset \O$. Using a bijection between $\N$ and $\O\cap\Q^d$, we can
    introduce sequences $(q_k) \in (\O\cap\Q^d)^\N$ and $(r_k) \in (0,
    +\infty)^\N$ such that the sequence of the $A_k := \oll{B}(q_k, r_k)$
    enumerates the previous collection. The sequence $(A_k)$ is made of compact
    arc-connected sets and verifies $\O = \cup_kA_k$. We can now defined
    recursively the sequence $(C_k)$ by $C_0 := A_0$ and $C_{k+1} := C_k \cup
    A_{k+1} \cup w_k([0, 1])$, where for $k\in \N$, $w_k: [0, 1] \longrightarrow
    \O$ is a Lipschitz curve between $q_k$ and $q_{k+1}$ contained in $\O$
    (which exists by assumption on $\O$). By induction, the sequence $(C_k)$
    verifies the desired properties.}
\end{proof}

\blue{We now have the technical tools to prove that $\gradlip$-Lipschitz
functions that are gradients of ($\strcvx$-strongly) convex functions verifies
the local convergence stability assumption of \cref{thm:existence_G_c_parts} on
partitions of $\R^d$.}

\begin{prop}\label{prop:grad_cvx_verify_assumptions} Consider $\X := \R^d$, and
    a partition $\mathcal{E} := (E_i)_{\llbracket 1, K \rrbracket}$, where each
    $\interior{E_i}$ is arc-connected. Let $0\leq \strcvx\leq \gradlip$, the set
    of functions
    $$\Funs := \left\{g: \R^d \longrightarrow \R^d\ |\ \forall i \in \llbracket
    1, K \rrbracket,\; g|_{\interior{E_i}}\ \gradlip-\text{Lipschitz};\;
    g|_{\interior{E_i}} = \nabla \pot_i,\; \pot_i \in
    \mathcal{C}^1(\interior{E_i}, \R),\; \pot_i\ \strcvx-\text{strongly\ convex}
    \right\} $$ verifies Assumption \blue{ii')} of
    \cref{thm:existence_G_c_parts}.
\end{prop}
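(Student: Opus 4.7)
The plan is to verify, for each $i\in \llbracket 1, K \rrbracket$, that the class $G_i$ of $\gradlip$-Lipschitz gradients of $\strcvx$-strongly convex $\mathcal{C}^1$ potentials on $\interior{E_i}$ is closed for the compact-open topology in the sense of \cref{def:stable_uniform_limit}. First I would invoke \cref{lemma:union_arc_connected} to obtain a compact arc-connected exhaustion $(\K_m)_{m\in\N}$ of $\interior{E_i}$, so that the closedness criterion reduces to: given $(g_n)\in G_i^\N$ with $g_n|_{\K_m}$ converging uniformly to some $g_{\K_m}$ for every $m$, the glued limit $g:\interior{E_i}\longrightarrow\R^d$ again lies in $G_i$. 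Preservation of the $\gradlip$-Lipschitz property under uniform limits is immediate (and transfers from compact sets to the whole of $\interior{E_i}$ since any two points are contained in some $\K_m$), so the real content is to exhibit a $\strcvx$-strongly convex potential $\pot\in\mathcal{C}^1(\interior{E_i},\R)$ with $\nabla\pot=g$.

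For this, I would write $g_n=\nabla\pot_n$, normalise $\pot_n(x_0)=0$ at a fixed basepoint $x_0\in\interior{E_i}$, and recover the candidate limit potential by line integrals. Given $x\in\interior{E_i}$, arc-connectedness supplies a Lipschitz curve $\gamma:[0,1]\longrightarrow\interior{E_i}$ from $x_0$ to $x$; its image is compact and hence contained in some $\K_m$, ensuring uniform convergence of $g_n$ along $\gamma$. The fundamental theorem of calculus then gives $\pot_n(x) = \int_0^1\langle g_n(\gamma(t)),\gamma'(t)\rangle\,\dd t$, whose right-hand side converges to $\int_0^1\langle g(\gamma(t)),\gamma'(t)\rangle\,\dd t$. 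Path-independence of this limit follows from path-independence of each $\pot_n(x)$, so $\pot(x)$ is unambiguously defined.

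Then I would verify that $\pot$ is $\mathcal{C}^1$ with $\nabla\pot=g$: for $x\in\interior{E_i}$ and $h$ small enough that $[x,x+h]\subset\interior{E_i}$, using the straight-line path yields $\pot(x+h)-\pot(x)=\int_0^1\langle g(x+th),h\rangle\,\dd t$, and continuity of $g$ (as a uniform limit of continuous functions) gives differentiability with derivative $g(x)$. Finally, $\strcvx$-strong convexity passes to the limit since each $\pot_n-\tfrac{\strcvx}{2}\|\cdot\|_2^2$ is convex and pointwise limits of convex functions are convex, so $\pot-\tfrac{\strcvx}{2}\|\cdot\|_2^2$ is convex as well.

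The main obstacle I anticipate is making the reconstruction of $\pot$ rigorous on a potentially irregular open set $\interior{E_i}$; this is precisely where \cref{lemma:union_arc_connected} does the work, by producing an arc-connected compact exhaustion that controls the line integrals uniformly and guarantees the image of every connecting path sits inside some $\K_m$. Once this structural fact is in hand, every remaining step is a standard convergence argument for convex functions and their gradients.
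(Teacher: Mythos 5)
Your proposal follows essentially the same route as the paper: reduce to a single part $\interior{E_i}$, use \cref{lemma:union_arc_connected} to produce an arc-connected compact exhaustion, normalise the potentials at a basepoint, recover the limit potential by line integrals along Lipschitz curves, check that it is $\mathcal{C}^1$ with gradient equal to the limit map, and pass $\strcvx$-strong convexity to the limit pointwise. The one step you should tighten is the claim that the image of an arbitrary connecting curve $\gamma \subset \interior{E_i}$, being compact, ``is contained in some $\K_m$'': \cref{lemma:union_arc_connected} only provides an increasing sequence of arc-connected compacts whose union is $\interior{E_i}$, not an exhaustion with $\K_m \subset \interior{\K}_{m+1}$, so absorption of compact sets is not automatic. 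The paper avoids this entirely by exploiting the arc-connectedness of $\K_m$ itself: for $x \in \K_m$ it chooses the Lipschitz curve from $a$ to $x$ \emph{inside} $\K_m$, where uniform convergence of $(g_n)$ is given by hypothesis. Either adopt that choice of path, or upgrade the exhaustion so that every compact subset of $\interior{E_i}$ lies in some $\K_m$; with that repair, the rest of your argument (path-independence inherited from the $\pot_n$, the straight-line computation of the derivative, and convexity of $\pot - \tfrac{\strcvx}{2}\|\cdot\|_2^2$ as a pointwise limit) is sound and matches the paper's proof in substance.
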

\begin{proof}
    Let $i \in \llbracket 1, K \rrbracket$, and define for notational
    convenience $\U:=\interior{E_i}$. We want to show that the set of functions
    $$G := \left\{g: \U \longrightarrow \R^d\ \gradlip-\text{Lipschitz}\ |\ g =
    \nabla\pot,\; \pot \in \mathcal{C}^1(\U, \R),\; \pot\
    \strcvx-\text{strongly\ convex}\right\} $$ is \blue{closed for the
    compact-open topology} (\cref{def:stable_uniform_limit}). By
    \cref{lemma:union_arc_connected}, since $\U$ is open and arc-connected, we
    can choose an increasing sequence of arc-connected compact sets $\K_m
    \subset \U$ such that $\cup_m\K_m = \U$. We fix $a\in \K_0$.

    Take a sequence $(g_n)_{n\in \N} \in G^\N$ such that for each $m\in \N$,
    $g_n|_{\K_m}$ converges uniformly to some function $h_m \in
    \mathcal{C}^0(\K_m, \R^d)$. We will show that there exists $g\in G$ that
    coincides with $h_m$ on each $\K_m$. Regarding the Lipschitz constraint, by
    point-wise convergence, each function $h_m$ is $\gradlip$-Lipschitz. 
    
    For any $n\in \N$, since $g_n\in G$, we can introduce an $\strcvx$-strongly
    convex function $\pot_n \in \mathcal{C}^1(\U, \R)$ such that $g_n = \nabla
    \pot_n$. Since $\pot_n$ can be chosen up to an additive constant, we can
    assume $\pot_n(a)=0$. We study the point-wise convergence of $(\pot_n)$ on
    $\K_m$ for $m\in \N$ fixed, so we fix $x\in \K_m$. Since $\K_m$ is
    arc-connected, we can choose $w: [0, 1] \longrightarrow \K_m$ \blue{a
    Lipschitz curve} such that $w(0)=a$ and $w(1)=x$. Noticing that \blue{for
    almost-every $t\in [0, 1],$} $\frac{\dd}{\dd t} \pot_n (w(t))=\langle \nabla
    \pot_n(w(t)), \dot w(t) \rangle$ and using $\pot_n(a) = 0$, we write
    \blue{(by absolute continuity of $\pot_n \circ w$)}:
	$$\pot_n(x) = \Int{0}{1}\langle\nabla \pot_n(w(t)), \dot w(t)\rangle \dd t
	\xrightarrow[n \longrightarrow +\infty]{} \Int{0}{1}\langle h_m(w(t)), \dot
	w(t)\rangle \dd t =: \psi_m(x),$$ where the convergence is obtained \blue{by
	the dominated convergence theorem.}

    Our objective is now to prove that $\psi_m$ is $\mathcal{C}^1$-smooth on
	$\interior{\K}_m$, and that $\nabla \psi_m = h_m$. Let $x \in
	\interior{\K}_m$, $v \in \R^d$ and $\delta > 0$ such that $\forall t \in
	[-\delta, \delta],\; x+tv \in \interior{\K}_m$. For $n \in \N$ and $t \in
	[-\delta, \delta]$, let $f_n(t) := \pot_n(x+tv)$. We have shown that the
	sequence $(f_n)$ converges pointwise to $f :=t \longmapsto \psi_m(x+tv)$.
	Furthermore, by convergence of $(g_n)$, the derivative sequence $f_n' = t
	\longmapsto \langle \nabla \pot_n(x+tv), v \rangle$ converges uniformly on
	$[-\delta, \delta]$ to $t \longmapsto \langle h_m(x+tv), v \rangle$. A
	standard calculus theorem then shows that $f$ is differentiable on
	$(-\delta, \delta)$, with $f'(t) = \frac{\dd} {\dd t} \langle h_m(x+tv), v
	\rangle$. In particular, by setting $t=0$ we have shown that the directional
	derivative $D_v\psi_m(x)$ exists and has the value $\langle h_m(x), v
	\rangle$. Since $h_m$ is continuous (we saw that it is Lipschitz), this
	shows that $\psi_m$ is of class $\mathcal{C}^1$, with $\nabla \psi_m = g_m$
	on $\interior{\K}_m$.

    For $x \in \U$, letting $m := \min\lbrace m \in \N : x \in \K_m\rbrace$, we
	define $\psi(x) := \psi_m(x)$, which is well-defined since $x \in
	\mathcal{K}_m$. For $m < m'$, since $\K_m \subset \K_{m'}$, we have
	$\psi_{m'}|_{\K_m} = \psi_m$, as a consequence, for any $m\in \N,\;
	\psi|_{\K_m} = \psi_m$ without ambiguity. The previous result implies in
	particular that $\psi$ is of class $\mathcal{C}^1$ on each
	$\interior{\K}_m$, and thus everywhere on $\U$. We define $g: \U
	\longrightarrow \R^d$ similarly, with the same property $g|_{\K_m} = h_m$.
	With this construction, on each $\interior{\K}_m$, one has $g = g_m = \nabla
	\psi_m = \nabla \psi$. As a result, we have $g = \nabla \psi$ on all of
	$\U$. Since each $g_m$ is $\gradlip$-Lipschitz, it follows that $g$ is
	$\gradlip$-Lipschitz on $\U$.

    To see that $g\in G$, it only remains to show that $\psi$ is
    $\strcvx$-strongly convex, which is a consequence of the fact that it
    is everywhere a point-wise limit of a $\psi_m$, which is itself
    $\strcvx$-strongly convex.
\end{proof}

\subsection{Function Class Example: Neural Networks}

Another natural idea is to consider classes $G$ of parametrised functions, in
particular Neural Networks (NNs) with Lipschitz activation functions. We will
consider a relatively general expression for NNs borrowed from
\cite{tanguy2023convergence}. We consider a class $\GNN$ of functions $g_\theta
= h_N(\theta, \cdot): \R^k \longrightarrow \R^d$ for a parameter vector
$\theta\in \Theta$, where $\Theta \subset \R^p$ is a compact set, and where
$h_N$ is the $N$-th layer of a recursive NN structure defined by
\begin{equation}\label{eqn:def_NN}
	\begin{split} 
		&h_0(\theta, x) = x, \quad \forall n \in \llbracket 1,
		N \rrbracket,\; h_n =
		\app{\R^p\times\R^k}{\R^{d_n}}{(\theta,x)}{a_n\left(\Sum{i=0}{n-1}
		A_{n,i}(\theta)h_i(\theta,x) + b_n \theta \right)}, \\
		&N \in \N,\; d_0 = k,\; d_N=d,\; \forall n \in \llbracket 1, N \rrbracket,\; d_n \in \N^*,\\
		&a_n: \R^{d_n} \longrightarrow \R^{d_n}\ \text{Lipschitz},\; b_n \in \mathcal{L}(\R^p, \R^{d_n}),\; \forall i \in \llbracket 0, n-1 \rrbracket,\; A_{n,i} \in \mathcal{L}(\R^p, \R^{d_n\times d_i}),
	\end{split}
\end{equation}
where $\mathcal{L}(A, B)$ is the space of linear maps from $A$ to $B$. The terms
$A_{n,i}$ and $b_n$ correspond to the weights matrices and biases respectively,
and we allow the use of the entire parameter vector $\theta\in
\Theta\subset\R^p$ at each layer for generality. The summation over the previous
layers allows the inclusion of ``skip-connections" in the architecture. Thanks
to the assumption that the parameters lie in a compact set, we will show that
the class $\GNN$ verifies the conditions of our existence theorem
\cref{thm:existence_G_c}.

\begin{prop}\label{prop:GNN_verify_assumptions} Let $\Theta\subset \R^p$ be a
	compact set and $\GNN$ the class of functions $\R^p \longrightarrow \R^d$ of
	the form $g_\theta = h_N(\theta, \cdot)$, with $\theta\in\Theta$ and $h_N$
	as in \cref{eqn:def_NN}. Then $\GNN$ verifies Assumption ii) of
	\cref{thm:existence_G_c}.
\end{prop}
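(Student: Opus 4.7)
The plan is to verify the two parts of Assumption ii): a uniform Lipschitz constant on $\GNN$, and closedness of $\GNN$ for the compact-open topology. Both will follow from a single underlying fact: since $\Theta$ is compact in $\R^p$ and each layer map is built from continuous operators in $\theta$, any quantity of the form $\sup_{\theta \in \Theta}\|A_{n,i}(\theta)\|_{\mathrm{op}}$ or $\sup_{\theta \in \Theta}\|b_n\theta\|$ is finite, and $\theta \mapsto h_n(\theta, x)$ is continuous for each $x$.

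For the uniform Lipschitz bound, I would proceed by induction on $n$. The base case $h_0(\theta, x) = x$ is $1$-Lipschitz in $x$. For the induction step, assume each $h_i(\theta, \cdot)$ is $L_i$-Lipschitz with $L_i$ independent of $\theta \in \Theta$. Writing
\[ h_n(\theta, x) - h_n(\theta, x') = a_n\Bigl(\sum_{i=0}^{n-1}A_{n,i}(\theta)h_i(\theta, x)+b_n\theta\Bigr) - a_n\Bigl(\sum_{i=0}^{n-1}A_{n,i}(\theta)h_i(\theta, x')+b_n\theta\Bigr), \]
and using that $a_n$ is $\mathrm{Lip}(a_n)$-Lipschitz together with the bound $\|A_{n,i}(\theta)\|_{\mathrm{op}} \leq M_{n,i} := \sup_{\theta \in \Theta}\|A_{n,i}(\theta)\|_{\mathrm{op}} < +\infty$, I get
\[ \|h_n(\theta,x)-h_n(\theta,x')\|_2 \leq \mathrm{Lip}(a_n)\sum_{i=0}^{n-1}M_{n,i} L_i\|x-x'\|_2, \]
so $L_n := \mathrm{Lip}(a_n)\sum_{i=0}^{n-1}M_{n,i} L_i$ works. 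Taking $\gradlip := L_N$ provides a uniform Lipschitz constant.

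For closedness, I would take a sequence $(g_{\theta_n})_{n\in\N}\in \GNN^\N$ and a sequence $(\K_m)$ of compact sets exhausting $\R^k$ (for instance closed balls of radius $m$) such that $g_{\theta_n}|_{\K_m}$ converges uniformly to some $g_{\K_m}: \K_m \longrightarrow \R^d$ for each $m$. In particular $(g_{\theta_n})$ converges pointwise to a function $g: \R^k \longrightarrow \R^d$ that restricts to each $g_{\K_m}$. Since $\Theta$ is compact, extract a subsequence $\theta_{n_k} \to \theta^* \in \Theta$. A straightforward induction on $n$ shows that $\theta \mapsto h_n(\theta, x)$ is continuous for every fixed $x$ (each layer being a composition of the continuous $a_n$ with a sum of continuous operations in $\theta$), so $g_{\theta_{n_k}}(x) = h_N(\theta_{n_k}, x) \longrightarrow h_N(\theta^*, x) = g_{\theta^*}(x)$ for every $x \in \R^k$. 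By uniqueness of the pointwise limit, $g = g_{\theta^*} \in \GNN$, which is exactly the closedness condition of \cref{def:stable_uniform_limit}.

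The main obstacle is essentially bookkeeping: carefully verifying that the skip-connection summation does not break either the induction for the Lipschitz bound or the joint continuity in $(\theta, x)$ used in the closedness argument. Once one is careful about writing each $A_{n,i}(\theta)$ and $b_n\theta$ as continuous linear expressions in $\theta$, both properties reduce to standard facts about continuous functions on the compact set $\Theta$.
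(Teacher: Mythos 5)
Your proposal is correct and follows essentially the same route as the paper: an induction over the layers (using compactness of $\Theta$ and linearity of $\theta \mapsto A_{n,i}(\theta)$, $\theta \mapsto b_n\theta$) to get a uniform Lipschitz constant, and then, for closedness, extraction of a convergent subsequence of parameters combined with continuity of $\theta \mapsto h_N(\theta, x)$ and uniqueness of the pointwise limit. The only difference is that you spell out the Lipschitz recursion $L_n = \mathrm{Lip}(a_n)\sum_i M_{n,i}L_i$ explicitly, which the paper leaves as an "immediate induction".
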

\begin{proof}
	An immediate induction over the layers shows that for $g_\theta\in \GNN$,
	there exists a constant $L>0$ independent of $\theta$ such that $g_\theta$
	is $L$-Lipschitz on $\R^k$. \\
	Concerning \blue{closedness for the compact-open topology}
	(\cref{def:stable_uniform_limit}), we will show the following stronger
	property: if $(g_m)\in (\GNN)^\N$ converges pointwise towards a function
	$f:\R^k \longrightarrow \R^d$, then there exists $\theta\in \Theta$ such
	that $f = g_\theta$. For $m\in \N$, we can write $g_m = g_{u_m}$ for $u_m
	\in \Theta$. Since the sequence $(u_m)$ lies in the compact set $\Theta$,
	there exists a converging subsequence $(u_{\alpha(m)})$ which converges
	towards $\theta\in \Theta$. Let $x\in \R^k$, we have the convergence
	$g_{u_m}(x) \longrightarrow f(x)$. By induction over the layers, the
	function $v\longmapsto g_v(x)$ is continuous, thus $g_{u_{\alpha(m)}}(x)
	\longrightarrow g_{\theta}(x)$. By uniqueness of the limit,
	$f(x)=g_\theta(x)$, and since $x\in \R^k$ was chosen arbitrarily, we
	conclude $f \in G$.
\end{proof}

\begin{remark}
	For simplicity, we presented NNs taking $x\in\R^k$ as input, yet the theory
	holds if $\X$ is a locally compact Polish space, just as in
	\cref{thm:existence_G_c}. For instance, one could take a Riemannian
	manifold.
\end{remark}

\subsection{On the Necessity of the Lipschitz Constraint for
Existence}\label{sec:no_lip_no_existence}

Beyond the theoretical usefulness of the constraint that $g$ be
$\gradlip$-Lipschitz, this constraint may add substantial difficulty to the
numerical implementation (see \cref{sec:discrete}). As a result, one could
consider the map problem \cref{eqn:CATM} without the Lipschitz assumption on
$G$. Unfortunately, this variant has no solution in general. We illustrate this
in the light of the class of functions $\Funs$ introduced in
\cref{prop:grad_cvx_verify_assumptions}, in the 1D case and consider $G$ the
cone of continuous non-decreasing functions, yielding the problem:
\begin{equation}\label{eqn:SSNB_only_convex}
	\underset{g \in \mathcal{C}^0(\R),\ \text{non-decreasing}}{\argmin}\ \W_2^2(g\#\mu, \nu),
\end{equation}
where we choose the specific measures $\mu := \mathcal{U}([-1, 1])$ and $\nu :=
\frac{1}{2}\mathcal{U}([-2, -1]) + \frac{1}{2}\mathcal{U}([1, 2])$. In this
setting, no continuous function $g: \R \longrightarrow \R$ can satisfy $g\#\mu =
\nu$. Indeed, suppose that such a continuous function $g$ were to exist. On the
one hand, since $g$ is continuous, $\supp(g\#\mu) = g(\supp(\mu)) = g([-1, 1])$.
On the other hand, by assumption $\supp(g\#\mu) = \supp(\nu) = [-2, -1]\cup [1,
2]$. However, since $g$ is continuous and $[-1, 1]$ is connected, $g([-1, 1])$
is also connected, thus $[-2, -1]\cup [1, 2]$ is connected, which is a
contradiction.

We now consider a specific function $g$ which satisfies $g\#\mu = \nu$:
\begin{equation}\label{eqn:CE_existence_g}
    g:= \app{\R}{\R}{x}{\left\lbrace\begin{array}{c} x-1\ \text{if}\ x<0 \\
		0\ \text{if}\ x=0 \\
		x+1\ \text{if}\ x>0 \\
	\end{array} \right.},
\end{equation}
note that the value at 0 can be chosen arbitrarily. This function is not
continuous, so we approach it by functions $g_\varepsilon$, with $\varepsilon
\in (0,1)$, which are continuous and non-decreasing:
\begin{equation}\label{eqn:CE_existence_g_eps}
    g_\varepsilon:= \app{\R}{\R}{x}{\left\lbrace\begin{array}{c} x-1\ \text{if}\
		x\leq-\varepsilon \\
		\frac{1+\varepsilon}{\varepsilon}x\ \text{if}\ x\in [-\varepsilon,
		\varepsilon] \\
		x+1\ \text{if}\ x\geq\varepsilon \\
	\end{array} \right.}.
\end{equation}    
A straightforward computation yields:
\begin{equation}\label{eqn:CE_existence_g_eps_img}
    g_\varepsilon\#\mu = \cfrac{1-\varepsilon}{2}\mathcal{U}([-2, -1-\varepsilon])
+ \varepsilon\mathcal{U}([-1-\varepsilon, 1 + \varepsilon]) +
\cfrac{1-\varepsilon}{2}\mathcal{U}([1+\varepsilon, 2]),
\end{equation}
which we illustrate in \cref{fig:CE_existence}.
\begin{figure}[ht]
    \centering
    \begin{subfigure}[b]{0.45\textwidth}
        \centering
        \includegraphics[width=\textwidth]{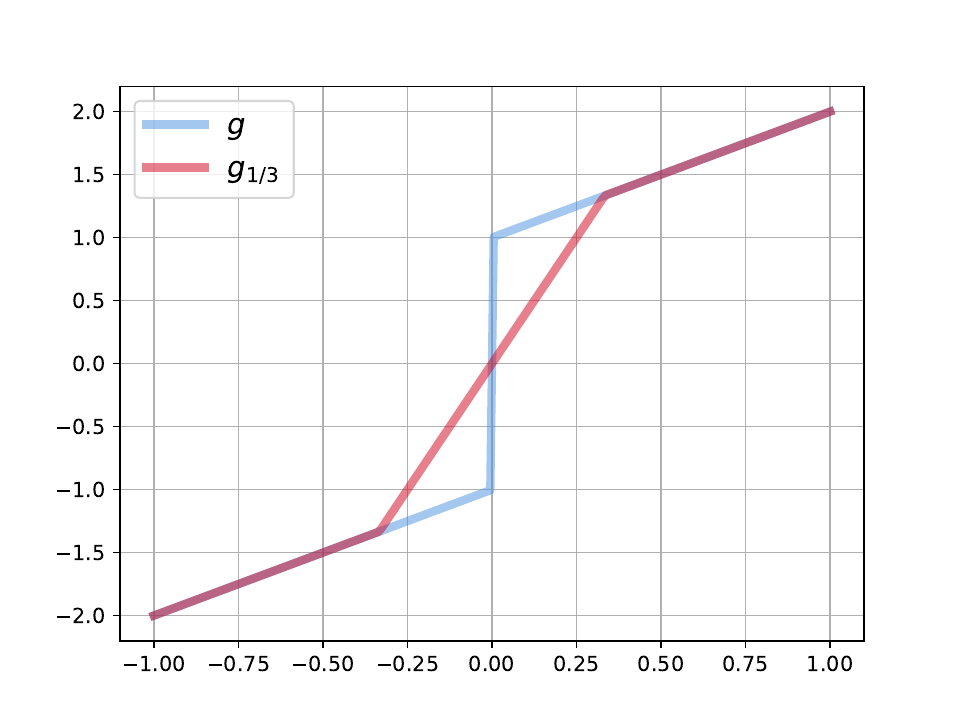}
        \caption{Illustration of the maps $g$ from \cref{eqn:CE_existence_g} and
        $g_\varepsilon$ from \cref{eqn:CE_existence_g_eps} with
        $\varepsilon=1/3$.}
    \end{subfigure}
    \hfill
    \begin{subfigure}[b]{0.45\textwidth}
        \centering
        \includegraphics[width=\textwidth]{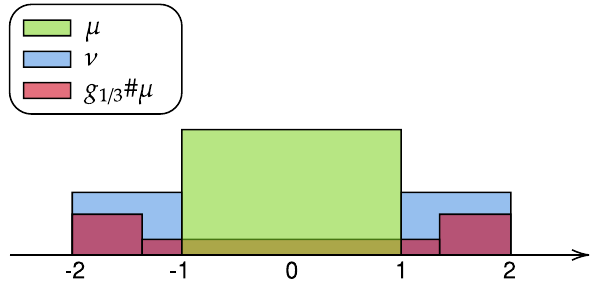}
        \caption{Illustration of the image measure $g_{1/3}\#\mu$ with $\mu =
        \mathcal{U}([-1, 1])$ and $g_\varepsilon$ from
        \cref{eqn:CE_existence_g_eps}.}
    \end{subfigure}
    \caption{Illustration of the counter-example to existence.}
    \label{fig:CE_existence}
\end{figure}
It follows that $g_\varepsilon\#\mu$ converges weakly towards $\nu$ as
$\varepsilon \longrightarrow 0$. As a result, since the measures are compactly
supported, $\W_2^2(g_\varepsilon\#\mu, \nu) \xrightarrow[\varepsilon
\longrightarrow 0]{} 0$, thus the value of Problem \cref{eqn:SSNB_only_convex}
is 0.

To conclude, if Problem \cref{eqn:SSNB_only_convex} had a solution $g$, then it
would be continuous and verify $\W_2^2(g\#\mu, \nu) = 0$ (since the problem
value is 0), thus $g\#\mu = \nu$, which is impossible by the connectivity
argument. Therefore, the problem defined in \cref{eqn:SSNB_only_convex} does not
have a solution.

\subsection{Discussion on Uniqueness}\label{sec:uniqueness}

A natural question is the uniqueness of a solution of the problem
\begin{equation*}
    \underset{g\in G}{\argmin}\ \mathcal{T}_c(g\#\mu, \nu),
\end{equation*}
in the case where the measures, the cost and the class $G$ satisfy the
conditions of \cref{thm:existence_G_c}, guaranteeing existence. A first negative
answer concerns the simple case where $\mu, \nu$ are discrete and at least
two-dimensional. For instance, consider
$$\mu := \frac{1}{2}(\delta_{(-1, 0)} + \delta_{(1, 0)}),\quad \nu :=
\frac{1}{2}(\delta_{(0, -1)} + \delta_{(0, 1)}). $$ Then there are two distinct
maps $g_1, g_2$ both verifying $g_i\#\mu=\nu$, which are characterised in
$L^2(\mu)$ by their values on the two points $(\pm 1, 0)$.
$$g_1((-1, 0)) = (0, -1),\; g_1((1, 0)) = (0, 1),\quad g_2((-1, 0)) = (0, 1),\;
g_2((1, 0)) = (0, -1),$$ as we illustrate in \cref{fig:two_maps_discrete}.
\begin{figure}[ht]
    \center
    \includegraphics[width=0.3\linewidth]{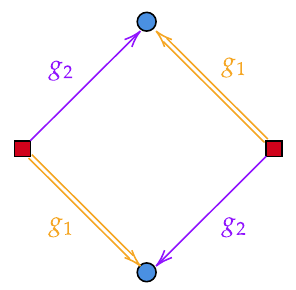}
    \caption{A simple case with two transport maps between 2-point discrete measures in $\R^2$.}
    \label{fig:two_maps_discrete}
\end{figure}
The previous example illustrates a potential issue for uniqueness, which is the
multiplicity of the set $\{g\in G\ |\ g\#\mu = \nu\}$. Another simple
counter-example to uniqueness which stems from this property is for
$\mu=\nu=\mathcal{N}(0, I)$ the standard $d$-variate Gaussian distribution. In
this case, any rotation $R$ verifies $R\#\mathcal{N}(0, I) = \mathcal{N}(0, I)$.

More generally, Brenier's polar factorisation theorem \cite{brenier1991polar}
sheds a light on our invariance issue. We present the theorem below for
completeness, see also \cite{santambrogio2015optimal} Section 1.7.2.
\begin{theorem}[Brenier's Polar Factorisation
    \cite{brenier1991polar}]\label{thm:brenier_polar_facto} Let $\K\subset \R^d$
    be a compact set, and $g: \K \longrightarrow \R^d$. Consider $\U_\K$ the
    probability measure that is the uniform distribution on $\K$, suppose that
    $g\#\U_\K\ll \Leb$, then there exists a unique ($\Leb$-almost-everywhere)
    decomposition $g= (\nabla \pot) \circ s$ such that:
    \begin{itemize}
        \item $\pot: \K \longrightarrow \R^d$ is convex;
        \item $s: \K \longrightarrow \K$ is measure-preserving, which is to say
        that $s\#\U_\K = U_\K$.
    \end{itemize}
\end{theorem}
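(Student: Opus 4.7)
The natural strategy is to build $\nabla \pot$ as the Brenier map from $\U_\K$ to $\nu := g\#\U_\K$, and then produce $s$ by ``inverting'' $\nabla \pot$ along $g$ via Fenchel conjugation. First, the hypothesis $g\#\U_\K \ll \Leb$ permits applying Brenier's Theorem for the quadratic cost between $\U_\K$ and $\nu$: this yields a convex potential $\pot$, unique up to an additive constant, such that $\nabla \pot$ is defined $\Leb$-almost-everywhere on $\K$ and is the (unique) optimal transport map from $\U_\K$ to $\nu$.

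Second, I would introduce the Legendre--Fenchel conjugate $\pot^*$ and invoke the classical identity that $\nabla \pot$ and $\nabla \pot^*$ are mutual inverses almost-everywhere where both are defined; equivalently, $\nabla \pot^*$ is the Brenier map from $\nu$ back to $\U_\K$. Setting $s := (\nabla \pot^*) \circ g$, I would then check the two required properties: $s\#\U_\K = (\nabla \pot^*)\#(g\#\U_\K) = (\nabla \pot^*)\#\nu = \U_\K$, so $s$ is measure-preserving, and $(\nabla \pot) \circ s = (\nabla \pot) \circ (\nabla \pot^*) \circ g = g$ holds $\U_\K$-a.e.

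For uniqueness, given any other decomposition $g = (\nabla \tilde \pot) \circ \tilde s$ with $\tilde \pot$ convex and $\tilde s$ measure-preserving, pushing by $\U_\K$ yields $(\nabla \tilde \pot)\#\U_\K = g\#\U_\K = \nu$, so $\nabla \tilde \pot$ is also an optimal transport map from $\U_\K$ to $\nu$ with convex potential. Brenier's uniqueness then forces $\nabla \tilde \pot = \nabla \pot$ $\Leb$-a.e., and composing with $\nabla \pot^*$ gives $\tilde s = s$ a.e. The main subtlety is the measure-theoretic handling of compositions involving functions defined only almost-everywhere: carefully verifying that the identity $(\nabla \pot) \circ (\nabla \pot^*) = \mathrm{id}$ can be composed with $g$ in the $\U_\K$-a.e.\ sense uses both $\nu \ll \Leb$ and the absolute continuity of $\U_\K$, which together ensure that the exceptional null sets are never encountered along the compositions.
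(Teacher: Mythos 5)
This theorem is stated in the paper as a cited classical result (Brenier \cite{brenier1991polar}, see also \cite{santambrogio2015optimal} Section 1.7.2) and the paper gives no proof of it, so there is nothing internal to compare against. Your argument is the standard derivation of polar factorisation from Brenier's theorem and it is correct: set $\nu := g\#\U_\K$, take the Brenier potential $\pot$ from $\U_\K$ to $\nu$, define $s := (\nabla\pot^*)\circ g$, and use that $\nabla\pot$ and $\nabla\pot^*$ are $\nu$-a.e.\ (resp.\ $\U_\K$-a.e.) mutual inverses; the uniqueness step via $(\nabla\tilde\pot)\#\U_\K = \nu$ and Brenier uniqueness is also the standard one. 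One small attribution to tidy up: the existence of $\nabla\pot$ itself only uses $\U_\K\ll\Leb$; the hypothesis $g\#\U_\K\ll\Leb$ is what is really needed for $\nabla\pot^*$ to be a well-defined map $\nu$-a.e.\ and for the inverse identities (and hence the compositions with $g$ and with $\tilde s$) to hold off null sets, exactly as you note at the end.
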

To fix the ideas, if we consider $\mu = \U_{[0, 1]^d}$, we can fix $g\in G$ and
assume $g\#\U_\K\ll \Leb$ \blue{(see sufficient conditions for this in
\cref{lemma:image_measure_AC} in the Appendix)}, then decompose $g=\nabla\pot
\circ s$. Then any map $h$ of the form $\nabla\pot \circ r$ with $r$ a
measure-preserving map will verify $h\#\U_{[0,1]^d} = g\#\U_{[0,1]^d}$. To avoid
such potential counter-examples, we will focus on the case where $G$ is a subset
of gradients of convex functions. 

We provide a uniqueness result for the $\W_2$ case, under the simplifying
assumption that $\nu = \mu$. Note that if $L < 1$, the identity map does not
belong to $G$, and there does not exist a $g\in G$ such that $g\#\mu = \mu$.
\begin{prop}\label{prop:uniqueness_grad_conv} Suppose that
    $$ G = \left\lbrace g: \R^d \longrightarrow \R^d : g = \nabla \pot\ \Leb
    -\text{a.e.},\; \pot\ \text{convex},\; g\ L-\text{Lipschitz}
    \right\rbrace,$$ and that $\mu\in \mathcal{P}_2(\R^d)$ with $\mu \ll \Leb$.
    Then if $g_0$ and $g_1$ are solutions of the problem
    $$\underset{g\in G}{\argmin}\ \W_2^2(g\#\mu, \mu),$$ then $g_0 = g_1$
    everywhere on $\supp(\mu)$.
\end{prop}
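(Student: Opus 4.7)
The plan is to use Brenier's theorem to linearise the objective functional, then exploit the strict convexity of the squared Euclidean norm together with the convexity of the class $G$.

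First, I would rewrite the objective. Fix $g = \nabla \pot \in G$ with $\pot$ convex. Since $\mu \ll \Leb$ and $g\#\mu \in \mathcal{P}_2(\R^d)$ (as $g$ is $L$-Lipschitz and $\mu \in \mathcal{P}_2(\R^d)$), Brenier's theorem identifies $g$ as the unique optimal transport map from $\mu$ to $g\#\mu$ for the squared Euclidean cost. Consequently, the objective simplifies to
\begin{equation*}
J(g) := \W_2^2(g\#\mu, \mu) = \int_{\R^d} \|g(x) - x\|_2^2 \dd\mu(x).
\end{equation*}

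Second, I would verify that $G$ is a convex set. For $g_0 = \nabla \pot_0$ and $g_1 = \nabla \pot_1$ in $G$, the midpoint $g_{1/2} := (g_0 + g_1)/2$ is the gradient ($\Leb$-a.e.) of the convex function $(\pot_0 + \pot_1)/2$, and is $L$-Lipschitz as a convex combination of $L$-Lipschitz maps. Hence $g_{1/2} \in G$.

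Third, I would invoke strict convexity. For every $x \in \R^d$, strict convexity of $v \mapsto \|v - x\|_2^2$ yields
\begin{equation*}
\left\|\tfrac{g_0(x)+g_1(x)}{2} - x\right\|_2^2 \leq \tfrac{1}{2}\|g_0(x)-x\|_2^2 + \tfrac{1}{2}\|g_1(x)-x\|_2^2,
\end{equation*}
with equality iff $g_0(x) = g_1(x)$. Writing $J^* := J(g_0) = J(g_1)$, integration against $\mu$ combined with the optimality bound $J(g_{1/2}) \geq J^*$ forces equality $\mu$-almost everywhere, so $g_0 = g_1$ $\mu$-a.e.

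Finally, I would upgrade to pointwise agreement on $\supp(\mu)$ by continuity: the set $\{g_0 = g_1\}$ is closed since both maps are $L$-Lipschitz, and it has full $\mu$-measure, so it must contain the closed support $\supp(\mu)$. I don't expect a serious obstacle here; the crux is step one, where Brenier's theorem turns an a priori non-convex functional of $g$ into a strictly convex one, after which everything follows by standard arguments.
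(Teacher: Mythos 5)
Your proof is correct, and it takes a genuinely different route from the paper's. Both arguments share the same key input, namely Brenier's theorem in the form of \cite{santambrogio2015optimal} Theorem 1.48 (a gradient of a convex function is the optimal map from $\mu$ onto its image), and both use the convexity of $G$ to place the midpoint $g_{1/2}$ back in the feasible set. But where you apply Brenier once and for all to linearise the objective into $J(g) = \|g - I\|_{L^2(\mu)}^2$ and then conclude by strict convexity (the parallelogram identity gives the exact relation $J(g_{1/2}) = \tfrac{1}{2}J(g_0) + \tfrac{1}{2}J(g_1) - \tfrac{1}{4}\|g_0-g_1\|_{L^2(\mu)}^2$), the paper instead identifies $(g_t\#\mu)_t$ as a generalised geodesic based at $\mu$ and invokes the curvature inequality of \cite{ambrosio2005gradient} (Lemma 9.2.1, Eq.~9.2.7c), whose penalty term is only $\W_2^2(g_0\#\mu, g_1\#\mu) \leq \|g_0 - g_1\|_{L^2(\mu)}^2$; it therefore concludes only that $g_0\#\mu = g_1\#\mu$ and needs a second appeal to Brenier (uniqueness of the optimal map from $\mu\ll\Leb$ to the common target) to upgrade this to $g_0 = g_1$ $\mu$-a.e. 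Your version is more elementary and self-contained, avoiding both the generalised-geodesic machinery and that final uniqueness step, at the price of being tied to the explicit formula $\W_2^2(g\#\mu,\mu) = \int\|g(x)-x\|_2^2\,\dd\mu(x)$, whereas the paper's curvature argument is phrased so as to connect with the Wasserstein projection literature it cites. Your closing step (a closed set of full $\mu$-measure contains $\supp(\mu)$) matches the paper's parenthetical remark, and all the intermediate verifications (convexity of $G$, $g\#\mu\in\mathcal{P}_2(\R^d)$, applicability of the linearisation to $g_{1/2}$ itself) are in order.
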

\begin{proof}
    We will show that if $g_0$ and $g_1$ are solutions, then
    $g_0\#\mu=g_1\#\mu$. First, one may write $g_i = \nabla \pot_i$ with
    $\pot_i$ convex (for $i=0, 1$). By \cite{santambrogio2015optimal} Theorem
    1.48, since $\pot_i$ is convex, $g_i$ is the optimal transport map between
    $\mu$ and $\nabla\pot_i \#\mu$. Consider for $t \in [0, 1]$ the
    interpolation $g_t := (1-t)g_0 + tg_1$. Then by definition (see
    \cite{ambrosio2005gradient}, Section 9.2), the curve $(g_t\#\mu)_{t\in [0,
    1]}$ is a\footnote{In this case, since $\mu \ll \Leb$, there is even
    uniqueness of \textit{the} generalised geodesic between $g_0\#\mu$ and
    $g_1\#\mu$, but we do not use that fact.} generalised geodesic between
    $g_0\#\mu$ and $g_1\#\mu$ with respect to the base measure $\mu$. This
    allows us to apply \cite{ambrosio2005gradient} Lemma 9.2.1, specifically
    Equation 9.2.7c, which yields
    $$\forall t \in [0, 1],\; \W_2^2(g_t\#\mu, \mu) \leq (1-t)\W_2^2(g_0\#\mu,
    \mu) + t\W_2^2(g_1\#\mu, \mu) - t(1-t)\W_2^2(g_0\#\mu, g_1\#\mu). $$ The
    curvature of this generalised geodesic will allow us to build a better
    solution if $g_0\#\mu \neq g_1\#\mu$, as we illustrate in
    \cref{fig:unicite_absurde}.
    \begin{figure}[H]
        \centering
        \includegraphics[width=0.35\linewidth]{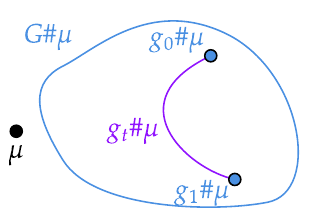}
        \caption{The generalised geodesic based on $\mu$ between $g_0\#\mu$ and $g_1\#\mu$.}
        \label{fig:unicite_absurde}
    \end{figure}
    Taking $t = 1/2$ yields, using the optimality of $g_0$ and $g_1$ and writing
    $v$ for the problem value:
    $$\W_2^2(g_{\frac{1}{2}}\#\mu, \mu) \leq v - \frac{1}{4}\W_2^2(g_0\#\mu,
    g_1\#\mu).$$ Since $G$ is convex, we have $g_{\frac{1}{2}}\in G$, which
    imposes $\W_2^2(g_0\#\mu, g_1\#\mu) = 0$, since $v$ is the optimal problem
    value. We conclude $g_0\#\mu = g_1\#\mu$. However, as stated earlier, by
    \cite{santambrogio2015optimal} Theorem 1.48, $g_i$ is the optimal transport
    map between $\mu$ and $g_i\#\mu$ for $i=0,1$. By uniqueness of the optimal
    transport map in this setting, we conclude $g_0 = g_1$. (The equality holds
    $\mu$-a.e., then since $g_0$ and $g_1$ are assumed Lipschitz, this shows
    equality everywhere on $\supp(\mu)$.)
\end{proof}
\begin{remark}
    One could replace the set $G$ in \cref{prop:uniqueness_grad_conv} by a
    convex subset of $G$, the proof of the result would follow verbatim.
\end{remark}
\begin{remark}
    The problem in \cref{prop:uniqueness_grad_conv} is related to the problem of
    the Wasserstein metric projection, which was studied in \cite{de2016bv} (see
    Section 5), from which the curvature argument in our proof was closely
    inspired. This Wasserstein projection problem was also studied for $\W_p^p$
    in \cite{adve2020nonexpansiveness}. 
\end{remark}
\begin{remark}
    Under some assumptions, it may be possible to find subclasses of gradients
    of convex functions $G$ such that the set $G\#\mu\subset\mathcal{P}_2(\R^d)$
    is geodesically convex (with respect to $\W_2$ geodesics): take $g_0, g_1
    \in G$, assume that $g_0\#\mu\ll\Leb$ (\cref{lemma:image_measure_AC}
    provides a sufficient condition on $g_0$ and $\mu$ for this to be the case).
    Then the $\W_2$ geodesic from $g_0\#\mu$ to $g_1\#\mu$ is
	$$\nu_t := ((1-t)I + tT)\#g\#\mu_0,$$ where $T$ is the optimal transport map
	from $g_0\#\mu$ to $g_1\#\mu$, which is uniquely defined thanks to Brenier's
	Theorem (see \cite{santambrogio2015optimal}, Theorem 1.22 for a possible
	reference without compactness assumptions). Since $(T\circ
	g_0)\#\mu=g_1\#\mu$, under some regularity assumptions, it may be possible
	to show that $T\circ g_0 = g_1$ using the Monge-Ampère equation, then
	$((1-t)I+tT)\circ g_0 = (1-t)g_0 + tg_1 \in G$. In this case, the
	generalised geodesic based on $\mu$ coincides with the $\W_2$ geodesic
	between $g_0\#\mu$ and $g_1\#\mu$. 
	
	Unfortunately, $\rho \longmapsto \W_2^2(\rho, \nu)$ is not convex along
	$\W_2$ geodesics, since it satisfies the opposite inequality
	(\cite{ambrosio2005gradient}, Theorem 7.3.2). As a result, even if we found
	a convex class $G$ of gradients of convex functions such that $G\#\mu$ were
	geodesically convex, curvature arguments would not yield uniqueness
	immediately. Intuition suggests that in some sense, the problem minimises a
	concave function over a convex set, which bodes poorly with uniqueness.	
\end{remark}

In \cref{sec:1D}, we shall study the case $d=1$ and show uniqueness and an
explicit expression for the minimiser of the map problem for non-decreasing
functions $g$ and the squared Euclidean cost. \blue{To conclude this discussion,
even for the favourable case where $\mu\ll \Leb^d$, $G$ is a subset of gradients
of convex functions and $c(x,y) = \|x-y\|_2^2$, we conjecture that uniqueness is
not guaranteed in general for $d\geq 2$. }

\subsection{The Plan Approximation Problem}\label{sec:plan_problem}

In some cases, one may have access to a transport plan between two measures
$\mu,\nu$, which poses the natural question of finding a map that approximates
this transport plan. For instance, one may compute the optimal entropic plan
\cite{cuturi2013sinkhorn}, a Gaussian-Mixture-Model optimal plan
\cite{delon2020wasserstein}, or an optimal transport plan for a cost that does
not verify the twist condition (see \cite{santambrogio2015optimal} Definition
1.16), \blue{or more generally an optimal plan when the Monge problem is not
equivalent to the Kantorovich problem (see
\cite{brenier1991polar,gangbo1996geometry,pratelli2007equality} for some known
equivalence cases).} 

Given a cost $C: (\R^k\times\R^d)\times (\R^k\times\R^d) \longrightarrow \R_+$,
and measures $\mu\in \mathcal{P}(\R^k), \nu \in \mathcal{P}(\R^d)$, we will want
to approximate a plan $\gamma\in\Pi(\mu, \nu)$ by the image measure $(I,
g)\#\mu$, where $I$ denotes the identity map of $\R^k$. We define the
Constrained Approximate Transport Plan problem as:
\begin{equation}\label{eqn:CATP}
	\underset{g \in G}{\argmin}\ \TC((I, g)\#\mu, \gamma).
\end{equation}
Similarly to \cref{eqn:CATM}, the transport cost in \cref{eqn:CATP} can be
re-written using the change-of-variables formula
(\cref{lemma:plans_and_push_forwads}):
\begin{equation}\label{eqn:CATP_CoV}
    \TC((I, g)\#\mu, \gamma) = \underset{\rho\in \Pi(\mu, \gamma)}{\min}\ \int_{\R^k\times(\R^k\times\R^d)}C\left((x, g(x)), (y_1 ,y_2)\right)\dd\rho(x, y_1, y_2).
\end{equation}
To begin with, one may cast \cref{eqn:CATP} as a map problem (\cref{eqn:CATM}),
providing existence automatically under adequate conditions.
\begin{corollary}\label{cor:existence_CATP} Consider the class of functions 
    $$\widetilde{G} := \{\tilde{g} : \R^k \longrightarrow \R^k\times\R^d\ :\
    \tilde{g} = (x,y) \longmapsto (x, g(y)),\; g \in G\},$$ the map problem
    (\cref{eqn:CATM}) is a particular map problem (\cref{eqn:CATP}):
    $$\underset{g \in G}{\min}\ \TC((I, g)\#\mu, \gamma) = \underset{\tilde{g}
    \in \widetilde{G}}{\min}\ \TC(\tilde{g}\#\mu, \gamma),$$ hence existence
    holds by \cref{thm:existence_G_c} if the conditions of the theorem are
    verified by $C, \widetilde{G}$ and the measures $\mu, \gamma$.
\end{corollary}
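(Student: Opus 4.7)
The plan is to recognise \cref{eqn:CATP} as a particular instance of the map problem \cref{eqn:CATM} for a specific reparametrised class, so that the existence assertion reduces to a direct application of \cref{thm:existence_G_c}.

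First, I would verify the equality of the two problems. The map $G \ni g \longmapsto \tilde{g} := (I, g) \in \widetilde{G}$ is by construction a bijection onto $\widetilde{G}$. Moreover, by definition of the push-forward, $\tilde{g}\#\mu = (I, g)\#\mu$ as measures on $\R^k \times \R^d$, so that $\TC((I, g)\#\mu, \gamma) = \TC(\tilde{g}\#\mu, \gamma)$ for every $g \in G$. Taking the infimum on both sides over the corresponding classes yields the announced equality, without any hypothesis on $C$, $G$ or the measures.

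The second step is to apply \cref{thm:existence_G_c} to the map problem on the right-hand side, viewing $\tilde{g}$ as a function from $\R^k$ to $\R^k \times \R^d \cong \R^{k+d}$, with source measure $\mu$, target measure $\gamma$, ground cost $C$, and function class $\widetilde{G}$. The corollary's hypothesis explicitly states that $C, \widetilde{G}, \mu, \gamma$ satisfy the conditions of the theorem, so the theorem directly produces a minimiser $\tilde{g}^* = (I, g^*) \in \widetilde{G}$, which by the bijection of the first step corresponds to a minimiser $g^* \in G$ of \cref{eqn:CATP}.

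I do not expect any genuine obstacle here; the content of the corollary is essentially a change of viewpoint. The one thing worth recording for downstream use is how assumption ii) of \cref{thm:existence_G_c} on $\widetilde{G}$ follows from the corresponding assumption on $G$: if $g$ is $\gradlip$-Lipschitz on $\R^k$ then $\tilde{g} = (I, g)$ is $\sqrt{1 + \gradlip^2}$-Lipschitz on $\R^k$ (with the Euclidean norm on $\R^k\times\R^d$), and local uniform convergence $\tilde{g}_n \to \tilde{g}$ is equivalent to local uniform convergence $g_n \to g$ since the first coordinate is the identity, so closedness of $G$ for the compact-open topology transfers to $\widetilde{G}$.
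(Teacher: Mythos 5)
Your proposal is correct and matches the paper's intent exactly: the paper states this corollary without proof, treating it as the immediate observation that $\tilde{g}\#\mu = (I,g)\#\mu$ so the two infima coincide tautologically, and that \cref{thm:existence_G_c} then applies to the reparametrised class $\widetilde{G}$. Your closing remark on how the Lipschitz bound and compact-open closedness transfer from $G$ to $\widetilde{G}$ is a useful addition (the corollary's hypotheses are stated on $\widetilde{G}$ directly, so it is not strictly needed, but it is how one would verify them in practice).
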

\begin{remark}
    In the light of \cref{remark:generalise_target_space}, one could replace the
    input space $\R^k$ and the target space $\R^d$ by Polish spaces $\X$ and
    $\Y$ verifying the Heine-Borel property, in which case condition 1) would
    ask for $(x_1, x_2) \longmapsto C((x_1, x_2), (y_1, y_2))$ to be proper.
\end{remark}
We shall see that in certain cases, the two problems \cref{eqn:CATP} and
\cref{eqn:CATM} are in fact equivalent. 

\begin{prop}\label{prop:GSSNB_equivalence} Consider a cost $C$ of the separable
	form $C((x_1, x_2), (y_1, y_2)) = h(c_1(x_1, y_1), c_2(x_2, y_2))$, where
	$h: \R_+ \times \R_+ \longrightarrow \R_+$, $c_1: \R^k\times \R^k
	\longrightarrow \R_+$ and $c_2: \R^d\times\R^d$ are \blue{lower
	semi-}continuous, with $\forall x \in \R^k,\; c_1(x, x) = 0$, and $\forall
	u, v \in \R_+,\; h(u, v)\geq v$ and $h(0, v)=v$. Let $g: \R^k
	\longrightarrow \R^d$ be a measurable function, $\nu \in \mathcal{P}(\R^d)$
	and $\mu\in \mathcal{P}(\R^k)$. Let $\gamma \in \Pi(\mu, \nu)$ be a plan
	between $\mu$ and $\nu$.

    We assume that the value $\TC((I, g)\#\mu, \gamma)$ is finite. We have the
    equality
	$$\mathcal{T}_{c_2}(g\#\mu, \nu) = \TC((I, g)\#\mu, \gamma).$$
\end{prop}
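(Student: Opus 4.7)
The plan is to prove the two inequalities separately, after putting both quantities into comparable form. Applying \cref{lemma:plans_and_push_forwads} to the pair $(g, I_{\R^d})$ with $c_2$ playing the role of $c'$ rewrites $\mathcal{T}_{c_2}(g\#\mu, \nu) = \inf_{\tilde\pi \in \Pi(\mu, \nu)} \int c_2(g(x), y)\,\dd\tilde\pi(x, y)$, while the right-hand side already reads $\TC((I, g)\#\mu, \gamma) = \inf_{\rho \in \Pi(\mu, \gamma)} \int h(c_1(x, y_1), c_2(g(x), y_2))\,\dd\rho$ via \cref{eqn:CATP_CoV}.

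For $\mathcal{T}_{c_2}(g\#\mu, \nu) \leq \TC((I, g)\#\mu, \gamma)$, I would use the assumption $h(u, v) \geq v$ to lower-bound the integrand of the second infimum by $c_2(g(x), y_2)$. The $(x, y_2)$-projection of any $\rho \in \Pi(\mu, \gamma)$ is a plan in $\Pi(\mu, \nu)$: its first marginal is $\mu$ (the first marginal of $\rho$) and its second marginal is $\nu$ (the $y_2$-marginal of $\gamma$). Taking the infimum in $\rho$ then gives the bound directly.

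For the reverse inequality, I would construct an explicit $\rho_0 \in \Pi(\mu, \gamma)$ exploiting both $c_1(x, x) = 0$ and $h(0, v) = v$. The natural candidate is the diagonal lift $\rho_0 := \left((y_1, y_2) \mapsto (y_1, y_1, y_2)\right)\#\gamma$, which is supported on $\{(x, y_1, y_2) : x = y_1\}$ and whose marginals are $\mu$ and $\gamma$ by construction; on this support the integrand simplifies to $h(0, c_2(g(y_1), y_2)) = c_2(g(y_1), y_2)$, so the cost under $\rho_0$ equals $\int c_2\,\dd((g, I)\#\gamma)$ with $(g, I)\#\gamma \in \Pi(g\#\mu, \nu)$.

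The main obstacle is matching this upper bound with $\mathcal{T}_{c_2}(g\#\mu, \nu)$: the diagonal plan $\rho_0$ rigidly fixes the $(x, y_2)$-projection to be $\gamma$, while the lower bound ranges over all of $\Pi(\mu, \nu)$. I expect the resolution to require a gluing argument combining an optimiser $\tilde\pi^* \in \Pi(\mu, \nu)$ of $\int c_2(g(x), y)\,\dd\tilde\pi$ with the conditional structure of $\gamma$ (along their common $\mu$- or $\nu$-marginal), producing a $\rho \in \Pi(\mu, \gamma)$ whose $(x, y_2)$-marginal realises the optimum while still enjoying the cancellation $h(c_1, c_2) = c_2$ afforded by the diagonal structure. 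Carrying out this gluing carefully under the stated hypotheses on $h$ and $c_1$ is where the technical heart of the proof lies.
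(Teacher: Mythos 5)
Your first inequality, $\mathcal{T}_{c_2}(g\#\mu,\nu)\leq \TC((I,g)\#\mu,\gamma)$, is exactly the paper's argument (lower-bound the integrand by $c_2(g(x),y_2)$ via $h(u,v)\geq v$, then push any $\rho\in\Pi(\mu,\gamma)$ to its $(x,y_2)$-marginal, which lies in $\Pi(\mu,\nu)$), and it is correct. The genuine gap is the reverse inequality, and you have put your finger on exactly the right obstruction: your diagonal lift $\rho_0$ only yields $\TC((I,g)\#\mu,\gamma)\leq \int c_2(g(y_1),y_2)\,\dd\gamma(y_1,y_2)$, the cost of the \emph{particular} plan $\gamma$, not the optimal value $\mathcal{T}_{c_2}(g\#\mu,\nu)$. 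The gluing you hope for cannot exist: if $\rho\in\Pi(\mu,\gamma)$ is concentrated on $\{x=y_1\}$ (which is what makes the cancellation $h(c_1,c_2)=c_2$ available), then its $(x,y_2)$-marginal coincides with its $(y_1,y_2)$-marginal, which the constraint $\rho\in\Pi(\mu,\gamma)$ forces to be $\gamma$; you cannot simultaneously sit on the diagonal and realise an arbitrary optimiser $\tilde\pi^*$ as the $(x,y_2)$-marginal. For what it is worth, the paper's own proof of this direction takes $\pi^*$ optimal for $\pi\mapsto\int c_2(g(x),y)\dd\pi$ and sets $\rho(\dd x\,\dd y_1\,\dd y_2)=\delta_{y_1}(\dd x)\,\pi^*(\dd y_1\,\dd y_2)$ --- the diagonal lift of $\pi^*$ rather than of $\gamma$ --- but that measure has second marginal $\pi^*$, not $\gamma$, so it is not an admissible competitor in $\Pi(\mu,\gamma)$ unless $\pi^*=\gamma$.

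In fact the asserted identity fails without further hypotheses on $\gamma$, so no completion of your argument is possible. Take $k=d=1$, $c_1(x,y)=c_2(x,y)=|x-y|^2$, $h(u,v)=u+v$ (so $C$ is the squared Euclidean cost on $\R^2$), $\mu=\nu=\frac{1}{2}(\delta_0+\delta_1)$, $g=\mathrm{id}$ and $\gamma=\frac{1}{2}(\delta_{(0,1)}+\delta_{(1,0)})\in\Pi(\mu,\nu)$. Then $\mathcal{T}_{c_2}(g\#\mu,\nu)=\mathcal{T}_{c_2}(\mu,\mu)=0$, whereas $(I,g)\#\mu=\frac{1}{2}(\delta_{(0,0)}+\delta_{(1,1)})$ and every point of its support is at squared distance exactly $1$ from every point of the support of $\gamma$, so $\TC((I,g)\#\mu,\gamma)=1$; all stated hypotheses hold and both values are finite. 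The equality does become true (and both your bounds then close the argument) under the additional assumption that $\gamma$ minimises $\pi\mapsto\int c_2(g(x),y)\dd\pi(x,y)$ over $\Pi(\mu,\nu)$, since then your $\rho_0$ already attains $\mathcal{T}_{c_2}(g\#\mu,\nu)$; alternatively, what your $\rho_0$ proves unconditionally is the two-sided estimate $\mathcal{T}_{c_2}(g\#\mu,\nu)\leq\TC((I,g)\#\mu,\gamma)\leq\int c_2(g(y_1),y_2)\dd\gamma(y_1,y_2)$. So the missing middle step in your write-up is not a technical omission on your part but a genuine failure of the claimed statement.
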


\begin{proof}
	For $\rho \in \Pi(\mu, \gamma)$, let $A(\rho) :=
	\int_{\R^k\times(\R^k\times\R^d)}C((x, g(x)), (y_1, y_2))\dd\rho(x, y_1,
	y_2) <+\infty$, and denote $A^* := \TC((I, g)\#\mu, \gamma)$. Likewise, for
	$\pi \in \Pi(\mu, \nu),$ let $B(\pi) := \Int{\R^k\times\R^d}{}c_2(g(x),
	y)\dd\pi(x, y)$, and $B^* := \mathcal{T}_{c_2}(g\#\mu, \nu).$
	
	First, we prove $A^* \leq B^*$. By \cite{santambrogio2015optimal}, Theorem
	1.7, there exists $\pi^* \in \Pi(\mu, \nu)$ such that $B^* = B(\pi^*)$.
	Define $\rho \in \Pi(\mu, \gamma)$ a measure such that for each test
	function $f$, 
	$$\Int{\R^k\times(\R^k\times\R^d)}{}f(x, y_1, y_2)\dd\rho(x, y_1, y_2) =
	\Int{\R^k\times\R^d}{}f(y_1, y_1, y_2)\dd\pi^*(y_1, y_2),$$ or symbolically
	``$\rho(\dd x \dd y_1 \dd y_2) = \delta_{y_1}(\dd x)\pi^*(\dd y_1 \dd
	y_2)$". Then, since $h(c_1(y_1, y_1), c_2(g(y_1), y_2))=c_2(g(y_1), y_2)$,
	we have
	$$A^* \leq A(\rho) = \Int{\R^k\times(\R^k\times\R^d)}{}h(c_1(y_1, y_1),
	c_2(g(y_1), y_2))\dd\pi^*(y_1, y_2) = \Int{\R^k\times\R^d}{}c_2(g(y_1),
	y_2)\dd\pi^*(y_1, y_2)=B^*.$$ Now for $A^* \geq B^*$, we let $\rho \in
	\Pi(\mu, \gamma)$. Using $h(u,v)\geq v,$ we have
	$$A(\rho) = \Int{\R^k\times(\R^k\times\R^d)}{}h(c_1(x, y_1), c_2(g(x),
	y_2))\dd\rho(x, y_1, y_2) \geq \Int{\R^k\times(\R^k\times\R^d)}{}c_2(g(x),
	y_2)\dd\rho(x, y_1, y_2).$$ Again, we can define $\pi \in \Pi(\mu, \nu)$
	such that for any test function $f$, 
	$$\Int{\R^k\times\R^d}{}f(x, y_2)\dd\pi(x, y_2) =
	\Int{\R^k\times(\R^k\times\R^d)}{}f(x, y_2)\dd\rho(x, y_1, y_2),$$ and
	notice 
    $$\Int{\R^k\times(\R^k\times\R^d)}{}c_2(g(x), y_2)\dd\rho(x, y_1, y_2) =
	B(\pi) \geq B^*, $$ which yields $A^* \geq B^*$.
\end{proof}

For example, the cost $C(x, y) = \|x-y\|_2^2$ satisfies these conditions (with
$h(u, v) = u+v$), and thus the problems \cref{eqn:CATM} and \cref{eqn:CATP} are
equivalent. This is still the case for costs of the form $C = \|\cdot -
\cdot\|_p^{qp}$ for $p\geq 1$ and $q>0$, in which case one takes $h(u, v) =
(u^{1/q} + v^{1/q})^q$. For $C((x_1, x_2), (y_1, y_2)) = \|(x_1, x_2) - (y_1,
y_2)\|_{\infty}^p$, this is also the case with $c_{1}(x,y) = c_{2}(x,y) =
\|x-y\|_{\infty}^p$ and $h(u,v)=\max(u,v)$.

In \blue{contrast}, a possible choice of norm on the product space is
$\|x\|_\Sigma = (x^\top \Sigma^{-1}x)^{1/2}$ for $\Sigma$ symmetric
positive-definite. This choice is of interest since the cost $C((x_1, x_2),
(y_1, y_2)) = \|(x_1, x_2) - (y_1, y_2)\|_{\Sigma}^2$ is quadratic (which is
desirable for numerics), but does not satisfy the equivalence condition from
\cref{prop:GSSNB_equivalence} as soon as $\Sigma$ is not block-diagonal.

In \cref{fig:illu_plan_approximation}, we illustrate the plan approximation
problem for the quadratic cost for two different plans: the Entropic Optimal
Transport plan \cite{peyre2019computational} and the Gaussian Mixture Model OT
plan \cite{delon2020wasserstein}. The numerics where done using the tools
presented in \cref{sec:numerics_grad_convex}. Note that the plan approximation
is equivalent to a map problem in this case, and has a particular structure due
to the one-dimensional setting, hence we emphasise that
\cref{fig:illu_plan_approximation} is merely an illustration of the problem at
hand.

\begin{figure}[ht]
    \centering
    \begin{adjustbox}{valign=c}
        \begin{subfigure}[b]{0.45\textwidth}
            \centering
            \includegraphics[width=\textwidth]{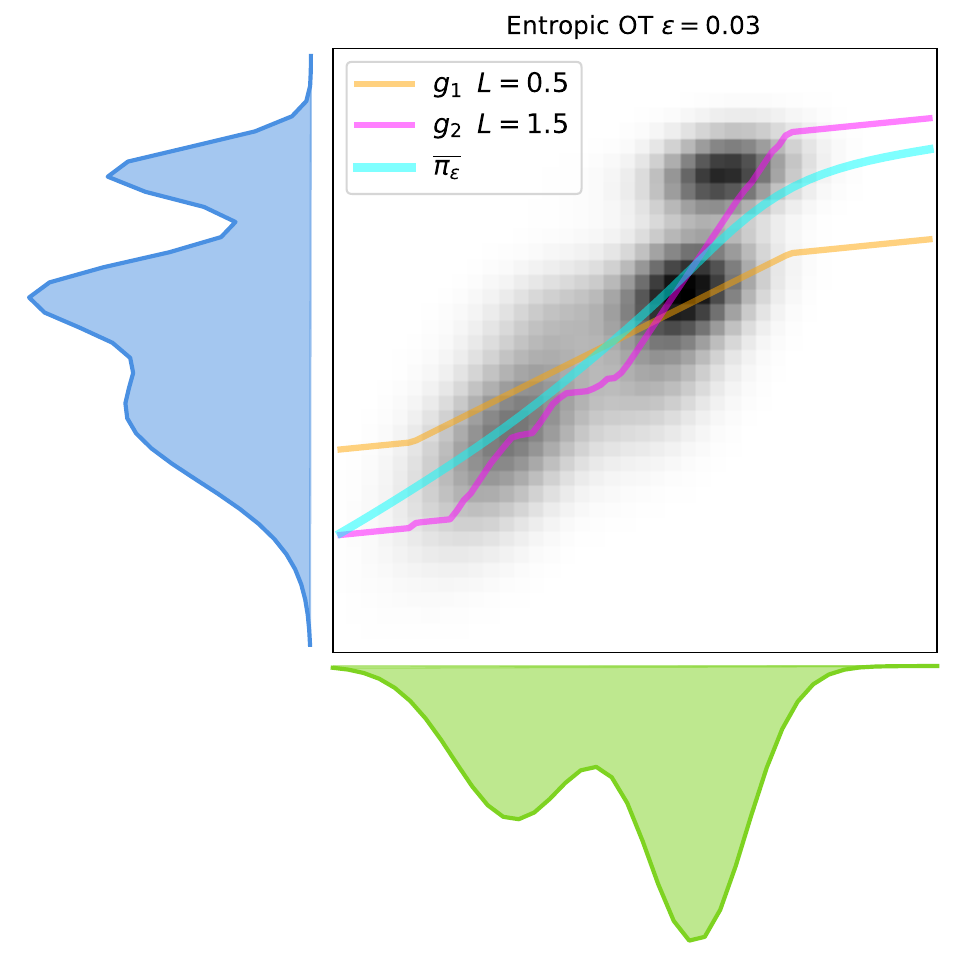}
            \caption{plan approximation solutions for the Entropic-OT plan
            \cite{peyre2019computational}.}
        \end{subfigure}
    \end{adjustbox}
    \hfill
    \begin{adjustbox}{valign=c}
        \begin{subfigure}[b]{0.45\textwidth}
            \centering
            \includegraphics[width=\textwidth]{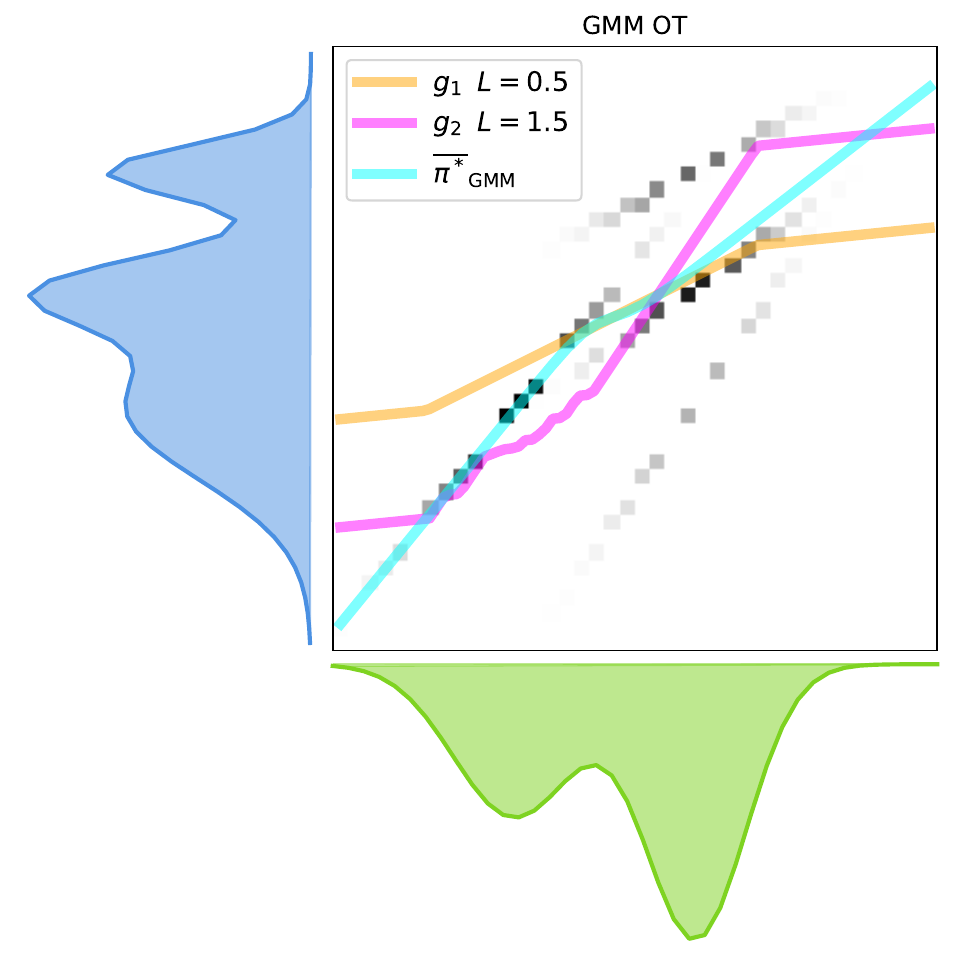}
            \caption{Illustration of plan approximation solutions for the GMM-OT
            plan \cite{delon2020wasserstein}.}
        \end{subfigure}
    \end{adjustbox}
    \caption{Illustration of solutions of plan approximation problems
    (\cref{eqn:CATP}), for two different plans between Gaussian Mixtures. We
    compare the plans with $L=1/2$ and $L=3/2$-Lipschitz solutions, as well as
    to the barycentric projection of the given plans (see
    \cref{sec:proj_of_barycentric}).}
    \label{fig:illu_plan_approximation}
\end{figure}
\section{Alternate Minimisation in the Squared Euclidean
Case}\label{sec:alternate_minimisation}

The map problem \cref{eqn:CATM} is a minimisation problem over $\pi\in \Pi(\mu,
\nu)$ and $g\in G$:
$$\underset{g\in G}{\min}\ \underset{\pi\in\Pi(\mu, \nu)}{\min}\
\int_{\X\times\R^d}c(g(x), y)\dd\pi(x,y). $$ In this section, we study this
alternate minimisation problem in the case where $c(x,y) = \|x-y\|_2^2$
\blue{and $\X=\R^d$, thus with maps $g: \R^d\longrightarrow \R^d$.} 

When $\pi\in \Pi(\mu, \nu)$ is fixed, the sub-problem has the particular structure
\begin{equation}\label{eqn:prob_L2_pi_fixed}
	\underset{g\in G}{\min}\ \int_{\X\times\R^d}\|g(x)-y\|_2^2\dd\pi(x,y).
\end{equation}
To ensure the finiteness of the cost, we assume that \blue{$\mu, \nu \in
\mathcal{P}_2(\R^d)$}. We shall see in \cref{sec:proj_of_barycentric} that the
problem in \cref{eqn:prob_L2_pi_fixed} is equivalent to the $L^2$ projection
of the barycentric map $\oll{\pi}$ onto the set $G$, provided that $G$ is a
convex and closed subset of $L^2(\mu)$.

When $g\in G$ is fixed, the problem reads
\begin{equation}\label{eqn:prob_L2_g_fixed}
	\underset{\pi \in \Pi(\mu, \nu)}{\min}\ \int_{\X\times\R^d}\|g(x)-y\|_2^2\dd\pi(x,y),
\end{equation}
and can be seen from two different viewpoints: either as the squared Euclidean
optimal transport problem between $g\#\mu$ and $\nu$ (i.e. $\W_2^2(g\#\mu,
\nu)$), or as the optimal transport problem with cost $c(x,y) := \|g(x)-y\|_2^2$
between $\mu$ and $\nu$. If $g\#\mu$ is absolutely continuous and $\nu$ is
discrete, then \cref{eqn:prob_L2_g_fixed} is a semi-discrete OT problem. We
provide sufficient conditions on $g$ for this to be the case in
\cref{sec:continuous_to_discrete}.

This alternate minimisation viewpoint poses a natural question: if $\pi := \pi^*
\in \Pi^*(\mu, \nu)$ is an optimal plan between $\mu$ and $\nu$ \blue{for the
quadratic cost $c(y,y') := \|y-y'\|_2^2$}, does the following equality holds?
\begin{equation}\label{eqn:alternate_equivalence_question}
	\underset{g\in G}{\argmin}\ \underset{\pi \in \Pi(\mu, \nu)}{\min}\ \int_{\X\times \R^d} \|g(x)-y\|_2^2\dd\pi(x,y)\;\eqquestion\; \underset{g\in G}{\argmin}\ \int_{\X\times\R^d} \|g(x)-y\|_2^2\dd\pi^*(x,y).
\end{equation}

In \cref{sec:1D}, we prove that this equality holds in the one-dimensional case
$\X=\R^d=\R$ and if $G$ is a subclass of non-decreasing functions, thus
generalizing a result of~\cite{paty2020regularity}. We also provide a
counter-example of this property when $d\geq 2$ in~\cref{sec:CE_2d}.

\subsection{Projection of the Barycentric Map}\label{sec:proj_of_barycentric}

In this section, we will show that the sub-problem with $\pi\in \Pi(\mu, \nu)$
fixed can be written as the following $L^2$ projection problem:
$$\underset{g\in G}{\argmin}\ \int_{\X\times\R^d}\|g(x)-y\|_2^2\dd\pi(x,y) =
\underset{g\in G}{\argmin}\ \|g-\oll{\pi}\|_{L^2(\mu)}^2, $$ where $\oll{\pi}$
is the barycentric projection of $\pi$ (defined below), and $L^2(\mu)$ is a
shorthand for $L^2(\mu; \R^d)$, the space of measurable functions $T:
\X\longrightarrow \R^d$ such that $\int_\X \|T(x)\|_2^2\dd\mu(x)<+\infty$. We
begin by briefly introducing the notion of barycentric projection.

The \textit{barycentric projection} of $\pi$ is the map $\oll{\pi}: \R^d
\longrightarrow \R^d$ defined for $\mu$-almost all $x\in\R^d$ by
\begin{equation}\label{eqn:bar_proj}
	\oll{\pi}(x) = \mathbb{E}_{(X, Y)\sim \pi}[Y|X=x].
\end{equation}
As illustrated in \cref{fig:barycentric_projection}, if $\pi$ admits a
disintegration with respect to its first marginal $\mu$ of the form $\pi(\dd
x\dd y) = \pi_{x}(\dd y) \mu(\dd x)$, then
$$\oll{\pi}(x) = \Int{\R^d}{}y\, \dd\pi_x(y). $$

\begin{figure}[ht]
    \centering
    \includegraphics[width=.4\linewidth]{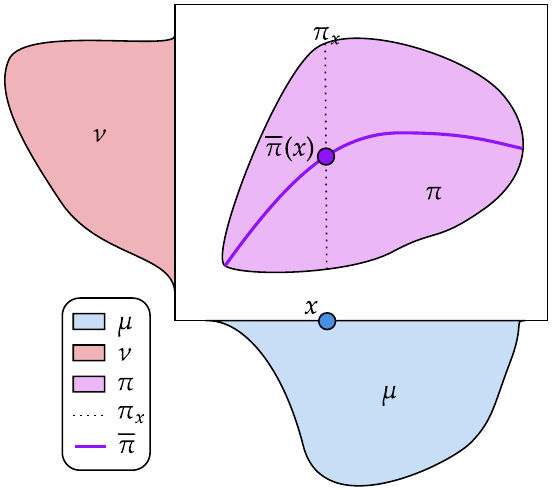}
    \caption{Illustration of a barycentric projection. The disintegration of the coupling $\pi$ with respect to its first marginal $\mu$ at $x$ is the measure $\pi_x$ concentrated on the dotted line. The barycentric projection of $\pi$ evaluated at $x$ is the mean of the measure $\pi_x$.}
    \label{fig:barycentric_projection}
  \end{figure}
  
Since the conditional expectation minimises the $L^2$ distance, we also have
\begin{equation}\label{eqn:bar_proj_minL2}
	\oll{\pi} = \underset{T \in L^2(\mu)}{\argmin}\Int{\R^{2d}}{}\|y-T(x)\|_2^2\dd\pi(x,y),
\end{equation}
where the equality is to be understood in $L^2(\mu)$. Another interesting property is that if $\pi=\pi^* \in \Pi^*(\mu, \nu)$ is an
optimal transport plan between $\mu$ and $\nu$ with respect to the squared
Euclidean distance cost, then by \cite{ambrosio2005gradient}, Section 6.2.3,
there exists $\varphi: \R^d \longrightarrow \R$ convex such that for
$\pi^*$-almost-every $(x,y) \in \R^{2d}$, we have $y \in \partial \varphi(x)$,
where $\partial \varphi(x)$ denotes the \textit{Fréchet sub-differential} of
$\varphi$:
$$y \in \partial\varphi(x) \Longleftrightarrow \underset{z\longrightarrow
x}{\liminf}\ \cfrac{\varphi(z) - \varphi(x) - \langle y, z-x\rangle}{\|x-z\|_2}
\geq 0. $$ Since the Fréchet sub-differential of a convex function is convex, it
follows that for $\mu$-almost every $x \in \R^d,\; \oll{\pi^*}(x) \in \partial
\varphi(x)$.

If we require constraints on $T$ in \cref{eqn:bar_proj_minL2}, we obtain exactly
the sub-problem of the map problem with a fixed plan $\pi$
(\cref{eqn:prob_L2_pi_fixed}), which we reproduce below:
$$\underset{g \in G}{\argmin}\ \Int{\X\times \R^d}{}\|g(x)-y\|_2^2\dd\pi(x,
y).$$ For this reason, we call this problem the Constrained Barycentric Map
problem. A consequence of the proof of \cref{thm:existence_G_c} is that this
problem has a solution. If $G$ is a convex set and closed in $L^2(\mu)$, then
existence and uniqueness are guaranteed by the Hilbert projection Theorem. Since
$\oll{\pi}$ minimises the $L^2$ distance, it is a solution of
\cref{eqn:prob_L2_pi_fixed} if it is in $G$.

Using the fact that the barycentric projection is an $L^2$ projection
(\cref{eqn:bar_proj_minL2}), one may re-write the Projected Barycentric Map
Problem \cref{eqn:prob_L2_pi_fixed} as an $L^2$ minimisation with respect to the
barycentric projection. In
\cref{prop:constrained_bar_L2_proj_on_barycentric_projection}, we need not
assume that $\X=\R^d$, but we shall apply it later in \cref{sec:1D} to the case
$\X=\R$.

\begin{prop}\label{prop:constrained_bar_L2_proj_on_barycentric_projection} Let
	$\pi\in\Pi(\mu, \nu)$ and $f: \X \longrightarrow \R^d$ be a measurable
	function. Then one has
	\begin{equation}\label{eqn:snb_cost_reformulation}
		\Int{\X\times\R^d}{}\|f(x)-y\|_2^2\dd\pi(x,y) = \Int{\X}{}\|f(x)-\oll{\pi}(x)\|_2^2\dd\mu(x) + \Int{\X\times \R^d}{}\|y-\oll{\pi}(x)\|_2^2\dd\pi(x,y),
	\end{equation}
	and as a result, the Projected Barycentric Map problem
	\cref{eqn:prob_L2_pi_fixed} is equivalent to the problem
	\begin{equation}\label{eqn:snb_L2_reformulation}
		\underset{g\in G}{\argmin}\ \Int{\X}{}\|g(x)-\oll{\pi}(x)\|_2^2\dd\mu(x).
    \end{equation}

Moreover, the second term on the right-hand side of
\cref{eqn:snb_cost_reformulation}  only depends on
$\oll{\pi}$ and the measures $\mu, \nu$ (it doesn't depend on
$\pi$). More precisely, we have 
$$ \Int{\X\times \R^d}{}\|y-\oll{\pi}(x)\|_2^2\dd\pi(x,y) = m_2(\nu) -
m_2(\oll{\pi}\#\mu), $$
where $m_2(\rho) := \int\|x\|_2^2\dd\rho(x)$ for a positive measure $\rho$.
\end{prop}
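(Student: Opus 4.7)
The plan is to prove the identity by a standard orthogonal decomposition (the Pythagorean-type identity for the conditional expectation), then derive the closed-form for the residual term by expanding the square. The equivalence of the optimisation problems follows immediately, since the residual term is independent of $g$.

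First I would expand $\|f(x)-y\|_2^2 = \|f(x)-\bar\pi(x)\|_2^2 + 2\langle f(x)-\bar\pi(x),\,\bar\pi(x)-y\rangle + \|\bar\pi(x)-y\|_2^2$, and integrate against $\pi$. The only nontrivial piece is the cross term
\[
\int_{\X\times\R^d}\langle f(x)-\bar\pi(x),\,\bar\pi(x)-y\rangle\,\mathrm{d}\pi(x,y).
\]
To handle it, I would disintegrate $\pi$ with respect to its first marginal $\mu$, writing $\mathrm{d}\pi(x,y) = \mathrm{d}\pi_x(y)\mathrm{d}\mu(x)$. Then for $\mu$-a.e.\ $x$, the inner integral against $\pi_x$ is $\langle f(x)-\bar\pi(x),\,\bar\pi(x) - \int y\,\mathrm{d}\pi_x(y)\rangle$, which vanishes by the very definition \cref{eqn:bar_proj} of the barycentric projection. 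This yields \cref{eqn:snb_cost_reformulation}. Since the second term on the right-hand side does not depend on $g$, the equivalence with \cref{eqn:snb_L2_reformulation} is immediate.

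For the closed-form of the residual, I would expand $\|y-\bar\pi(x)\|_2^2 = \|y\|_2^2 - 2\langle y,\bar\pi(x)\rangle + \|\bar\pi(x)\|_2^2$ and integrate against $\pi$ term by term. The first term gives $\int\|y\|_2^2\,\mathrm{d}\nu(y) = m_2(\nu)$ since $\nu$ is the second marginal of $\pi$. The third gives $\int\|\bar\pi(x)\|_2^2\,\mathrm{d}\mu(x) = m_2(\bar\pi\#\mu)$ by definition of the push-forward. For the cross term, I would again disintegrate and use $\int y\,\mathrm{d}\pi_x(y) = \bar\pi(x)$ to obtain $\int \|\bar\pi(x)\|_2^2\,\mathrm{d}\mu(x) = m_2(\bar\pi\#\mu)$. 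Summing gives $m_2(\nu) - m_2(\bar\pi\#\mu)$.

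The only mildly delicate point is integrability: to justify splitting the integrals and applying Fubini to the disintegration, it suffices to observe that $\nu \in \mathcal{P}_2(\R^d)$ by standing assumption, hence $m_2(\nu) < +\infty$, and that $\bar\pi \in L^2(\mu)$ by Jensen's inequality applied to the conditional expectation (so $m_2(\bar\pi\#\mu) \leq m_2(\nu) < +\infty$). Then the identity holds in $[0,+\infty]$ with both sides finite exactly when $f \in L^2(\mu)$; in that case the cross term is absolutely integrable and the manipulations above are legitimate. No other real obstacle arises.
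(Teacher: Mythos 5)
Your proof is correct and follows essentially the same route as the paper's: an orthogonal (Pythagorean) decomposition around $\oll{\pi}$ killing the cross term, followed by a term-by-term expansion of the residual using the disintegration $\pi(\dd x,\dd y)=\pi_x(\dd y)\mu(\dd x)$. The only difference is cosmetic — the paper phrases the vanishing of the cross term via the $L^2$-orthogonality of the conditional expectation rather than computing the inner integral against $\pi_x$ directly — and your added remark on integrability is a small bonus the paper leaves implicit.
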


\begin{proof}
	Denote  $J$ the left-hand-side of \cref{eqn:snb_cost_reformulation}, and
	compute (taking the expectation under $(X,Y)\sim\pi$)
	\begin{align*}
		J &= \E{\|Y-f(X)\|_2^2} 
		= \E{\|Y - \oll{\pi}(X) + \oll{\pi}(X) + f(X)\|_2^2} \\
		&= \E{\|Y - \oll{\pi}(X)\|_2^2} + \E{\|\oll{\pi}(X) + f(X)\|_2^2} + 2\E{(Y - \oll{\pi}(X))^T(\oll{\pi}(X) + f(X))},
	\end{align*}
	then since $\oll{\pi}(X)$ is the orthogonal projection of $Y$ onto the set
	of random variables that are functions of $X$, the inner product $\E{(Y -
	\oll{\pi}(X))^T(\oll{\pi}(X) + f(X))}$ is zero, yielding
	\cref{eqn:snb_cost_reformulation}.

        We can expand the norm in the second term of the right-hand side of
	\cref{eqn:snb_cost_reformulation} using $m_2(\nu)$ the second moment of
	$\nu$ and get
	\begin{align*}
		\Int{\X\times \R^d}{}\|y-\oll{\pi}(x)\|_2^2\dd\pi(x,y)
          &= m_2(\nu) -  2\Int{\X\times \R^d}{}y\cdot \oll{\pi}(x)\dd\pi(x,y) + \Int{\X}{}\|\oll{\pi}(x)\|_2^2\dd\mu(x).
	\end{align*}
	Writing the disintegration of $\pi$ w.r.t. $\mu$ as $\pi(\dd x, \dd y) =
	\pi_x(\dd y)\mu(\dd x)$, we re-write the second term as
	$$\int_{\X\times \R^d}y\cdot \oll{\pi}(x)\dd\pi(x,y) =
	\int_{\X}\oll{\pi}(x)\cdot \left(\int_{\R^d}y\dd\pi_x(y)\right) \dd\mu(x)
	= \Int{\X}{}\oll{\pi}(x)\cdot \oll{\pi}(x)\dd\mu(x) =
	m_2(\oll{\pi}\#\mu).$$ Putting our computations together yields
	$$\Int{\X\times \R^d}{}\|y-\oll{\pi}(x)\|_2^2\dd\pi(x,y) = m_2(\nu) -
	m_2(\oll{\pi}\#\mu).$$
\end{proof}
\begin{remark}[Ties to the Convex Least Squares
Estimator~\cite{manole2021plugin}.]
In \cite{manole2021plugin}, Manole et al. study the statistical properties of
various estimators of Optimal Transport maps, assuming some regularity on the
input distributions. Specifically, they introduce the so-called Convex Least
Squares Estimator: given $\hat\mu_n := \frac{1}{n}\sum_{i=1}^n \delta_{x_i}$
with the $(x_i)$ being i.i.d. samples of $\mu$ and $\hat\nu_m :=
\frac{1}{m}\sum_{j=1}^m \delta_{y_j}$ with the $(y_j)$ i.i.d. samples of $\nu$,
the estimator is defined as
\begin{equation}\label{eqn:cvx_least_squares_estimator}
	\hat T_{n,m} = \nabla \hat\pot,\quad \hat\pot \in \underset{\pot \in \Phi_\gradlip}{\argmin}\ \Sum{i=1}{n}\Sum{j=1}{m}\hat{\pi}^*_{i,j}\|\nabla\varphi(x_i) - y_j\|_2^2,
\end{equation}
where $\hat\pi^*$ is an optimal transport plan between $\hat\mu_n$ and
$\hat\nu_m$, and where $\Phi_\gradlip$ is the set of $\mathcal{C}^1$ convex
functions from $\Omega\subset \R^d$ to $\R$ with a $\gradlip$-Lipschitz
gradient.  
Notice that~\cref{eqn:cvx_least_squares_estimator} is a Constrained Barycentric
Projection problem \cref{eqn:prob_L2_pi_fixed} with a specific (discrete)
transport plan $\hat\pi^*$, chosen to be the optimal transport plan between
$\hat\mu_n$ and $\hat\nu_m$, and with the particular class $G := \Funs$
(introduced in \cref{sec:class_grad_convex}).  
\end{remark}

\subsection{Equivalence to a Constrained Barycentric Projection in Dimension
1}\label{sec:1D}

In this section, we shall prove that the Constrained Approximate Transport Map
problem (\cref{eqn:CATM}) is equivalent to the Constrained Barycentric
Projection Problem (\cref{eqn:prob_L2_pi_fixed}) for the quadratic cost in
dimension $1$. This provides a positive answer to the question raised in
\cref{eqn:alternate_equivalence_question} in this particular case. The idea
behind this equivalence stems from the fact that in dimension one, optimal
transport maps are non-decreasing, and the composition of two optimal transport
maps remains an optimal transport map. 

\begin{prop}\label{prop:1d_equiv_proj} For $\mu, \nu \in \mathcal{P}_2(\R)$, and
	$G$ a subclass of the non-decreasing functions $g: \R\longrightarrow \R$
	such that $g\#\mu\in\mathcal{P}_2(\R)$, we have the equality
	\begin{equation}\label{eqn:1d_equiv_proj}
		\underset{g\in G}{\argmin}\ \W_2^2(g\#\mu, \nu) = \underset{g\in
		G}{\argmin}\ \|g - \oll{\pi^*}\|_{L^2(\mu)}^2,
	\end{equation}
	where $\pi^*$ is an optimal transport plan between $\mu$ and $\nu$ for the
	squared Euclidean cost.
\end{prop}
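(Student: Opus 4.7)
The plan is to establish the pointwise identity
$$\W_2^2(g\#\mu,\nu) \;=\; \int_{\R\times\R}(g(x)-y)^2\dd\pi^*(x,y)$$
for every non-decreasing $g \in G$, which immediately yields equality of the two minimisation problems over $G$, and then to invoke \cref{prop:constrained_bar_L2_proj_on_barycentric_projection} to rewrite the right-hand side as the $L^2(\mu)$-projection of $\oll{\pi^*}$ onto $G$. The key one-dimensional fact I would use is that a coupling in $\Pi(\rho_1,\rho_2)$ is $\W_2^2$-optimal as soon as its support is \emph{monotone} in $\R^2$, meaning it contains no pair $(a_1,b_1),(a_2,b_2)$ with $a_1 < a_2$ and $b_1 > b_2$ (a classical 1D result for the quadratic cost, see \cite{santambrogio2015optimal}).

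I would fix $g \in G$ and set $\tilde\pi := (g,\mathrm{id}_\R)\#\pi^* \in \Pi(g\#\mu,\nu)$, whose finiteness of second moment is guaranteed by $g\#\mu,\nu\in\mathcal{P}_2(\R)$, and then verify that $\supp(\tilde\pi)$ is monotone. Since $\supp(\tilde\pi) \subset \overline{E}$ with $E := \{(g(x),y) : (x,y)\in \supp(\pi^*)\}$, and the closure of a monotone subset of $\R^2$ is again monotone (straightforward limit argument), it suffices to check monotonicity of $E$. If $(g(x_1),y_1),(g(x_2),y_2) \in E$ with $g(x_1) < g(x_2)$, then since $g$ is non-decreasing we must have $x_1 < x_2$ (otherwise $g(x_2)\le g(x_1)$); the monotonicity of $\supp(\pi^*)$ (optimality of $\pi^*$ in 1D) then gives $y_1 \le y_2$. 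Thus $\tilde\pi$ is $\W_2^2$-optimal between $g\#\mu$ and $\nu$, and applying the change of variables to $\tilde\pi$ yields the claimed identity.

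Finally, I would apply \cref{prop:constrained_bar_L2_proj_on_barycentric_projection} with the fixed plan $\pi^*$: the cost $\int (g(x)-y)^2\dd\pi^*(x,y)$ differs from $\|g - \oll{\pi^*}\|_{L^2(\mu)}^2$ by the quantity $m_2(\nu) - m_2(\oll{\pi^*}\#\mu)$, which does not depend on $g$, so both minimisations share the same argmin set. The main obstacle I anticipate is the rigorous monotonicity check for $\supp(\tilde\pi)$: one has to be attentive to possible discontinuities and flat pieces of $g$ (the latter produce harmless ties $g(x_1)=g(x_2)$ for which the monotonicity condition is vacuous) and to propagating monotonicity through the closure step in $\R^2$; once this is done, the rest of the argument is mere assembly of results already at hand.
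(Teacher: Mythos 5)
Your argument is correct and reaches the same pivotal identity as the paper, namely $\W_2^2(g\#\mu,\nu)=\int_{\R^2}(g(x)-y)^2\dd\pi^*(x,y)$ for every non-decreasing $g\in G$, but by a genuinely different route. The paper obtains this identity through quantile functions: it writes $\W_2^2(g\#\mu,\nu)=\int_0^1|\rinv{F_{g\#\mu}}(p)-\rinv{F_\nu}(p)|^2\dd p$ (\cite{santambrogio2015optimal}, Proposition 2.17) and then invokes \cref{lemma:push_forward_increasing_quantile}, the almost-everywhere change-of-variables formula $\rinv{F_{g\#\mu}}=g\circ\rinv{F_\mu}$, whose careful proof via pseudo-inverses occupies \cref{appendix:quantile_inverses}; the optimal plan is then recognised as $(\rinv{F_\mu},\rinv{F_\nu})\#\Leb_{[0,1]}$. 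You instead push the optimal plan forward, setting $\tilde\pi:=(g,\mathrm{id})\#\pi^*\in\Pi(g\#\mu,\nu)$, and certify its optimality by checking that its support is monotone: a non-decreasing $g$ cannot break the monotonicity inherited from $\supp(\pi^*)$, ties $g(x_1)=g(x_2)$ are vacuous for the monotonicity condition, and monotonicity survives the passage to the closure. Both proofs then conclude identically via \cref{prop:constrained_bar_L2_proj_on_barycentric_projection}. Your route trades the pseudo-inverse machinery of the appendix for the support-monotonicity characterisation of one-dimensional optimality (essentially \cite{santambrogio2015optimal}, Theorem 2.9, which the paper cites anyway to obtain uniqueness of $\pi^*$), and is arguably more self-contained at the level of this proposition. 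In a final write-up you should make explicit that (a) a non-decreasing $g:\R\to\R$ is Borel measurable, so $\tilde\pi$ is well defined, and (b) the assumptions $\mu,\nu\in\mathcal{P}_2(\R)$ and $g\#\mu\in\mathcal{P}_2(\R)$ guarantee finiteness of the transport cost, which is what allows you to pass from monotonicity of the support of $\tilde\pi$ to its $\W_2^2$-optimality.
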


\cref{prop:1d_equiv_proj} generalises \cite{paty2020regularity}
Proposition 1, which proves the same equivalence for a specific class of
functions $G$, and assuming $\mu$ to be either discrete or absolutely continuous
with respect to the Lebesgue measure.

The proof of \cref{prop:1d_equiv_proj} hinges on
\cref{lemma:push_forward_increasing_quantile}, which is intuitive in the
absolutely continuous or discrete case, but a bit more technical in full
generality. We write below the cumulative distribution function of a probability
measure $\rho$ as $F_\rho:= x \longmapsto \rho((-\infty, x])$. Since it is
non-decreasing, we can define its \textbf{right-inverse} as (using the notation
$\oll{\R}:= \R\cup\{-\infty, +\infty\}$):
$$\rinv{F_\rho}: \R \longrightarrow \oll{\R}: \quad\forall p \in \R,\;
\rinv{F_\rho}(p) := \inf\left.\left\{x \in \R\  \right|\ F_\rho (x)\geq
p\right\}.$$
\begin{lemma}\label{lemma:push_forward_increasing_quantile} Let $\mu \in
	\mathcal{P}(\R)$ and $g:\R\longrightarrow\R$ be a non-decreasing function,
	we have the following almost-everywhere change of variables formula for the
	quantile functions of $g\#\mu$ and $\mu$:
	$$\rinv{F_{g\#\mu}} = g \circ \rinv{F_\mu},\quad \Leb_{[0,
	1]}\text{-almost-everywhere}.$$
\end{lemma}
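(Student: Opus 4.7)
The plan is to establish a one-sided inequality $g\circ\rinv{F_\mu}\geq \rinv{F_{g\#\mu}}$ holding pointwise on $(0,1)$, then upgrade it to almost-everywhere equality by a distributional argument exploiting that both functions push $\Leb_{[0,1]}$ forward to $g\#\mu$.

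First I would fix $p\in(0,1)$ and set $x_p:=\rinv{F_\mu}(p)$, $y_p:=\rinv{F_{g\#\mu}}(p)$, aiming to prove $y_p\leq g(x_p)$. By right-continuity of $F_\mu$ the infimum defining $x_p$ is attained, so $F_\mu(x_p)\geq p$. Monotonicity of $g$ yields the inclusion $(-\infty,x_p]\subset\{x\in\R : g(x)\leq g(x_p)\}$, hence
\[
F_{g\#\mu}(g(x_p))=\mu(\{x : g(x)\leq g(x_p)\})\geq F_\mu(x_p)\geq p,
\]
and $y_p\leq g(x_p)$ follows from the definition of the right-inverse.

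Next I would invoke the standard inverse-CDF representation: for $U\sim\mathcal{U}([0,1])$ one has $\rinv{F_\mu}(U)\sim \mu$, whence $g(\rinv{F_\mu}(U))\sim g\#\mu$; applied to $g\#\mu$ itself the same fact gives $\rinv{F_{g\#\mu}}(U)\sim g\#\mu$. Thus $X:=g(\rinv{F_\mu}(U))$ and $Y:=\rinv{F_{g\#\mu}}(U)$ share the same law while, by the previous step, $X\geq Y$ almost surely. To conclude I would use the elementary observation that $X\geq Y$ a.s.\ combined with $X\stackrel{d}{=}Y$ forces $X=Y$ a.s.: for each $t\in\R$, $\{X\leq t\}\subset\{Y\leq t\}$ and equality of CDFs give $\P(Y\leq t<X)=0$, and a countable union over $t\in\Q$ then yields $\P(X>Y)=0$. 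Unwinding the definitions of $X$ and $Y$ produces exactly the claimed $\Leb_{[0,1]}$-a.e.\ equality.

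The subtle point is the inequality in the first step, where atoms of $\mu$ and plateaus of $F_\mu$ have to be accommodated via right-continuity of $F_\mu$; this same asymmetry between the right-inverse convention and the merely non-decreasing map $g$ is what prevents pointwise equality, as a simple example with $\mu=\mathcal{U}([0,1])$ and $g$ piecewise constant demonstrates.
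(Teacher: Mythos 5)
Your argument is correct, and while your first step (the pointwise inequality $\rinv{F_{g\#\mu}}\leq g\circ\rinv{F_\mu}$ on $(0,1)$) coincides with the paper's, your treatment of the converse inequality is genuinely different. The paper proceeds analytically: it introduces the left-continuous distribution function $G_\mu(x):=\mu((-\infty,x))$, shows via a chain of inequalities that for any $p$ in the exceptional set $N$ the function $F_{g\#\mu}$ is constant equal to $p$ on the whole interval $[\rinv{F_{g\#\mu}}(p),\,g\circ\rinv{F_\mu}(p))$, deduces that $\rinv{F_{g\#\mu}}$ has a jump discontinuity at every such $p$, and concludes that $N$ is countable. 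You instead observe that $X:=g\circ\rinv{F_\mu}(U)$ and $Y:=\rinv{F_{g\#\mu}}(U)$ are both distributed as $g\#\mu$ when $U\sim\mathcal{U}([0,1])$, dominate one another, and hence must coincide almost surely; your CDF-based proof of the elementary fact "$X\geq Y$ a.s.\ and $X\stackrel{d}{=}Y$ imply $X=Y$ a.s." correctly avoids any integrability assumption on $g\#\mu$. Your route is shorter and sidesteps the left-inverse machinery of the paper's auxiliary lemma entirely (you only need the standard facts that $F_\mu(\rinv{F_\mu}(p))\geq p$ for $p\in(0,1)$ and that $\rinv{F_\rho}\#\Leb_{[0,1]}=\rho$, the latter following from the equivalence $F_\rho(x)\geq p\Leftrightarrow x\geq\rinv{F_\rho}(p)$). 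What the paper's longer argument buys is slightly finer information: the exceptional set is identified as a subset of the jump discontinuities of $\rinv{F_{g\#\mu}}$ and is therefore countable, not merely Lebesgue-null; but for the statement as given, your proof is complete.
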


\begin{proof}
	The proof is provided in \cref{appendix:quantile_inverses}. 
\end{proof}

\begin{proof}[Proof of \cref{prop:1d_equiv_proj} ]
	Let $g\in G$. By \cite{santambrogio2015optimal} Proposition 2.17 and by
	\cref{lemma:push_forward_increasing_quantile} successively, we have
	\begin{align*}
		\W_2^2(g\#\mu, \nu) &=\int_0^1|\rinv{F_{g\#\mu}}(p)-\rinv{F_\nu}(p)|^2\dd p = \int_0^1|g\circ \rinv{F_\mu}(p) - \rinv{F_\nu}(p)|^2\dd p 
		= \int_{\R^2}|g(x)-y|^2\dd\pi(x,y),
	\end{align*}
	where $\pi := (\rinv{F_\mu}, \rinv{F_\nu})\#\Leb_{[0,1]}$, which by
	\cite{santambrogio2015optimal} Theorem 2.9 is the unique optimal plan
	between $\mu$ and $\nu$ for the squared Euclidean cost. We apply
	\cref{prop:constrained_bar_L2_proj_on_barycentric_projection}, which yields
	$$\W_2^2(g\#\mu, \nu) = \int_{\R^2}|g(x)-y|^2\dd\pi(x,y) =
	\int_{\R}|g(x)-\oll{\pi}(x)|^2\dd\mu(x) + m_2(\nu) - m_2(\oll{\pi}\#\mu).$$
	Given the expression of the right-hand-side above, we conclude that
	$$\underset{g\in G}{\argmin}\ \W_2^2(g\#\mu, \nu) = \underset{g\in
	G}{\argmin}\ \|g - \oll{\pi^*}\|_{L^2(\mu)}^2,$$ (for any optimal transport
	plan $\pi^*$ between $\mu$ and $\nu$ for the squared Euclidean cost, and we
	have even remarked that such a plan is in fact unique) since the costs are
	equal up to a constant independent of $g$.
\end{proof}

\subsection{Counter-Example to Equivalence to Constrained Barycentric Projection
in Dimension 2}\label{sec:CE_2d}

In this section, we provide a negative example to the question formulated in
\cref{eqn:alternate_equivalence_question}, namely that
$$\underset{g\in G}{\argmin}\ \W_2^2(g\#\mu, \nu) \neq \underset{g\in
G}{\argmin}\ \|g - \oll{\pi^*}\|_{L^2(\mu)},$$ where $\pi^*$ is an optimal
transport plan (for the squared Euclidean cost) between $\mu$ and $\nu$, in
dimension $d\geq 2$. We take $G$ to be the class of \textit{monotone} continuous
functions $g: \R^2 \longrightarrow \R^2$, which is to say that
$$\forall x,y\in \R^2,\; \langle g(x) - g(y), x - y\rangle \geq 0. $$ Note that
gradients of convex functions are monotone, but the converse does not hold.
For $(a,b,x)\in (0,+\infty)^3$, we consider the following measures:
$$\mu := \frac{2}{3}\delta_{(0, 0)} + \frac{1}{3}\delta_{(x, 0)} \text{ and }
 \nu := \frac{2}{3}\delta_{(0, 0)} + \frac{1}{3}\delta_{(-a, b)}.$$ 
There is a unique optimal transport plan  $\pi^*$ between $\mu$ and $\nu$, given
by
$$\pi^* = \frac{1}{3}\delta_{(0, 0)\otimes (0, 0)} + \frac{1}{3}\delta_{(0,
0)\otimes(-a,b)} + \frac{1}{3}\delta_{(x, 0)\otimes(0, 0)}. $$ Its barycentric
projection is characterised by the following equation
$$\oll{\pi^*}(0, 0)=(-a/2, b/2) \text{ and } \oll{\pi^*}(x, 0) = (0, 0).$$ We
now consider the problem $\underset{g\in G}{\min}\|g-\oll{\pi^*}\|_{L^2(\mu)}$.
A solution of this problem is characterised by its values on the support of
$\mu$, and one may reduce the problem to an optimisation over $g(0, 0)$ and
$g(x, 0)$, with the monotonicity constraint $\langle g(0, 0) - g(x, 0), (0, 0) -
(x, 0) \rangle \geq 0$. Since $\oll{\pi^*}$ itself verifies this condition, it
is the only solution (in the sense of $L^2(\mu)$). We conclude
$$\underset{g\in G}{\argmin}\ \|g - \oll{\pi^*}\|_{L^2(\mu)}^2 =
\left\lbrace\oll{\pi^*} \right\rbrace. $$ We now show that the problem
$\underset{g \in G}{\argmin}\ \W_2^2(g\#\mu, \nu)$ has a different solution set.
First, we compute 
$$\W_2^2(\oll{\pi^*}\#\mu, \nu)=\frac{a^2+b^2}{6}.$$ However, if we introduce
$g\in G$ such that $g(0, 0)=(0, 0)$ and $g(x, 0)=(0, b)$, we have
$$\W_2^2(g\#\mu, \nu) = \frac{a^2}{3}. $$ For instance, $(a,b,x) := (1, 10, 1)$
yields 
$$\W_2^2(\oll{\pi^*}\#\mu, \nu)=\frac{a^2+b^2}{6} = \frac{101}{6} >
\W_2^2(g\#\mu, \nu)=\frac{a^2}{3} = \frac{1}{3}.$$ We illustrate the point
configurations for $(a, b, x) := (1, 3, 1)$ in \cref{fig:CE_2d}.
\begin{figure}[ht]
	\centering
	\includegraphics[width=.6\linewidth]{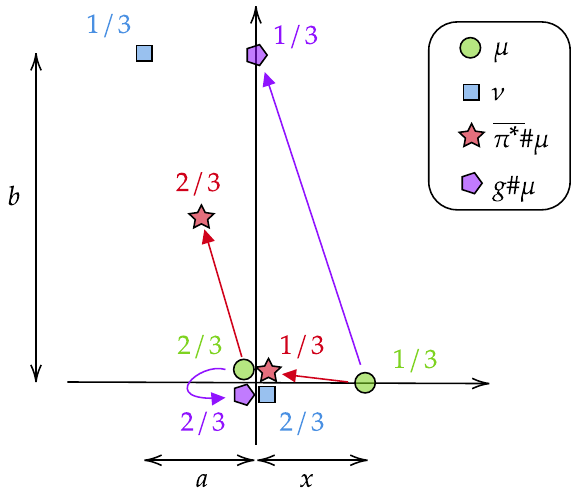}
	\caption{Illustration of the two-dimensional counter-example
          to the equivalence of the map problem to the $L^2$
          projection of the barycentric projection. The four points
          close to $(0, 0)$ are represented with an offset for
          legibility, and represent four points equal to $(0, 0)$
          exactly.} 
	\label{fig:CE_2d}
\end{figure}

\section{Discrete Measures and Numerical Methods}\label{sec:discrete}

In this section, we consider some numerical methods to solve the approximate map
problem for some specific function classes. \blue{To prepare for convergence
results, we dedicate \cref{sec:semi_algebraic} to regularity properties of the
transport cost in the discrete case.} In \cref{sec:numerics_grad_convex}, we
present methods in the case where $G$ is the class of $\gradlip$-Lipschitz
gradients of $\strcvx$-strongly convex potentials (presented in
\cref{sec:class_grad_convex}). For the squared Euclidean cost, these methods
were introduced in \cite{paty2020regularity}, using convex interpolation results
from \cite{taylor2017convex}. \blue{In \cref{sec:kernels}, we consider a simple
kernel method which solves a regularised version of \cref{eqn:CATM}. This type
of method hinges on the fact that kernel methods yield a finite-dimensional
parametrisation of the function $g$, and allows for provably convergent gradient
descent methods. In \cref{sec:sgd_nn_map}, we consider a Stochastic Gradient
Descent method for the case where the map $g$ is a Neural Network. Finally, in
\cref{sec:colour_transfer}, we illustrate the use of the methods presented in
this section on the problem of colour transfer.}

\blue{\subsection{Regularity of Discrete Optimal Transport
Costs}\label{sec:semi_algebraic}

To study the convergence of sub-gradient descent methods theoretically, we will
introduce standard notions from non-smooth non-convex analysis, in particular a
specific generalisation of sub-gradients, which are in practice computed by
automatic differentiation. A central notion in this analysis will be the notion
of semi-algebraicity, which we remind in \cref{def:semi_algebraic} (and refer to
\cite{Wakabayashi_semialgebraic} and \cite{bolte2021conservative} for more
details).

\begin{definition}\label{def:semi_algebraic} A set $S \subset \R^d$ is said to
    be semi-algebraic if it can be written under the form $S =
    \cup_{n=1}^N\cap_{m=1}^M S_{n,m}$, where each $S_{n,m}$ is either of the
    form $\{x\in \R^d: p_{n,m}(x) = 0\}$ or $\{x\in \R^d: p_{n,m}(x) \geq 0\}$,
    where $p_{n,m}$ is a $d$-variate polynomial with real coefficients.

    A function $f: \R^{d_1} \longrightarrow \R^{d_2}$ is semi-algebraic if its
    graph $\{(x, f(x)) : x \in \R^{d_1}\}$ is a semi-algebraic set.

    A multifunction $f: \R^{d_1} \rightrightarrows \R^{d_2}$ is semi-algebraic
    if its graph $\{\{x\}\times f(x) : x \in \R^{d_1}\}$ is a semi-algebraic
    set.
\end{definition}

Another central notion will be a generalisation of the notion of gradient for
locally Lipschitz functions called the Clarke differential.

\begin{definition}\label{def:Clarke} Given a locally Lipschitz function $f:
    \R^{d} \longrightarrow \R$, its Clarke sub-gradient at $x \in \R^{d}$ is the
    set
    $$\partial_C f(x) = \conv\left\{\underset{t \longrightarrow
    +\infty}{\lim}\nabla f(x_t) : x_t \xrightarrow[t\longrightarrow +\infty]{}
    x,\; x_t \in D_f\right\},$$ where $D_f$ is the set of differentiability of
    $f$ and $\conv$ denotes the convex envelope. 
\end{definition}

In \cref{prop:discrete_kanto_clarke_semi_algebraic}, we show that the discrete
OT cost is semi-algebraic and locally Lipschitz as a function of the cost
matrix, and that its Clarke sub-gradient is itself semi-algebraic.

\begin{prop}\label{prop:discrete_kanto_clarke_semi_algebraic} Consider weights
    $a \in \Delta_n$ (the $n$-simplex) and $b\in \Delta_m$ (the $m$-simplex),
    and the discrete Kantorovich cost function
    $$W(a, b, \cdot) := \app{\R^{n\times m}}{\R}{M}{\underset{\pi \in
    \Pi(a,b)}{\min}\ \pi \cdot M}. $$ Then the map $W(a, b, \cdot)$ is
    semi-algebraic, Lipschitz, and its Clarke sub-gradient is semi-algebraic and
    writes for $M \in \R^{n\times m}$:
    $$\partial_C W(a, b, \cdot)(M) = \underset{\pi \in \Pi(a,b)}{\argmin}\
    \pi\cdot M. $$
\end{prop}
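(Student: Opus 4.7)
The plan is to exploit the polyhedral structure of $\Pi(a,b)$. As a subset of $\R^{n \times m}$, the transport polytope $\Pi(a,b)$ is a non-empty compact polytope with a finite vertex set $V = \{v_1, \ldots, v_K\}$. For every fixed $M$, the map $\pi \mapsto \pi \cdot M$ is linear over this polytope, so by the fundamental theorem of linear programming its minimum is attained at a vertex, yielding
$$W(a, b, M) = \min_{k \in \llbracket 1, K \rrbracket} v_k \cdot M,$$
a pointwise minimum of finitely many functions that are linear in $M$. All three conclusions of the proposition follow from this representation.

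From this finite-min-of-affine formula, semi-algebraicity and Lipschitzness are immediate. The graph of $W(a,b,\cdot)$ can be written as $\{(M,t):\exists k,\ v_k\cdot M=t\ \text{and}\ \forall j,\ v_j\cdot M \geq t\}$, which is a finite union of sets cut out by linear equalities and inequalities, hence semi-algebraic. Each linear map $M \mapsto v_k\cdot M$ is $\|v_k\|_F$-Lipschitz, and since vertices of $\Pi(a,b)$ lie in $[0,1]^{n\times m}$, these Lipschitz constants are uniformly bounded; the pointwise minimum $W(a,b,\cdot)$ is then Lipschitz with the same bound.

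For the Clarke sub-gradient, I would apply the standard Clarke calculus rule for a pointwise minimum of finitely many $\mathcal{C}^1$ functions, which gives $\partial_C W(a,b,\cdot)(M) \subset \conv\{\nabla f_{v_k}(M) : k \in I(M)\}$, where $f_{v_k}(M) := v_k \cdot M$ and $I(M) := \{k : v_k \cdot M = W(a,b,M)\}$ is the active index set. The reverse inclusion holds because each $f_{v_k}$ is affine, so $W(a,b,\cdot)$ is concave and piecewise affine, and its Clarke differential coincides with its super-differential in the sense of convex analysis, which is exactly $\conv\{v_k : k \in I(M)\}$. By basic LP theory, this convex hull is precisely the face of $\Pi(a,b)$ on which $\pi \mapsto \pi \cdot M$ is minimised, which coincides with $\argmin_{\pi \in \Pi(a,b)}\ \pi\cdot M$, giving the claimed identity.

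Finally, for the semi-algebraicity of the multifunction $M \mapsto \partial_C W(a,b,\cdot)(M)$, I would write its graph directly as
$$\{(M,\pi) \in \R^{n\times m}\times\R^{n\times m} : \pi \in \Pi(a,b) \text{ and } \pi \cdot M \leq v_k \cdot M\ \text{for all}\ k \in \llbracket 1, K\rrbracket\},$$
which is a finite conjunction of linear (hence polynomial) inequalities in $(M,\pi)$, therefore semi-algebraic. The main obstacle is the equality in the Clarke formula: the inclusion is immediate from standard Clarke calculus, and the reverse requires invoking the concave piecewise-affine structure of $W(a,b,\cdot)$ to identify the Clarke differential with the convex-analytic super-differential, after which the translation into the argmin set is purely combinatorial.
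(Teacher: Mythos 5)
Your proof is correct, and it shares the paper's starting point — writing $W(a,b,\cdot)$ as a finite minimum of the linear maps $M \mapsto v_k\cdot M$ over the vertices of $\Pi(a,b)$, from which semi-algebraicity and Lipschitzness are read off exactly as in the paper. Where you diverge is the justification of the Clarke sub-gradient identity: the paper dispatches it in one line by invoking Danskin's theorem, whereas you derive it from the piecewise-affine concave structure, identifying $\partial_C W(a,b,\cdot)(M)$ with the convex-analytic super-differential $\conv\{v_k : k\in I(M)\}$ and then with the optimal face of the polytope, i.e.\ $\argmin_{\pi\in\Pi(a,b)}\pi\cdot M$. This is a legitimate and more self-contained route (it needs only the standard fact that the Clarke differential of a concave locally Lipschitz function is its super-differential, plus the fact that the optimal set of a bounded LP is the convex hull of its optimal vertices), at the cost of being longer than the Danskin one-liner. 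For the semi-algebraicity of the multifunction you write the graph explicitly, which is precisely the ``extremal point method'' the paper mentions as an alternative to citing the general theorem that Clarke differentials of semi-algebraic locally Lipschitz functions are semi-algebraic; one small correction there is that the inequalities $\pi\cdot M \leq v_k\cdot M$ are bilinear, not linear, in the joint variable $(M,\pi)$ — they are still polynomial, so the conclusion is unaffected.
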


\begin{proof}
    Writing the extremal points of the polytope $\Pi(a,b)$ as $(\pi_i)_{i=1}^N$,
    the function $W(a, b, \cdot)$ can be written as a finite minimisation over
    $\pi \in (\pi_i)_{i=1}^N$ of linear functions of $M$, hence $W(a, b, \cdot)$
    is semi-algebraic. The fact that $W(a, b, \cdot)$ is Lipschitz is also its
    consequence of its expression as a minimum of a finite amount of linear
    maps. The expression of sub-gradients of $W(a, b, \cdot)$ is a consequence
    of Danskin's Theorem \cite{danskin1966theory}. Finally, the semi-algebraic
    property of $\partial_C W(a, b, \cdot)$ is a consequence of the fact that
    $W(a, b, \cdot)$ is semi-algebraic and locally Lipschitz, or can
    alternatively be seen using the extremal point method as done for $W(a, b,
    \cdot)$.
\end{proof}

In \cref{lemma:semi_algebraic_props} we remind some useful properties of
semi-algebraic maps that we will use later on. In particular, semi-algebraic
maps are \textit{generalised differentiable} (see \cite{ermoliev1997stochastic}
Definition 3.1), which can be understood as a generalised first-order Taylor
expansion.

\begin{lemma}\label{lemma:semi_algebraic_props} Any locally Lipschitz
    semi-algebraic map $f: \R^d \longrightarrow \R$ is generalised
    differentiable (see \cite{ermoliev1997stochastic} Definition 3.1) and its
    set of critical values $f\{x \in \R^{d} : 0 \in\partial_C f(x)\}$ is finite.
\end{lemma}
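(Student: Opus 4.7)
The plan is to handle the two conclusions of the lemma---generalised differentiability, and finiteness of the set of critical values---separately, each relying on a standard result from semi-algebraic geometry and non-smooth analysis.

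For generalised differentiability, I would start by recalling the classical fact (Clarke) that for any locally Lipschitz $f:\R^d \longrightarrow \R$, the Clarke sub-gradient $\partial_C f$ is non-empty, convex, compact-valued, and upper semi-continuous. The remaining requirement of the Ermoliev--Norkin definition is the first-order asymptotic expansion: for every $x_0 \in \R^d$, every sequence $x_n \longrightarrow x_0$, and every selection $g_n \in \partial_C f(x_n)$, one must have
\[
f(x_n) = f(x_0) + \langle g_n, x_n - x_0\rangle + o(\|x_n - x_0\|_2).
\]
This is the point where the semi-algebraic hypothesis really enters. I would invoke the projection formula for Clarke sub-gradients on Whitney-stratifiable functions (Bolte--Daniilidis--Lewis), applied to the canonical Whitney stratification of the graph of $f$ which exists because $f$ is semi-algebraic; along strata the function is $\mathcal{C}^1$-smooth with gradients lying in $\partial_C f$, and the projection formula bridges the strata to yield the desired Taylor-like expansion.

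For the second assertion, the first step is to argue that the critical set $\mathrm{Crit}(f) := \{x \in \R^d : 0 \in \partial_C f(x)\}$ is semi-algebraic. This uses the Tarski--Seidenberg principle: the differentiability set $D_f$ is definable, $\nabla f$ is definable on it, and taking limits of gradients and convex hulls preserves the semi-algebraic class, so $\partial_C f$ is a semi-algebraic multifunction. By Tarski--Seidenberg applied again, the image $f(\mathrm{Crit}(f)) \subset \R$ is a semi-algebraic set. I would then apply the non-smooth semi-algebraic Sard theorem (Bolte--Daniilidis--Lewis, or Kurdyka--Orro--Simon), which asserts that the set of Clarke critical values of a locally Lipschitz semi-algebraic function has Lebesgue measure zero. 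Finally, I would conclude using the structure theorem for semi-algebraic subsets of $\R$: every such set is a finite union of points and open intervals; since $f(\mathrm{Crit}(f))$ cannot contain an interval without contradicting measure zero, it must reduce to a finite set of points.

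The main obstacle---or rather the deepest external input I would lean on---is the non-smooth semi-algebraic Sard theorem, which is a non-trivial but by now standard result. Both it and the projection formula required in the first part rest on the existence of Whitney stratifications of semi-algebraic sets, so technically I would set up such a stratification of the graph of $f$ once at the start and use it as the common tool throughout the argument.
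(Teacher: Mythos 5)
Your proposal is correct and follows essentially the same route as the paper: the paper cites the result that locally Lipschitz semi-algebraic (tame) functions are semismooth — which is proved via exactly the Whitney-stratification projection formula you invoke — and then that semismoothness gives generalised differentiability in the Ermoliev--Norkin sense, while for the second claim it cites the definable Morse--Sard theorem directly. Your version merely unpacks these citations (deriving finiteness from semi-algebraicity of the critical value set plus measure-zero Sard plus the finite cell structure of semi-algebraic subsets of $\R$, rather than quoting finiteness outright), which is a valid and equivalent way to assemble the same external inputs.
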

\begin{proof}    
    Since $f$ is semi-algebraic and locally Lipschitz, by \cite{bolte2009tame}
    Theorem 3.6 it is semi-smooth, which in turn implies generalised
    differentiability (by \cite{mikhalevich2024methods} Theorem 1.4). Next, by
    definable Morse-Sard (from \cite{bolte2021conservative} Theorem 5), the set
    of critical values $f\{x \in \R^{d} : 0 \in\partial_C f(x)\}$ is finite. 
\end{proof}
} \subsection{Numerical Method for Gradients of Convex
Functions}\label{sec:numerics_grad_convex}

In this section, we present numerical methods to solve the approximate problem
in the case of the function class $\Funs$ of functions $g: \R^d \longrightarrow
\R^d$ that is $\gradlip$-Lipschitz and gradient of an $\strcvx$-strongly convex
function $\pot \in \mathcal{C}^1(\R^d, \R)$, on each part $E_k$ of the fixed
partition $\mathcal{E}$.  We already introduced this class in
\cref{sec:class_grad_convex}, and it was first considered in the context of map
problems by \cite{paty2020regularity}. The numerical methods will aim to solve
the problem
\begin{equation}\label{eqn:SSNB_Tc}
	\underset{\pot \in \Funs}{\argmin}\ \Tc(g\#\mu, \nu),
\end{equation} 
with a particular emphasis on the case where $c$ is quadratic, i.e. $c(x, y) =
(x-y)^TQ(x-y) + b^T(x-y)$, where $Q\in S_d^+(\R)$ is a positive-semi-definite
matrix, and $b\in \R^d$. For our numerical questions, we consider the discrete
case
$$\mu = \sum_{i=1}^na_i\delta_{x_i},\quad \nu = \sum_{j=1}^mb_j\delta_{y_j}. $$

Obviously, we need to assume that the measure $\mu$ is compatible with the
partition, which is to say the the $x_i$ are never at the boundary of a part
$E_k$: $\forall i \in \llbracket 1, n \rrbracket,\; x_i \in \left(\cup_k
\partial E_k\right)^c$. The objective in \cref{eqn:SSNB_Tc} only depends on the
values $\pot_i := \pot(x_i)$ and $g_i := g(x_i)$, the immediate question is that
given a candidate $(\pot_i, g_i) \in (\R\times\R^d)^n$, does there exist a
function $g\in \Funs$ of the form $\nabla\pot$ which interpolates these values,
i.e. $g(x_i)=g_i$ and $\pot(x_i)=\pot_i$? This question, which is called
$\Funs$-interpolation, was studied by Taylor
\cite{taylor2017convex}\footnote{Note that (\cite{taylor2017convex}, Theorem
3.14) writes an erroneous $\argmin$ for $\pot_u$: in the light of
(\cite{taylor2017convex}, Remark 3.13), it should instead read $\argmax$,
especially given the fact that the minimisation problem is unbounded.}. We write
$\FunsRd := \Funs$ in the case $\mathcal{E} = \lbrace \R^d \rbrace$, and present
the results in the restricted case where the space is $\R^d$, as opposed to any
vector space.

\begin{prop}\label{prop:taylor_discretisation} (Multiple results from Taylor
\cite{taylor2017convex,taylor2017smooth}). Let $S=(x_i, g_i, \pot_i)_{i \in
\llbracket 1, n \rrbracket} \in (\R^d \times \R^d \times \R)^n$. \blue{The set
$S$ is said to be $\FunsRd$-interpolable (\cite{taylor2017convex}, Definition
3.1) if there exists $\pot \in \FunsRd$ such that $\forall i \in \llbracket 1, n
\rrbracket,\; \nabla\pot(x_i) = g_i$ and $\pot(x_i) = \pot_i$.} Consider the
quadratic function
    \begin{equation}\label{eqn:discrete_qcqp_Q}
        Q(x, x', \pot, \pot', g, g') := \pot-\pot'-\langle g', x-x'\rangle - c_1\|g - g'\|_2^2 - c_2\|x-x'\|_2^2 + c_3\langle g'-g, x' -x \rangle,
    \end{equation}
    for $x, x'\in \R^d,\; \pot,\pot'\in \R,\; g, g'\in \R^d$, with
    $$c_1 := \cfrac{1}{2\gradlip(1-\strcvx/\gradlip)},\quad c_2 :=
    \cfrac{\strcvx}{2(1-\strcvx/\gradlip)},\quad c_3 :=
    \cfrac{\strcvx}{\gradlip(1-\strcvx/\gradlip)}. $$
	\begin{itemize}
		\item (\cite{taylor2017convex}, Theorem 3.8) The set $S$ is
		$\FunsRd$-interpolable if and only if for all $i,j \in \llbracket 1, n
		\rrbracket,$
			\begin{align}\label{eqn:cns-cvx-interpolable}
                Q(x_i, x_j, \pot_i, \pot_j, g_i, g_j) \geq 0.
			\end{align}
		\item (\cite{taylor2017convex}, Theorem 3.14)  For $x \in \R^d$, let:
			\begin{align}\label{eqn:pot_l}
				\pot_{l}(x) = &\underset{t\in \R,\; g\in \R^d}{\min}\ t, \\
				&\mathrm{s.t.}\ \forall j \in \llbracket 1, n \rrbracket,\;
                Q(x, x_j, t, \pot_j, g, g_j) \geq 0;\nonumber
			\end{align}
			\begin{align}\label{eqn:pot_u}
				\pot_{u}(x) = &\underset{t\in \R,\; g\in \R^d}{\max}\ t, \\
				&\mathrm{s.t.}\ \forall i \in \llbracket 1, n \rrbracket,\;
                Q(x_i, x, \pot_i, t, g_i, g) \geq 0.\nonumber
			\end{align}
			If $S$ is $\FunsRd$-interpolable, then any interpolating function
			$\pot$ satisfies $\pot_{l} \leq \pot \leq \pot_{u}$, and the
			potentials $\pot_{l}, \pot_{u}$ are valid interpolations.
	\end{itemize}
\end{prop}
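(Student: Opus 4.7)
The plan is to reconstruct the key ideas behind Taylor's interpolation theorem rather than simply cite \cite{taylor2017convex}. The cornerstone is a tight pointwise inequality characterising $\strcvx$-strongly convex functions with $\gradlip$-Lipschitz gradient: for any $\pot \in \FunsRd$ and any $x, x' \in \R^d$,
\[
\pot(x) - \pot(x') \geq \langle \nabla\pot(x'), x-x' \rangle + c_1\|\nabla\pot(x) - \nabla\pot(x')\|_2^2 + c_2\|x-x'\|_2^2 + c_3\langle \nabla\pot(x) - \nabla\pot(x'), x-x'\rangle.
\]
This is derived by applying the classical co-coercivity inequality for smooth convex functions to the auxiliary function $\psi(x) := \pot(x) - \frac{\strcvx}{2}\|x\|_2^2$, which is convex with $(\gradlip - \strcvx)$-Lipschitz gradient whenever $\pot \in \FunsRd$, and then rewriting the resulting inequality in terms of $\pot$ and $\nabla\pot$. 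A direct expansion shows this is exactly the statement $Q(x, x', \pot(x), \pot(x'), \nabla\pot(x), \nabla\pot(x')) \geq 0$.

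The necessity direction of the interpolability characterisation is then immediate: if an interpolant $\pot \in \FunsRd$ of $S$ exists, instantiating the inequality above at $(x, x') = (x_i, x_j)$ and substituting $\pot(x_i) = \pot_i$, $\nabla\pot(x_i) = g_i$ yields exactly $Q(x_i, x_j, \pot_i, \pot_j, g_i, g_j) \geq 0$ for all $i, j$.

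The sufficiency direction is the main obstacle and requires an explicit construction of an interpolant from the data. I would take $\pot_l$ defined in \cref{eqn:pot_l} as the candidate. The plan has three steps: first, verify that the constraint system defining $\pot_l(x)$ is feasible at every $x \in \R^d$, which is a convex quadratic feasibility problem whose solvability follows from the assumed inequalities among the data $(x_j, \pot_j, g_j)$; second, check that at $x = x_i$ the pair $(t, g) = (\pot_i, g_i)$ is both feasible and optimal, so that $\pot_l(x_i) = \pot_i$ and the associated optimal gradient is $g_i$; third, and most delicately, verify that $\pot_l$ itself lies in $\FunsRd$. The third step is the technical crux: it can be handled by establishing $Q \geq 0$ for $\pot_l$ on every pair $x, x' \in \R^d$ using the constraint structure of the minimisation (the optimal $(t', g')$ at $x'$ is feasible in the programme at $x$), then turning this global inequality into $\strcvx$-strong convexity and $\gradlip$-smoothness of $\pot_l$ via the standard equivalences (subgradient monotonicity plus co-coercivity).

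Finally, the bound $\pot_l \leq \pot \leq \pot_u$ for any interpolant $\pot \in \FunsRd$ is a direct consequence of the necessity argument. At any $x \in \R^d$, the pair $(\pot(x), \nabla\pot(x))$ satisfies $Q(x, x_j, \pot(x), \pot_j, \nabla\pot(x), g_j) \geq 0$ for every $j$, so it is feasible in the minimisation defining $\pot_l(x)$, yielding $\pot_l(x) \leq \pot(x)$. The symmetric argument applied to $Q(x_i, x, \pot_i, \pot(x), g_i, \nabla\pot(x)) \geq 0$ gives $\pot(x) \leq \pot_u(x)$, and the validity of $\pot_l, \pot_u$ as interpolations themselves follows from Step 3 of the sufficiency argument above.
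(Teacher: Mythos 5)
The paper itself offers no proof of this proposition: it is quoted directly from Taylor's work (\cite{taylor2017convex}, Theorems 3.8 and 3.14), so the only question is whether your reconstruction would stand on its own. Your overall strategy is indeed the one Taylor follows (reduce to the convex case via $\psi := \pot - \tfrac{\strcvx}{2}\|\cdot\|_2^2$, a two-point inequality for necessity, extremal extensions for sufficiency), but two steps fail as written. First, your cornerstone inequality carries the wrong sign on the cross term. Unfolding $Q(x,x',\pot(x),\pot(x'),\nabla\pot(x),\nabla\pot(x'))\geq 0$ and using $\langle g'-g,x'-x\rangle=\langle g-g',x-x'\rangle$ gives
\[
\pot(x)-\pot(x') \geq \langle \nabla\pot(x'),x-x'\rangle + c_1\|\nabla\pot(x)-\nabla\pot(x')\|_2^2 + c_2\|x-x'\|_2^2 - c_3\langle \nabla\pot(x)-\nabla\pot(x'),x-x'\rangle,
\]
whereas you assert the version with $+c_3$. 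That version is false: for $\pot=\tfrac{a}{2}\|\cdot\|_2^2$ with $\strcvx\leq a\leq \gradlip$ it amounts to $a(\gradlip-\strcvx)\geq a^2+\strcvx \gradlip+2\strcvx a$, which at $a=\gradlip$ reads $-4\strcvx \gradlip\geq 0$ and fails whenever $\strcvx>0$ (the correct sign yields $(a-\strcvx)(\gradlip-a)\geq 0$). Since you then claim a "direct expansion shows this is exactly $Q\geq 0$", the necessity argument as written does not go through until the sign is fixed.

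Second, and more substantially, the sufficiency direction is not established. Your Step 3 rests on the assertion that "the optimal $(t',g')$ at $x'$ is feasible in the programme at $x$", but the programme defining $\pot_l(x)$ in \cref{eqn:pot_l} only contains the constraints $Q(x,x_j,t,\pot_j,g,g_j)\geq 0$ against the $n$ data points; it imposes no relation between $(t,g)$ and the point $x'$, so feasibility at $x$ of the optimiser at $x'$ is not a meaningful statement, and what is actually needed — $Q(x,x',t,t',g,g')\geq 0$ and $Q(x',x,t',t,g',g)\geq 0$ for the two optimisers — does not follow from the stated constraints. Taylor's proof of Theorem 3.8 instead reduces $\FunsRd$-interpolability to plain convex interpolation (maximum of affine minorants) through a Legendre conjugation and a quadratic shift, and Theorem 3.14 is then deduced from Theorem 3.8 by adjoining the extra triple $(x,t,g)$ to $S$ and noting that the augmented set is interpolable exactly when the constraints in \cref{eqn:pot_l,eqn:pot_u} hold. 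Your final paragraph (the envelope bounds $\pot_l\leq\pot\leq\pot_u$ for any interpolant) is correct, but it presupposes the interpolability characterisation whose hard half is the part left unproved.
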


\cref{prop:taylor_discretisation} shows that the constraint on $(\pot_i, g_i)_i$
can be written as a set of quadratic constraints. It follows immediately that
any problem that only depends on the values $g(x_i)$ for a variable $G\in \Funs$
can be written as a problem over $(\pot_i, g_i)_i$ under quadratic constraints,
as stated in \cref{cor:apply_taylor}.

\begin{corollary}\label{cor:apply_taylor} Consider an objective $J: \Funs
    \longrightarrow \R_+$ such that for $g\in \Funs$, the value $J(g)$ can be
    written $J\left(g(x_1), \cdots, g(x_n)\right)$. Then the problem
    \begin{equation}\label{eqn:apply_taylor_continuous_form}
        \underset{g\in \Funs}{\min}\ J(g)
    \end{equation}
    is equivalent to the problem
    \begin{align}\label{eqn:apply_taylor_discrete_form}
        &\underset{\substack{\pot_1, \cdots, \pot_n \in \R\\ 
        g_1, \cdots, g_n \in \R^d}}{\min} J(g(x_1), \cdots, g(x_n)) \\
        &\text{s.t.}\ \forall k \in \llbracket 1, K \rrbracket,\; 
        \forall i,j \in I_k : Q(x_i, x_j, \pot_i, \pot_j, g_i, g_j) \geq 0,
        \nonumber        
    \end{align}
    where $I_k := \lbrace i\in  \llbracket 1, n \rrbracket\ |\ x_i \in
	E_k\rbrace$, and $Q$ is defined in \cref{eqn:discrete_qcqp_Q}. Given a
	solution $(\pot_i^*, g_i^*)_i$ of \cref{eqn:apply_taylor_discrete_form}, any
	solution $\nabla \pot^*$ of \cref{eqn:apply_taylor_continuous_form}
	satisfies $\pot_l \leq \pot^* \leq \pot_u$ on $\cup_k \interior{E_k}$, where
	for $x \in \interior{E_k}$, the bounding potentials and their gradients are
	solutions of:
	\begin{align}\label{eqn:pot_lk}
		(\pot_{l}(x), \nabla \pot_l(x)) = 
        &\underset{t\in \R,\; g\in \R^d}{\argmin}\ t, \\
		&\mathrm{s.t.}\ \forall j \in I_k,\; 
        Q(x, x_j, t, \pot_j^*, g, g_j^*)\geq 0;\nonumber
	\end{align}
	\begin{align}\label{eqn:pot_uk}
		(\pot_{u}(x), \nabla\pot_u(x)) = &\underset{t\in \R,\; g\in \R^d}{\argmax}\ t, \\
		&\mathrm{s.t.}\ \forall i \in I_k,\; Q(x_i, x, \pot_i^*, t, g_i^*, g) \geq 0.\nonumber
	\end{align}
	The potentials $(\varphi_l, \varphi_u)$ themselves are both solutions of
	\cref{eqn:apply_taylor_continuous_form}.
\end{corollary}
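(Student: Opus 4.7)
The plan is to deduce \cref{cor:apply_taylor} from \cref{prop:taylor_discretisation} by applying the latter separately on each part $E_k$ of the partition $\mathcal{E}$, and then invoking the fact that the objective $J$ depends only on the values $(g(x_1),\dots,g(x_n))$.

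First, I would establish a bijection between the feasible sets (up to the equivalence induced by $J$). For the direct implication, given $g \in \Funs$, by definition one has $g|_{\interior{E_k}} = \nabla \pot_k$ for some $\strcvx$-strongly convex, $\gradlip$-Lipschitz-gradient potential $\pot_k$; setting $\pot_i := \pot_k(x_i)$ and $g_i := \nabla\pot_k(x_i)$ for every $i \in I_k$, the ``only if'' part of \cref{prop:taylor_discretisation} applied to the finite sample $(x_i, g_i, \pot_i)_{i \in I_k}$ (which is by construction $\FunsRd$-interpolable by $\pot_k$) yields the quadratic constraints $Q(x_i, x_j, \pot_i, \pot_j, g_i, g_j) \geq 0$ for all $i,j\in I_k$. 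Hence every continuous feasible point projects to a discrete feasible point with identical objective value.

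Conversely, given any feasible $(\pot_i, g_i)_i$ of \cref{eqn:apply_taylor_discrete_form}, I would apply the ``if'' direction of \cref{prop:taylor_discretisation} on each part $I_k$ independently: this yields, for each $k$, a potential $\pot_k \in \FunsRd$ on $\R^d$ interpolating $(x_i, \pot_i, g_i)_{i \in I_k}$. Since the class $\Funs$ only imposes regularity of $\pot_k$ on $\interior{E_k}$ and no inter-part compatibility, the function $g$ defined by $g|_{\interior{E_k}} := \nabla \pot_k|_{\interior{E_k}}$ belongs to $\Funs$ and satisfies $g(x_i) = g_i$ for every $i$. Because $J$ depends only on $(g(x_1),\ldots,g(x_n))$, both objectives coincide through this correspondence, so the two problems are equivalent and their optimal sets are in one-to-one correspondence.

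For the bounding potentials, fix any continuous optimiser $\nabla\pot^*$ of~\cref{eqn:apply_taylor_continuous_form} and let $(\pot_i^*, g_i^*)$ be the associated discrete optimiser (adjusting the additive constants on each part so that $\pot^*(x_i) = \pot_i^*$). For each $k$, the restriction $\pot^*|_{\interior{E_k}}$ is a valid $\FunsRd$-interpolation of $(x_i, g_i^*, \pot_i^*)_{i \in I_k}$, so the second bullet of \cref{prop:taylor_discretisation} yields the pointwise bounds $\pot_l(x) \leq \pot^*(x) \leq \pot_u(x)$ for every $x \in \interior{E_k}$, with $\pot_l,\pot_u$ defined by the linear programs~\cref{eqn:pot_lk,eqn:pot_uk}. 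Finally, the same proposition guarantees that $\pot_l$ and $\pot_u$ are themselves valid $\FunsRd$-interpolations on each part, so gluing them across parts produces elements of $\Funs$; since their gradients realise the same values $g_i^*$ at each $x_i$, they yield the optimal objective $J$ and are therefore solutions of \cref{eqn:apply_taylor_continuous_form}.

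The only delicate point is the bookkeeping of additive constants for the $\pot_i^*$: because $J$ depends on $g$ and not on $\pot$, the discrete problem is invariant under shifting each $\pot_i$ (for $i \in I_k$) by a common constant, and one must fix this constant to align $\pot^*$ with $\pot_i^*$ so that the Taylor bounds can be read off directly. Aside from this normalisation, the argument is a straightforward partwise invocation of \cref{prop:taylor_discretisation}.
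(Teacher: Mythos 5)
Your proposal is correct and follows exactly the route the paper intends: the paper states this corollary without a written proof, treating it as an immediate partwise application of \cref{prop:taylor_discretisation}, which is precisely what you carry out (feasibility correspondence part by part, then the interpolation bounds on each $\interior{E_k}$). Your remark on normalising the additive constants of $\pot^*$ on each part so that $\pot^*(x_i)=\pot_i^*$ is a genuine detail the paper glosses over, and handling it is necessary for the bounds $\pot_l \leq \pot^* \leq \pot_u$ to be read off from the second bullet of the proposition.
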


Note that the values of the potentials can be chosen arbitrarily on the
boundaries $\partial E_k$.

We can now provide an algorithm for $\argmin_{g\in \Funs}\Tc(g\#\mu, \nu)$
(\cref{eqn:SSNB_Tc}) using \cref{cor:apply_taylor}: the objective is
\begin{equation}\label{eqn:objective_SSNB_Tc_discrete}
    J(g(x_1), \cdots g(x_n)) = \underset{\pi\in \Pi(a, b)}{\min}\ \sum_{i,j}\pi_{i,j}c(g(x_i), y_j),
\end{equation}
and the resulting problem defined in \cref{eqn:apply_taylor_discrete_form} can
be solved by alternating over $\pi$ (solving a discrete Kantorovich problem,
using \texttt{ot.emd} from the PythonOT library, for instance
\cite{flamary2021pot}), and over $(\pot_i, g_i)$, for which the constraints are
quadratic and the objective depends on the cost $c$. For smooth cost, one may
use projected gradient descent, and for (convex) quadratic costs, the problem
becomes a (convex) Quadratically Constrained Quadratic Program (QCQP). In the
case $c(x, y) = \|x-y\|_2^2$, this method is already known, and is the core
contribution of \cite{paty2020regularity} \blue{summarised in
\cref{alg:bcd_convex}, with a generalisation to convex quadratic costs $c_P(x,
y) := (x-y)^TP(x-y)$ with $P \in S_d^{++}(\R)$ (a positive-definite symmetric
matrix). We remind the notation $\Pi_{c_P}^*(g\#\mu, \nu)$ as the set of optimal
couplings between $g\#\mu$ and $\nu$ for the cost $c_P$.}

\begin{figure}[ht]
	\centering
	\begin{minipage}{.95\linewidth}
        \blue{
		\begin{algorithm}[H]
			\SetAlgoLined \KwData{Strongly convex constant $\strcvx\geq0$,
			Lipschitz constant $\gradlip \geq \strcvx$, disjoint point classes
			$I_k \subset \llbracket 1, n \rrbracket$ and discrete probability
			distributions $\mu=\sum_i a_i\delta_{x_i}$ and $\nu
			=\sum_jb_j\delta_{y_j}$.} \textbf{Initialisation:} Compute $\pi \in
			\Pi^*_{c_P}(\mu, \nu)$.\; \For{$t \in \llbracket 0, T_{\max} - 1
			\rrbracket$}{ Update $(\pot_i, g_i)_{i\in \llbracket 1, n
			\rrbracket}$ by solving the QCQP: 
            
            $\underset{\substack{\pot_1, \cdots, \pot_n \in \R\\ g_1, \cdots,
			g_n \in \R^d}}{\min} \sum_{i,j} (g_i-y_j)^TP(g_i-y_j)\pi_{i,j}$
            
            $\text{s.t.}\ \forall k \in \llbracket 1, K \rrbracket,\; \forall
            i,j \in I_k : Q(x_i, x_j, \pot_i, \pot_j, g_i, g_j) \geq 0.$
            
            Update $\pi$ by solving the discrete Kantorovich problem: $\pi \in
            \Pi_{c_P}^*(g\#\mu, \nu)$.} \caption{Alternate Minimisation for the
            Gradient of Strongly Convex Functions.} \textbf{Return}: $(\pot_i,
            g_i)_{i\in \llbracket 1, n \rrbracket}$.
			\label{alg:bcd_convex}
		\end{algorithm}}
	\end{minipage}
\end{figure}

\blue{\paragraph{Time complexity.} From a time complexity standpoint, the QCQP
problem at lines 3-4-5 bears a substantial cost. As a coarse analysis, standard
methods such as \cite{ye1989extension} have $\O(L^2N^4)$ complexity, where $N$
is the dimension of variables, here $N=(d+1)n$, and where $L = N^2 + NM + R$,
where $M$ is the number of constraints, here $M = \sum_k \#I_k^2$, and $R =
[\log|T|]$, with $T$ the sum of the non-zero integers in the float
representation of $P$ and the constraint matrix. For simplicity, we will
continue with $K=1$ and thus $M=n^2$. This yields the final (prohibitive)
complexity: $\O\left((n(d+1)+n^3+R)^2 (d+1)^4n^4\right).$ For the transport
cost, using the network simplex (see the explanation in \cite{computational_ot}
Section 3.5), omitting multiplicative logarithmic terms, the time complexity of
solving the linear Kantorovich problem between measures with $n$ and $m$ points
and cost matrix $M$ is $\O((n+m)nm\log(n+m)\log((n+m)\|M\|_{\infty}))$
\cite{tarjan1997dynamic}.

In \cref{fig:cvx_field}, we present a numerical example of the method for a map
fitting a two-dimensional standard Gaussian to a two-dimensional Gaussian
Mixture.
\begin{figure}[ht]
    \centering
    \begin{adjustbox}{valign=c}
        \begin{subfigure}[b]{0.45\textwidth}
            \centering
            \includegraphics[width=\textwidth]{
                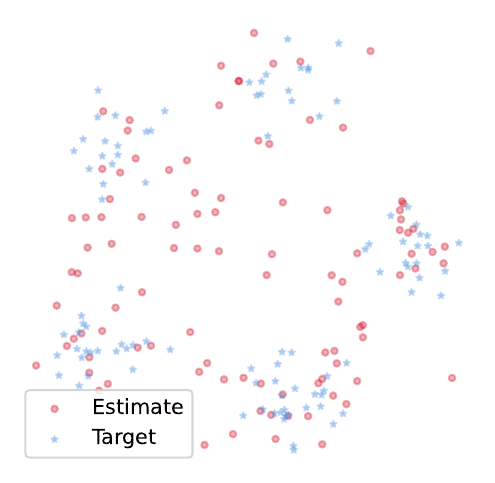} \caption{Positions of the
                optimal value positions $(g_i^*)$ for the map estimated with 
                \cref{alg:bcd_convex}.}
        \end{subfigure}
    \end{adjustbox}
    \hfill
    \begin{adjustbox}{valign=c}
        \begin{subfigure}[b]{0.45\textwidth}
            \centering
            \includegraphics[width=\textwidth]{
                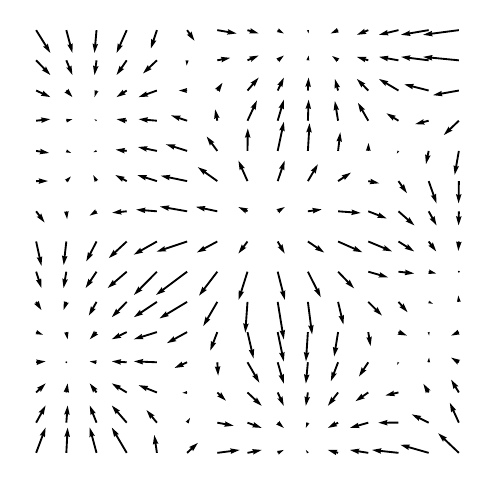} \caption{Evaluation of
                the learned map $g: \R^2 \longrightarrow \R^2$ from
                \cref{alg:bcd_convex} on a grid.}
        \end{subfigure}
    \end{adjustbox}
    \caption{\blue{Illustration of the method described in \cref{alg:bcd_convex}
    for the map problem from samples of a standard Gaussian distribution to
    samples of a Gaussian mixture. The map $g$ is constrained to be 2-Lipschitz
    and the gradient of a $1/2$-strongly convex function. The cost is taken as
    $c(x,y) = \|x-y\|_2^2$. Note that due to the constraints, we obtain an
    inexact matching, with in particular leakage between the modes of the target
    distribution.}}
    \label{fig:cvx_field}
\end{figure}
Since no public implementation of the QCQP problem from
\cite{paty2020regularity} is available, we contributed a solver for
\cref{alg:bcd_convex} for the squared-Euclidean cost in the Python OT library
\cite{flamary2021pot}, with an example
\footnote{\url{https://pythonot.github.io/auto_examples/others/plot\_SSNB.html\#sphx-glr-auto-examples-others-plot-ssnb-py}
}.}

\subsection{Numerical Method for Maps in a RKHS}\label{sec:kernels}

We introduce a relatively straightforward kernel method to solve the map problem
(\cref{eqn:CATM}). We fix a reproducing kernel Hilbert space (RKHS) $\RKHS$ of
functions $\X\longrightarrow \R^d$ of kernel $K: \X\times\X \longrightarrow
\R^{d\times d}$. We denote by $\langle \cdot, \cdot \rangle_\RKHS$ the inner
product on $\RKHS$, and $\|\cdot\|_{\RKHS}$ the associated RKHS norm on $\RKHS$.

Given discrete measures $\mu = \sum_{i=1}^na_i\delta_{x_i} \in \mathcal{P}(\X)$
and $\nu = \sum_{j=1}^mb_j\delta_{y_j}\in \mathcal{P}(\R^d)$, we will solve a
regularised variant of the map problem (\cref{eqn:CATM}):
\begin{equation}\label{eqn:kernel_problem}
    \underset{h\in \RKHS}{\argmin}\ \Tc(h\#\mu, \nu) + \lambda\|h\|_{\RKHS}^2,
\end{equation} 
for some constant $\lambda>0$ that penalises the norm of $h$, which equates to
imposing regularity on the function $h$. Given the support of $\mu$, the cost
$\Tc(h\#\mu, \nu)$ only depends on $(h(x_1), \cdots, h(x_n))$. A well known
reduction method in RKHS theory \blue{(detailed in \cref{sec:RKHS_reduction} for
completeness)} then allows to look for solutions in an $n$-dimensional linear
subspace $V$ of $\RKHS$:
\begin{equation}\label{eqn:rkhs_V}
    V := \left\{\sum_{k=1}^n K(\cdot, x_k)u_k\ :\ \forall k \in
      \llbracket 1, n \rrbracket,\; u_k \in \R^d\right\},\;  \, \, \text{ of
    }  \,\, \underset{h\in V}{\argmin}\ \Tc(h\#\mu, \nu) + \lambda\|h\|_{\RKHS}^2.
\end{equation}
Since any element $h\in V$ is characterised by its coefficients $(u_1, \cdots,
u_n) \in (\R^d)^n$, we can formulate \cref{eqn:rkhs_V} as a problem over the
$(u_i)$. First, using the kernel reproducing property, we compute
\begin{equation}\label{eqn:kernel_norm}
    \left\|\sum_{k=1}^n K(\cdot, x_k)u_k\right\|_\RKHS^2 =
    \sum_{k=1}^n\sum_{l=1}^nu_{k}^TK(x_k, x_{l})u_{l}.
\end{equation}
Concerning the transport cost term, we remind the notation for the value of the
Kantorovich discrete problem
$$\W(a, b, M) := \underset{\pi \in \Pi(\mu, \nu)}{\min}\ M\cdot \pi, $$ and in
this case, the cost matrix $M$ can be computed using the expression
\begin{equation}\label{eqn:kernel_cost_matrix}
    \forall (i,j)\in \llbracket 1, n \rrbracket \times \llbracket 1, m
    \rrbracket,\; M_{i,j} = c\left(\sum_{k=1}^n K(x_i, x_k)u_k, y_j\right).
\end{equation}
The dependency in the $(u_i)$ lies in the cost $M$. Numerically, provided that
$c$ is sufficiently regular, this allows a minimisation through classical
algorithms such as gradient descent, using differentiable implementations of the
discrete Kantorovich cost, such as \texttt{ot.emd2} \cite{flamary2021pot}. By
introducing the $nd\times nd$ matrix $\mathbf{K}$ defined by $n\times n$ blocks
$K(x_i, x_j)$ of size $d\times d$:
\begin{equation}\label{eqn:kernel_matrix}
    \mathbf{K} = \left(\begin{array}{ccc} K(x_1, x_1) & \cdots & K(x_1, x_n) \\
    \vdots & & \vdots \\
    K(x_n, x_1) & \cdots & K(x_n, x_n) \end{array}\right),
\end{equation}
and the stacked vector $\mathbf{u} \in \R^{nd}$,
\cref{eqn:kernel_norm,eqn:kernel_cost_matrix} can be re-written as matrix
products. This yields our final expression for \cref{eqn:kernel_problem}:
\begin{equation}\label{eqn:kernel_problem_alpha_i}
    \underset{\mathbf{u}\in \R^{nd}}{\min}\ \W(a, b, M(\mathbf{u})) + \lambda\mathbf{u}^T\mathbf{K}\mathbf{u},\quad M(\mathbf{u})_{i,j} := c\left(\mathbf{K}_{[i,:]}\mathbf{u}, y_j\right),
\end{equation}
where $\mathbf{K}_{[i,:]}$ denotes the sub-matrix of $\mathbf{K}$ with the $n$
lines $((i-1)d +1, \cdots, id)$, which corresponds to the $i$-th $d\times d$
block line of $\mathbf{K}$. Given optimal coefficients $\mathbf{u} = (u_1,
\cdots, u_n) \in (\R^d)^n$, a solution $h$ is defined everywhere in $\X$ using
the kernel:
$$\forall x \in \X,\; h(x) = \sum_{i=1}^n K(x, x_i)u_i. $$
\begin{remark}\label{remark:kernel_regularisation} The only constraints that are
    imposed upon a solution of \cref{eqn:kernel_problem} come from the choice of
    the kernel $K$ (or equivalently of the space $\RKHS$) and of the
    regularisation coefficient $\lambda>0$. A natural idea would be to add a
    regularisation term $R(h)$, for instance to enforce $h$ to be a gradient of
    a convex function. For \cref{lemma:kernel_reduction} to apply, one would
    need to have a regularisation which only depends on the values $(h(x_i))$,
    which is very restrictive. A possible heuristic would be to look for $h\in
    V$ regardless of this property on $R$, however the resulting problem would
    have no theoretical link to the problem over $h\in \RKHS$, unlike in our
    case. Finally, a regularisation which depends on an infinite amount of
    values $h(x)$ are numerically challenging, in the specific case of
    \textit{dense inequality constraints}, we refer to \cite{rudi2024finding} as
    a useful tool.
\end{remark}

\begin{remark}\label{remark:pertubation} A natural idea is to consider class of
    functions that are perturbations of a simple map, for instance $g = sI + h$,
    where $h$ is in a RKHS $\mathcal{H}$, and $s>0$ is a scale factor. Given
    \cref{lemma:kernel_reduction}, this tweak comes without numerical or
    theoretical cost.    
\end{remark}

We illustrate this kernel method in \cref{fig:1d_kernel_illu} with the Gaussian
kernel $K(x, y) = \exp\left(-\|x-y\|_2^2 / (2\sigma^2)\right) I_d$ and maps of
the form $g = I + h$, where $h$ is in the RKHS generated by the Gaussian kernel.

\begin{figure}[ht]
    \centering
    \begin{adjustbox}{valign=c}
        \begin{subfigure}[b]{0.45\textwidth}
            \centering
            \includegraphics[width=\textwidth]{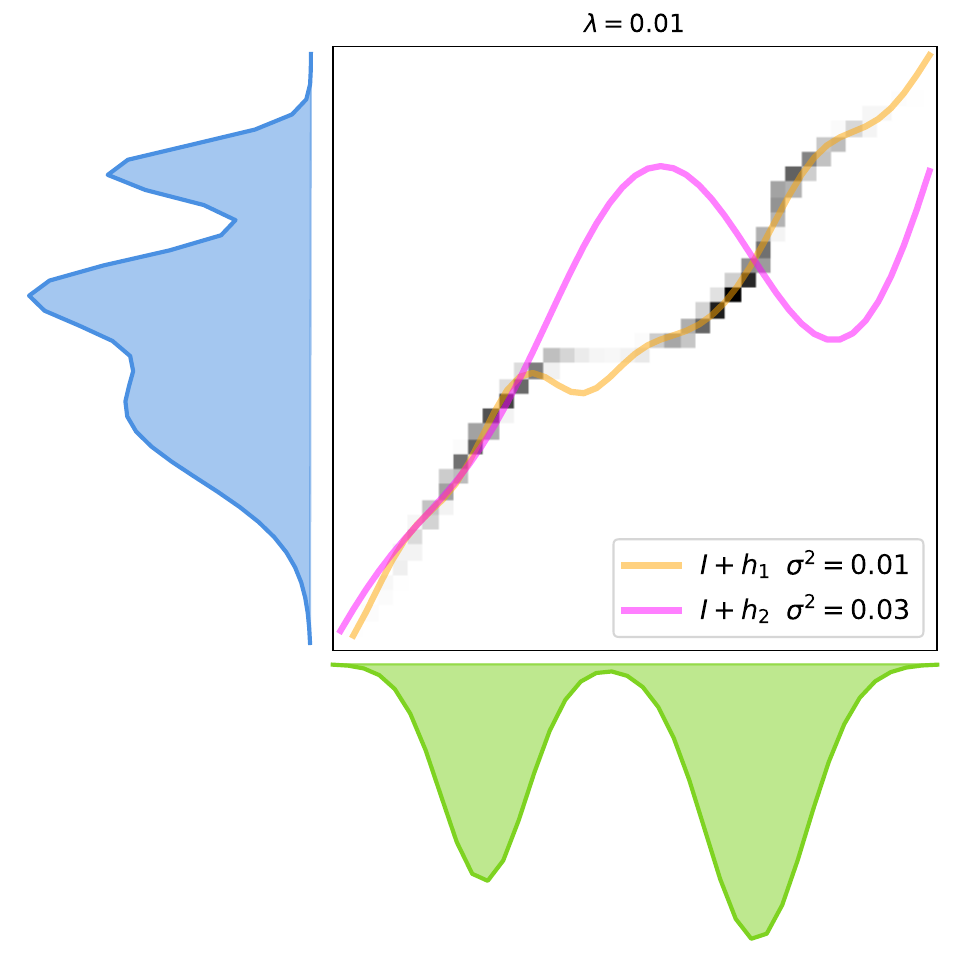}
            \caption{Kernel map solution for a regularisation $\lambda = 0.01$ 
            and multiple scales $\sigma^2$.}
        \end{subfigure}
    \end{adjustbox}
    \hfill
    \begin{adjustbox}{valign=c}
        \begin{subfigure}[b]{0.45\textwidth}
            \centering
            \includegraphics[width=\textwidth]{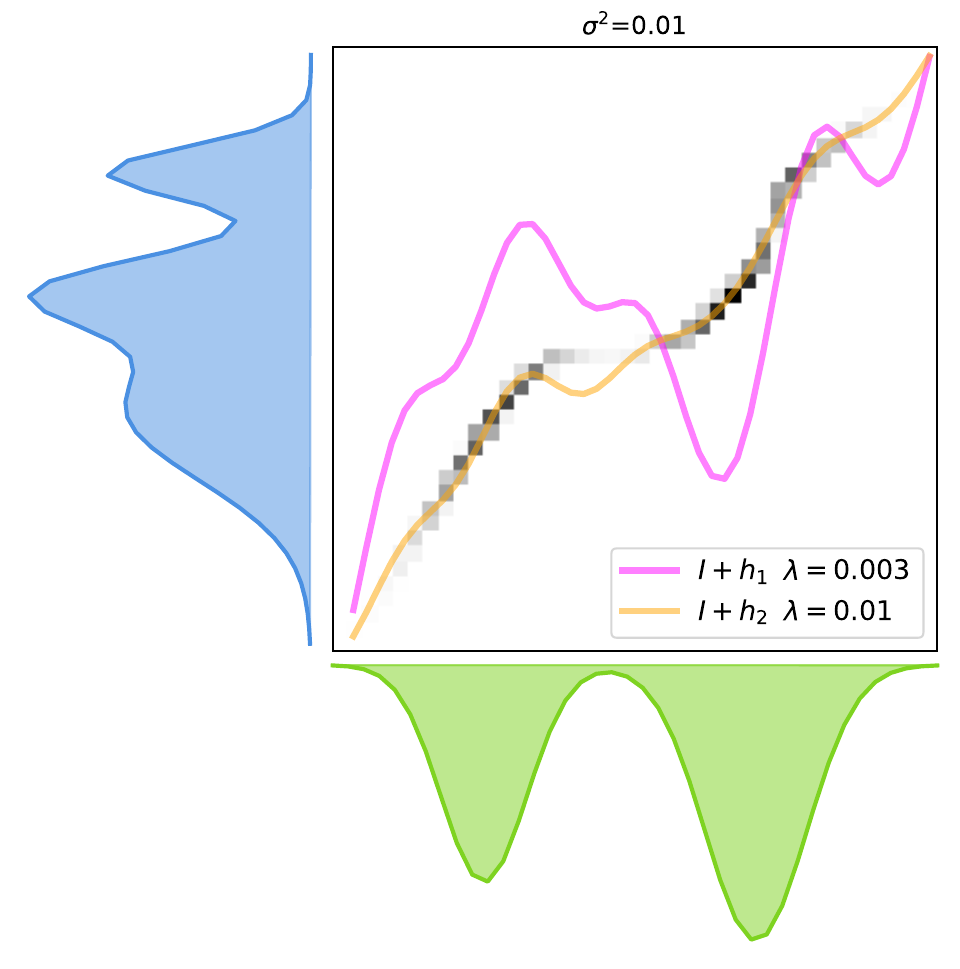}
            \caption{Kernel map solution for a scale $\sigma^2 = 0.01$ and 
            multiple regularisations $\lambda$.}
        \end{subfigure}
    \end{adjustbox}
    \caption{Illustration of the kernel method for the map problem between two
    Gaussian mixtures, using the Gaussian kernel. \blue{In greyscale, the OT
    plan is represented for reference.}}
    \label{fig:1d_kernel_illu}
\end{figure}

\blue{ From an algorithmic viewpoint, we propose in \cref{alg:kernel_GD} a
simple (sub-)gradient descent method (GD) for the discrete kernel map problem
\cref{eqn:kernel_problem_alpha_i}, and provide a convergence result in
\cref{prop:cv_kernel_GD} using results from \cref{sec:semi_algebraic} and
Ermoliev-Norkin \cite{ermoliev1997stochastic}.

\begin{figure}[ht]
	\centering
	\begin{minipage}{.95\linewidth}
        \blue{
		\begin{algorithm}[H]
			\SetAlgoLined \KwData{Gradient steps $\alpha_t > 0$, kernel
			regularisation $\lambda>0$, discrete probability distributions
			$\mu=\sum_i a_i\delta_{x_i}$ and $\nu =\sum_jb_j\delta_{y_j}$,
			kernel function $K$.} \textbf{Pre-Processing:} Compute the
			matrix $\mathbf{K}$ from \cref{eqn:kernel_matrix}\;
			\textbf{Initialisation:} Draw $\mathbf{u}_0 \in \R^{nd}$\; \For{$t
			\in \llbracket 0, T_{\max} - 1 \rrbracket$}{ $\mathbf{u}_{t+1}
			=\mathbf{u}_t - \alpha_t\left[\dr{\mathbf{u}}{}{} \left( W(a, b,
			M(\mathbf{u})) + \lambda\mathbf{u}^T\mathbf{K}\mathbf{u} \right)
			\right]_{\mathbf{u} = \mathbf{u}_t}$ (see
			\cref{eqn:kernel_problem_alpha_i}). }
			\caption{GD on the Kernel Map Parameters.}
			\label{alg:kernel_GD}
		\end{algorithm}}
	\end{minipage}
\end{figure}

Concerning time complexity, there are two main bottlenecks: the matrix-vector
computations $\mathbf{Ku}$ which incur a $\O(n^2d^2)$ cost, and solving the
discrete Kantorovich problem, which is in $\O((n+m)nm \log(n+m)
\log((n+m)\|M\|_{\infty}))$, as discussed in \cref{sec:numerics_grad_convex}.
Note that for memory efficiency, one may use map-reduce methods such as proposed
in \cite{charlier2021kernel} to avoid storing the matrix $\mathbf{K}$, at the
cost of a higher time complexity. For scalar kernels $K(x, x') = k(x, x')I_d$,
it suffices to store $\mathbf{K} := (k(x_i, x_j))_{i,j} \in \R^{n\times n}$,
reducing the memory complexity to $\O(n^2 + nd)$, and matrix-vector products to
$\O(n^2d)$.

\begin{prop}[Convergence of GD for the Kernel Method, application of
    \cite{ermoliev1997stochastic} Theorem 4.1]\label{prop:cv_kernel_GD} Take a
    locally Lipschitz and semi-algebraic (see \cref{def:semi_algebraic}) cost
    function $c$, and gradients steps $\alpha_t>0$ such that
    $\alpha_t\rightarrow0$ and $\sum_t\alpha_t=+\infty$. The iterates
    $(\mathbf{u}_t)$ of \cref{alg:kernel_GD} are such that any accumulation
    point $\mathbf{v}$ is Clarke critical: $0 \in \partial_CJ(\mathbf{v})$, with
    $J(\mathbf{u}) := W(a, b, M(\mathbf{u})) +
    \lambda\mathbf{u}^T\mathbf{K}\mathbf{u}$.
\end{prop}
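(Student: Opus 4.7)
The plan is to apply Ermoliev-Norkin Theorem 4.1 \cite{ermoliev1997stochastic} to the objective $J(\mathbf{u}) := W(a, b, M(\mathbf{u})) + \lambda \mathbf{u}^T \mathbf{K} \mathbf{u}$, which requires verifying that $J$ is generalized differentiable, that its set of Clarke critical values is finite (or at least of measure zero), and that the update direction used by \cref{alg:kernel_GD} is a valid selection in the Clarke subgradient $\partial_C J$.

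The first step is to establish that $J$ is locally Lipschitz and semi-algebraic. The quadratic term $\mathbf{u} \longmapsto \lambda \mathbf{u}^T \mathbf{K} \mathbf{u}$ is smooth and semi-algebraic. For the transport term, note that $\mathbf{u} \longmapsto M(\mathbf{u})$ has entries $c(\mathbf{K}_{[i,:]}\mathbf{u}, y_j)$: since $c$ is locally Lipschitz and semi-algebraic by assumption, and since affine precomposition by $\mathbf{K}_{[i,:]}$ and translation by $y_j$ preserve these properties, each entry is locally Lipschitz and semi-algebraic. By \cref{prop:discrete_kanto_clarke_semi_algebraic}, $W(a,b,\cdot)$ is Lipschitz and semi-algebraic on $\R^{n\times m}$; closure of semi-algebraic sets under composition then yields that $J$ is locally Lipschitz and semi-algebraic.

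Consequently, by \cref{lemma:semi_algebraic_props}, $J$ is generalized differentiable in the sense of \cite{ermoliev1997stochastic} Definition 3.1, and its set of Clarke critical values is finite. The next step is to check that the update direction used in \cref{alg:kernel_GD} is a genuine element of $\partial_C J(\mathbf{u}_t)$. The gradient of the smooth part is $2\lambda \mathbf{K} \mathbf{u}_t$. For the transport part, by \cref{prop:discrete_kanto_clarke_semi_algebraic} the Clarke subgradient of $W(a,b,\cdot)$ at a cost matrix $M$ is the set of optimal couplings $\pi^\star$; composition with the $\mathcal{C}^1$ map $M(\cdot)$ combined with the chain rule for locally Lipschitz semi-algebraic maps (which yields equality for selections produced by standard automatic differentiation, via the \emph{conservative field} framework of \cite{bolte2021conservative}) guarantees that the vector returned by backpropagation through \texttt{ot.emd2} is in $\partial_C J(\mathbf{u}_t)$.

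With these ingredients in hand, the step-size conditions $\alpha_t \longrightarrow 0$ and $\sum_t \alpha_t = +\infty$ are exactly the assumptions of Ermoliev-Norkin Theorem 4.1 in the deterministic case, so the conclusion follows: every accumulation point $\mathbf{v}$ of the iterates $(\mathbf{u}_t)$ satisfies $0 \in \partial_C J(\mathbf{v})$. The main subtlety will be the subgradient-selection step: although Clarke's chain rule generally only yields an inclusion, here the conservativity of automatic differentiation on locally Lipschitz semi-algebraic maps provides the precise guarantee needed to identify the selected descent direction as an element of $\partial_C J$ and thus legitimately invoke \cite{ermoliev1997stochastic} Theorem 4.1.
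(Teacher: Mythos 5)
Your proposal is correct and follows essentially the same route as the paper: establish that $J$ is locally Lipschitz and semi-algebraic by composing \cref{prop:discrete_kanto_clarke_semi_algebraic} with the assumptions on $c$, invoke \cref{lemma:semi_algebraic_props} for generalised differentiability and finiteness of the critical value set, and then apply Ermoliev--Norkin Theorem 4.1. The extra care you take with the subgradient-selection/chain-rule step (via the conservative-field viewpoint) is a point the paper leaves implicit, but it does not change the argument.
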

\begin{proof}
    First, by \cref{prop:discrete_kanto_clarke_semi_algebraic} and by
    semi-algebraicity and local Lipschitzness of $c$, $J$ is locally Lipschitz
    and semi-algebraic. Using now \cref{lemma:semi_algebraic_props}, we have the
    sufficient regularity conditions to use the convergence result of
    \cite{ermoliev1997stochastic} (Theorem 4.1).
\end{proof}
} \blue{\subsection{Gradient Descent for Neural Networks}\label{sec:sgd_nn_map}}
\blue{ We now consider the case where the function class $G$ is the class of
neural networks introduced in \cref{eqn:def_NN}, with parameters in a compact
set $\Theta \subset \R^p$, which we also assume to be convex. We will introduce
a technical modification of the neural network from \cref{eqn:def_NN} and
consider the parametrised function:
\begin{equation}\label{eqn:def_HNN}
    h :=(\theta, x) \longmapsto g_{P_\Theta(\theta)}(x),
\end{equation}
with $g$ the map defined \cref{eqn:def_NN}, and where the map $P_\Theta : \R^p
\longrightarrow \Theta$ denotes the orthogonal projection onto $\Theta$. This
re-writing allows us to define the NN $h$ on all of $\R^p \supset \Theta$. In
practice, SGD with this network is essentially equivalent to projecting the
parameters after each gradient step (with the technicality that in our
formalism, the gradient of $P_\Theta$ is included in the backpropagation). 

To solve the map problem of minimising $\T_c(h(\theta, \cdot)\#\mu, \nu)$ in
practice, we consider a commonly used minibatch surrogate loss $F(\theta)$,
which we define in \cref{eqn:def_F}. Given a dataset $X^{(n)} \in \R^{n\times k}
= (x_1, \cdots, x_n)$, we will denote abusively $h(\theta, X^{(n)}) \in
\R^{n\times d} := (h(\theta, x_1), \cdots, h(\theta, x_n))$. Given a dataset
$X^{(n)} \in \R^{n\times k}$, the measure $\delta_{X^{n}} \in \mathcal{P}(\R^k)$
denotes $\frac{1}{n}\sum_i\delta_{x_i}$. Similarly, we will denote a target
dataset $Y^{(m)}$. The loss $F$ we consider is
\begin{equation}\label{eqn:def_F}
    F(\theta) := \int \T_c(\delta_{h(\theta, X^{(n)})}, \delta_{Y^{(m)}})
    \dd\mu^{\otimes n}(X^{(n)})\dd\nu^{\otimes m}(Y^{(m)}).
\end{equation}
The loss $F$ will be minimised by Stochastic Gradient Descent over $\theta$,
where the stochasticity is on the data batches $X^{(n)}$ and $Y^{(m)}$, as
described in \cref{alg:SGD}.

\begin{figure}[ht]
	\centering
	\begin{minipage}{.95\linewidth}
        \blue{
		\begin{algorithm}[H]
			\SetAlgoLined \KwData{Gradient steps $\alpha_t > 0$, probability
            distributions $\mu \in \mathcal{P}(\R^k)$ and $\nu \in
            \mathcal{P}(\R^d)$, NN $h(\theta,.)$.}
            \textbf{Initialisation:} Draw $\theta_0 \in \Theta$\; \For{$t \in
            \llbracket 0, T_{\max} - 1 \rrbracket$}{ Draw $X^{(n)} \sim
            \mu^{\otimes n},\;  Y^{(m)} \sim \nu^{\otimes m}$. \\ 
            $\theta_{t+1} = \theta_t - \alpha_t\left[\dr{\theta}{}{}
            \T_c(\delta_{h(\theta, X^{(n)})},\delta_{Y^{(m)}}) \right]_{\theta =
            \theta_t}$. }
			\caption{Training a NN map for the cost $\T_c$.}
			\label{alg:SGD}
		\end{algorithm}}
	\end{minipage}
\end{figure}

An important remark is that this formalism bears strong similarities to the
alternate minimisation framework studied in \cref{sec:alternate_minimisation}
for the squared-Euclidean cost. Indeed, it can be seen as an alternation of the
map parameters $\theta$ and the (minibatch) OT plan $\pi$ in $\T_c$ (line 4):
the optimisation over $\pi$ is done by solving the linear program when computing
the cost $\T_c$, and then one gradient step of optimisation over $\theta$ is
performed. To study \cref{alg:SGD} theoretically, we will give precise meaning
to the partial derivative at line 4 using the notions introduced in
\cref{sec:semi_algebraic}. Numerically, the sub-gradients in question are
computed by automatic differentiation. Note that $P_\Theta$ is Lipschitz and
semi-algebraic. 

Thanks to the regularity result on the OT cost proved in
\cref{prop:discrete_kanto_clarke_semi_algebraic}, we can use recent SGD
convergence results by Bolte, Le and Pauwels \cite{bolte2023subgradient} to show
that the iterates of \cref{alg:SGD} converge in a certain sense. First, to give
sense to the gradient in \cref{alg:SGD}, we remark that for locally Lipschitz
semi-algebraic activation functions, the map $h(\cdot, \cdot)$ is semi-algebraic
and locally Lipschitz. By composition using
\cref{prop:discrete_kanto_clarke_semi_algebraic}, the sample loss function: 
$$f(\cdot, X^{(n)}, Y^{(m)}) := \theta \longmapsto \T_c(\delta_{h(\theta,
X^{(n)})}, \delta_{(Y^{(m)})}),$$ is locally Lipschitz and semi-algebraic. We
can select a semi-algebraic sub-gradient $\varphi: \R^p \times \R^{n\times k}
\times \R^{m\times d} \longrightarrow \R^p$ such that 
$$\forall \theta\in \R^p,\; \forall X^{(n)} \in \R^{n\times k},\; \forall
Y^{(m)} \in \R^{m\times d},\; \varphi(\theta, X^{(n)}, Y^{(m)}) \in \partial_C
f(\theta, X^{(n)}, Y^{(m)}),$$ where the selection can be done by lexicographic
order on coordinates, for example. Note that $f(\cdot, X^{(n)}, Y^{(m)})$ is
differentiable almost-everywhere, and that at differentiable points, $\varphi$
equates its usual gradient. The choice of sub-gradient performed by automatic
differentiation satisfies this condition (see a discussion on this procedure in
\cite{bolte2021conservative,davis2020stochastic}.) We remind that the population
loss function is $F = \theta \longmapsto \int f(\theta, X^{(n)},
Y^{(m)})\dd\mu^{\otimes n}(X^{(n)})\dd\nu^{\otimes m}(Y^{(m)})$ in this setting.

\begin{prop}[Convergence of SGD for NN maps, application of
    \cite{bolte2023subgradient} Theorem 3]\label{prop:map_nn_sgd_converges}
    Assume that $\mu, \nu$ are discrete measures or compactly supported measures
    with semi-algebraic densities with respect to the Lebesgue measure. Assume
    that the NN $h$ is defined as in \cref{eqn:def_HNN}, with locally Lipschitz
    semi-algebraic activation functions. Assume that $\Theta$ is compact, convex
    and semi-algebraic. Suppose that the cost function $c: \R^d\times \R^d
    \longrightarrow \R_+$ is locally Lipschitz and semi-algebraic. Take gradient
    steps $(\alpha_t)_{t\in \N} \in (0, +\infty)^\N$ such that $\sum_t \alpha_t
    = +\infty$ with $\alpha_t = o(1/\log(t))$.

    Then there exists a set of possible steps $A\subset (0, +\infty)$ whose
    complement is finite, and a set of possible initialisations $\Theta_0
    \subset \Theta$ of full measure, such that for each step sequence
    $(\alpha_t) \in A^\N$ verifying the conditions, the stochastic gradient
    descent iterates:
    $$\theta_0 \in \Theta_0,\; \forall t\in \N,\; \theta_{t+1} = \theta_t -
    \alpha_t\varphi(\theta_t, X_t^{(n)}, Y_t^{(m)}),\; X_t^{(n)}\sim\mu^{\otimes
    n},\; Y_t^{(m)}\sim \nu^{\otimes m},
    $$
    verify that almost-surely, $(F(\theta_t))$ converges, and almost-surely 
    any accumulation point $\oll{\theta}$ of $(\theta_t)$ is such that $0 \in 
    \partial_C F(\oll{\theta})$, under the (mild) additional assumption that that the trajectories $(\theta_t)$ are almost-surely bounded.
\end{prop}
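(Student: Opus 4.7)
The plan is to verify the hypotheses of Theorem 3 of \cite{bolte2023subgradient}, which yields almost-sure subsequential convergence of SGD iterates to Clarke-critical points provided the sample loss is locally Lipschitz and definable, the subgradient selection is measurable with an integrable envelope, the population loss $F$ is definable in an o-minimal structure (and thus satisfies the Kurdyka-\L{}ojasiewicz inequality), and the step sizes satisfy the stated summability condition. The step-size condition is already assumed, so I focus on the other three, establishing them by tracing semi-algebraic compositions through the architecture and invoking \cref{prop:discrete_kanto_clarke_semi_algebraic}.

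First I would show that the network $h(\theta, x) = g_{P_\Theta(\theta)}(x)$ is jointly locally Lipschitz and semi-algebraic in $(\theta, x)$. The projection $P_\Theta$ onto the convex compact semi-algebraic set $\Theta$ is $1$-Lipschitz and semi-algebraic, since its graph is cut out by first-order polynomial inequalities. A layer-by-layer induction on \cref{eqn:def_NN}, using that affine maps and each activation $a_n$ are locally Lipschitz and semi-algebraic by assumption, propagates these properties through the network. The cost matrix $M(\theta, X^{(n)}, Y^{(m)})_{i,j} = c(h(\theta, x_i), y_j)$ inherits them by composition with $c$, and \cref{prop:discrete_kanto_clarke_semi_algebraic} gives that the Kantorovich value $W(a, b, \cdot)$ is Lipschitz and semi-algebraic on $\R^{n\times m}$. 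Hence $f(\theta, X^{(n)}, Y^{(m)}) = W(a, b, M(\theta, X^{(n)}, Y^{(m)}))$ is jointly locally Lipschitz and semi-algebraic. The lexicographic selection $\varphi$ is then a semi-algebraic measurable selection of $\partial_C f(\cdot, X^{(n)}, Y^{(m)})$, coinciding with the output of automatic differentiation \cite{bolte2021conservative}; compactness of $\Theta$ via $P_\Theta$ provides a uniform bound on $\|\varphi\|$, yielding the required envelope.

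For the population loss $F$, the discrete case is immediate, as $F$ is then a finite convex combination of semi-algebraic locally Lipschitz functions. In the absolutely continuous case, I invoke parametric integration results in o-minimal geometry: the integral of a semi-algebraic function against a compactly supported semi-algebraic density is definable in an o-minimal structure containing the semi-algebraic sets (such as $\mathbb{R}_{\mathrm{an}}$), so $F$ remains definable and locally Lipschitz on $\Theta$, hence satisfies Kurdyka-\L{}ojasiewicz. With trajectories assumed almost surely bounded, Theorem 3 of \cite{bolte2023subgradient} then applies and produces the stated conclusion; the cofinite exceptional step-size set $A^c$ and the null initialization set $\Theta \setminus \Theta_0$ appear in its statement, the former via a definable Morse-Sard argument analogous to \cref{lemma:semi_algebraic_props}. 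The main obstacle is precisely this definability-under-integration step in the absolutely continuous case: strict semi-algebraicity is not preserved by integration in parameters, but one remains inside a rich enough o-minimal structure, which is all the framework of \cite{bolte2023subgradient} requires.
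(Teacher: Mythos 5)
Your proposal is correct and follows essentially the same route as the paper: both proofs reduce to checking the hypotheses of Bolte--Le--Pauwels Theorem 3 by propagating local Lipschitzness and semi-algebraicity through $P_\Theta$, the network layers, the cost $c$, and the discrete Kantorovich value (\cref{prop:discrete_kanto_clarke_semi_algebraic}), together with a semi-algebraic sub-gradient selection $\varphi$ and the semi-algebraic-density/discreteness assumption on $\mu,\nu$ needed for the population loss. Your additional remarks on definability under parametric integration and on the discrete case (the paper's appeal to their Remark 3) only make explicit what the cited theorem already packages.
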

\begin{proof}
    We apply Bolte-Le-Pauwels \cite{bolte2023subgradient}, 
    Theorem 3 to the NN $h$ with the discrete OT loss from
    \cref{prop:discrete_kanto_clarke_semi_algebraic}. Thanks to the assumptions 
    formulated in the result statement, to 
    \cref{prop:discrete_kanto_clarke_semi_algebraic} and to the construction of 
    a semi-algebraic sub-gradient selection $\varphi$, 
    we have collected all the conditions for
    Bolte-Le-Pauwels, Theorem 3, yielding the result. For the case where one or
    both of $\{\mu, \nu\}$ is/are discrete, we applied their Remark 3.
\end{proof}

In \cref{fig:nn_field}, we present a numerical example of the method for a map
fitting a two-dimensional standard Gaussian to a two-dimensional Gaussian
Mixture.

\begin{figure}[ht]
    \centering
    \begin{adjustbox}{valign=c}
        \begin{subfigure}[b]{0.45\textwidth}
            \centering
            \includegraphics[width=\textwidth]{
                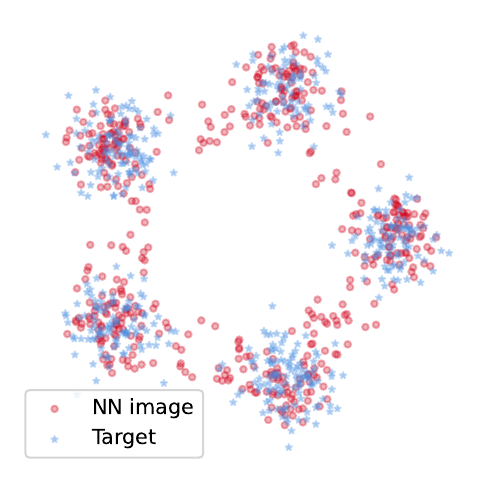}
            \caption{Comparison of images of samples of the source Gaussian by 
            the learned NN map to the target Gaussian Mixture.}
        \end{subfigure}
    \end{adjustbox}
    \hfill
    \begin{adjustbox}{valign=c}
        \begin{subfigure}[b]{0.45\textwidth}
            \centering
            \includegraphics[width=\textwidth]{
                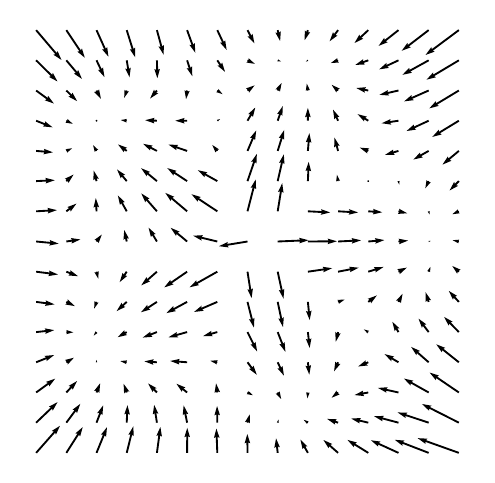}
            \caption{Evaluation of the learned NN map $g: \R^2 
            \longrightarrow \R^2$ on a grid.}
        \end{subfigure}
    \end{adjustbox}
    \caption{\blue{Illustration of the method described in~\cref{alg:SGD} for 
    the map problem from samples
    of a standard Gaussian distribution to samples of a Gaussian mixture. The
    map $g$ is of the form $g = I + h$, where $h$ is a small 4-layer NN with 
    ReLU activation functions, and weights constrained to $[-1/2, 1/2]$. The 
    cost is taken as $c(x,y) = \|x-y\|_2^2$. Note that due to the (indirect) 
    constraint on the Lipschitz constant, we obtain an inexact matching, 
    with in particular leakage between the modes of the target distribution.}}
    \label{fig:nn_field}
\end{figure}
}
\blue{\subsection{Illustrative Application to Colour Transfer}\label{sec:colour_transfer}}
\blue{
In this section, we consider the task of colour transfer, which consists in 
transforming the colour distribution of a source image onto the colour 
distribution of a target image. An $(n\times m)$ RGB image is seen as a 
3-tensor $\I \in [0, 1]^{n\times m\times 3}$, and its colour distribution is 
then a discrete measure $\mu = \frac{1}{nm}\sum_{i,j}\delta_{\I_{i,j}}$ on 
$\R^3$.

We illustrate that a learned map $g: \R^3 \longrightarrow \R^3$ which is 
optimised to transfer the colours of a source image $\I_s$ onto a target image 
$\I_t$ can be used on a new image $\I$ to transfer its colours. This
is made possible since the map $g$ is defined everywhere, and not only
at the points of $\mu$. We consider the cost $c(x, y) = \|x-y\|_2^2$, and a 
simple NN map $g$ using \cref{alg:SGD} on the 
source and target images, and then apply the map to new images. We present the 
results in \cref{fig:colour_transfer} for three different training tasks. 
Notice how the constraint on the map $g$ allows us to have a
colour transfer that is robust to outliers. In 
\cref{fig:colour_transfer_rgb} in \cref{sec:colour_transfer_rgb}, we present 
the results in the RGB space, seeing the images as pixel point clouds. 

\begin{figure}[H]
    \begin{center}
        \includegraphics[width=0.7\textwidth]{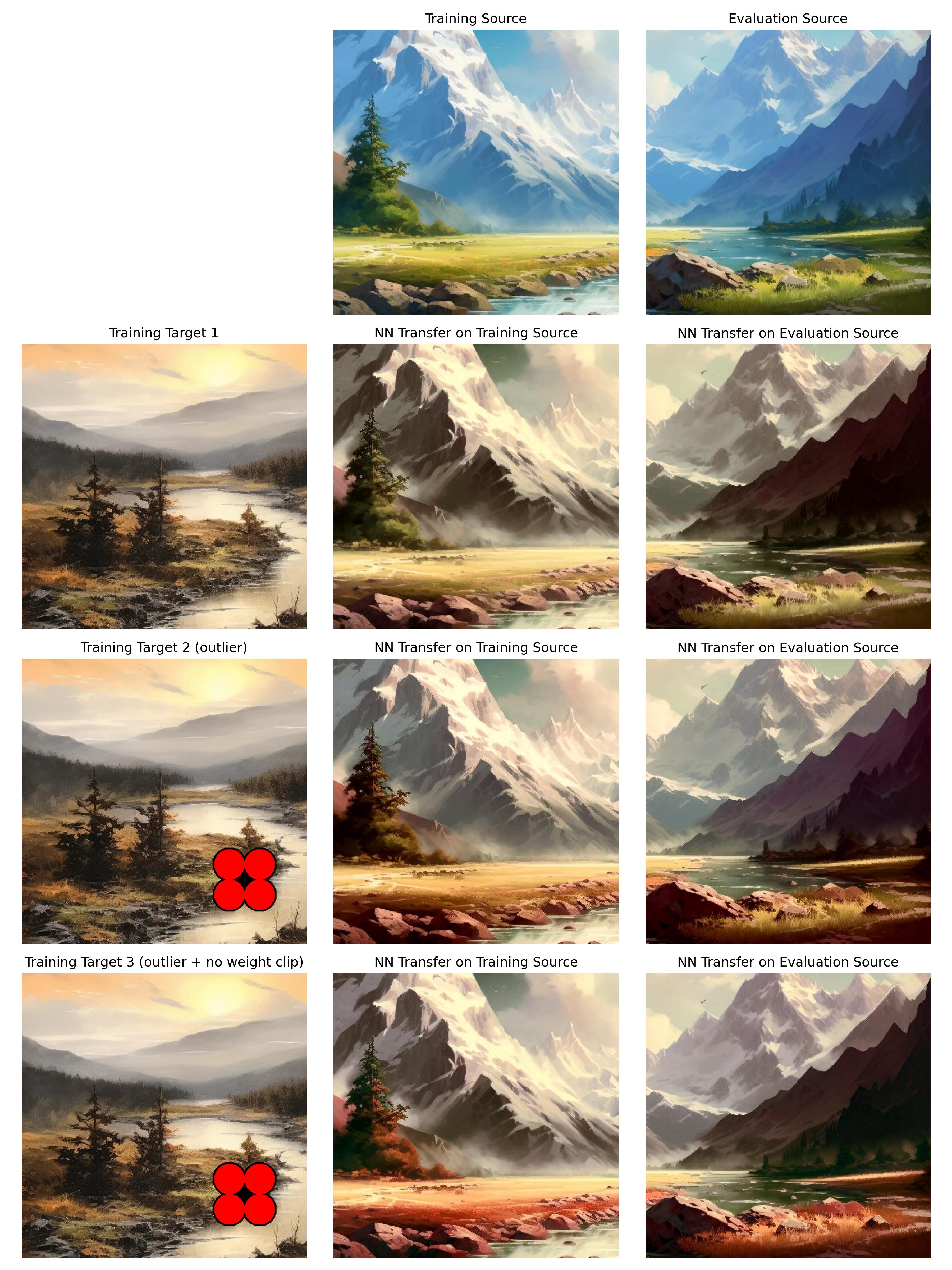}
    \end{center}
    \caption{\blue{Colour transfer using a NN map trained on the task of
    transferring the colour distribution of the Training Source image onto 
    three different Training Target images (column 1, rows 2 to 4). In the 
    second training (row 3), the target image presents an outlier in 
    the colour distribution. In the third training (row 4), 
    the same target image with outliers is used, this 
    time with no weight clipping. For all trainings,
    the learned map is applied to the training image (second column) and 
    to the test image "Evaluation Source" (column 3).}}
    \label{fig:colour_transfer}
\end{figure}}
\section{Conclusion and Outlook}

In this paper, we have considered the problem of finding an optimal transport
map $g$ between two probability measures $\mu$ and $\nu$ under the constraint
that $g\in G$, where $G$ is a given set of functions ($L$-Lipschitz, gradient of
a convex function,  for instance). We have given general assumptions to ensure
the existence of an optimal map $g$, we have studied the relationship between
our problem and many other concepts in Optimal Transport, and also the link with
kernel methods. We have also explained how to solve the problem from a practical
point a view \blue{with convergence guarantees, and an application to colour
transfer}.

We believe that there are two important but difficult questions that should be
investigated in future work. The first is the question of the uniqueness of an
optimal map. We have given a partial answer to this question, but it seems to be
a difficult question in its whole generality. Having a result of uniqueness
would then open the way to new questions, such as the use of $g$ to compare
measures in a way similar to Linearised Optimal Transport, or the study of the
statistical properties of $g$ (related to the sample complexity). The second
important question is the addition of constraints in the kernel method, more
precisely: how to translate a set of functions $G$ (like the set of gradients of
convex functions for instance) into a RKHS representation? 

\subsection*{Acknowledgements}

We extend our warmest thanks to Nathaël Gozlan for his valuable input regarding
technical assumptions for the existence result. We thank Joan Glaunès
for the fruitful time we spent working 
together on kernel problems. \blue{We also want to thank Tam Le for providing
references and insight for non-smooth and non-convex optimisation questions. We
are grateful for the remarks and feedback of two anonymous reviewers, which in
particular allowed significant improvement of the map existence results.}

This research was funded in part by the Agence nationale de la
recherche (ANR), Grant ANR-23-CE40-0017 and by the
France 2030 program, with the reference ANR-23-PEIA-0004.

\bibliography{ecl}
\bibliographystyle{plain}

\appendix
\section{Appendix}

\subsection{Continuous-to-Discrete Case: Semi-discrete OT}\label{sec:continuous_to_discrete}

In the alternate optimisation scheme proposed in
\cref{sec:alternate_minimisation}, the step with $g$ fixed can be seen as
semi-discrete Optimal Transport, whenever the target measure $\nu$ is discrete,
and when the measure $g\#\mu$ is absolutely continuous with respect to the
Lebesgue measure. The condition $g\#\mu\ll \Leb$ arises naturally whenever the
source measure $\mu$ is itself absolutely continuous, which we will assume for
this section.

Specifically, the sub-problem of computing
$$\Tc(g\#\mu, \nu)$$ can be seen as a semi-discrete optimal transport problem
between $g\#\mu$ and $\nu$ (see \cite{merigot2020ot_course} for a course with a
detailed section on semi-discrete OT). To apply semi-discrete optimal transport
methods to this sub-problem, we need to verify $g\#\mu \ll \Leb$. First, it
follows from the definition that if $g\#\Leb \ll \Leb$, then, since we assume
$\mu$ is absolutely continuous, $g\#\mu \ll \Leb$ would follow. In
\cref{lemma:image_measure_AC}, we provide relatively general sufficient
conditions on the map $g: \R^d\longrightarrow \R^d$.

\begin{lemma}\label{lemma:image_measure_AC} Let $g: \R^d \longrightarrow \R^d$
    locally Lipschitz such that for $\Leb$-a.e. $x\in \R^d,\; \det\partial
    g(x) \neq 0$. Then $g\#\Leb \ll \Leb$.
\end{lemma}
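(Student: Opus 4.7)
The plan is to prove the contrapositive statement in measure-theoretic form: if $A \subset \R^d$ is a Borel set with $\Leb(A) = 0$, then $g\#\Leb(A) = \Leb(g^{-1}(A)) = 0$. The main tool will be the area (change-of-variables) formula for Lipschitz maps between Euclidean spaces of the same dimension.

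First, I would reduce to the globally Lipschitz case. Since $g$ is locally Lipschitz, one can cover $\R^d$ by a countable family of closed balls $(\oll{B}_n)_{n\in\N}$ on each of which $g$ is Lipschitz; by Kirszbraun's theorem (or a direct extension) one obtains a global Lipschitz map $g_n : \R^d \longrightarrow \R^d$ agreeing with $g$ on $\oll{B}_n$. It then suffices to show $\Leb(g^{-1}(A) \cap \oll{B}_n) = 0$ for each $n$, which by the agreement equals $\Leb(g_n^{-1}(A) \cap \oll{B}_n)$. So I may assume that $g$ itself is Lipschitz on $\R^d$, and work with a fixed bounded Borel set $E \subset g^{-1}(A)$; by an inner approximation, it will be enough to bound $\Leb(E)$.

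Next, by Rademacher's theorem the set $D$ of points at which $g$ is differentiable has full Lebesgue measure, and by hypothesis the set $Z := \{x\in\R^d : \det\partial g(x) = 0\}$ has Lebesgue measure zero. Removing the null set $D^c \cup Z$ from $E$, one can assume $E \subset D \setminus Z$. Apply now the area formula for Lipschitz maps (see e.g. Evans--Gariepy, \emph{Measure Theory and Fine Properties of Functions}, Ch.~3), which states
\begin{equation*}
\int_E |\det \partial g(x)| \, \dd\Leb(x) \;=\; \int_{\R^d} \#\bigl(g^{-1}(y) \cap E\bigr) \, \dd\Leb(y).
\end{equation*}
Since $g(E) \subset A$, the nonnegative integrand on the right-hand side vanishes outside $A$, so the right-hand side equals $\int_A \#(g^{-1}(y)\cap E) \, \dd\Leb(y) = 0$ because $\Leb(A) = 0$ (with the convention $0 \cdot \infty = 0$). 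Hence the left-hand integral is zero; but on $E$, $|\det\partial g| > 0$ pointwise, so $\Leb(E) = 0$.

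Assembling the pieces, $\Leb(g^{-1}(A)) \leq \sum_n \Leb(g^{-1}(A) \cap \oll{B}_n) = 0$, which gives $g\#\Leb(A) = 0$ and therefore $g\#\Leb \ll \Leb$. I do not anticipate a serious obstacle: the main technical point is invoking the area formula with the correct handling of the null sets $D^c$ and $Z$, and the Kirszbraun/covering step needed to pass from local to global Lipschitz behaviour before applying it.
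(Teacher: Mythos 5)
Your proof is correct and takes essentially the same route as the paper: both arguments apply the area formula for Lipschitz maps to show that $\int_{E} |\det\partial g|\,\dd\Leb = 0$ for $E$ a (localised) piece of the preimage of a null set, and then use the almost-everywhere non-vanishing of the Jacobian to conclude $\Leb(E)=0$. The only cosmetic difference is that you pass to globally Lipschitz extensions via Kirszbraun before invoking the area formula, whereas the paper simply intersects with compact sets, on which the restriction of $g$ is Lipschitz.
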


\begin{remark}
    By Rademacher's theorem (\cite{evans2018measure},  Theorem 3.2), a locally
    Lipschitz function is differentiable $\Leb$-a.e..
\end{remark}

\begin{proof}
    First, we remind that $J_g := x \longmapsto |\det \partial g(x)|$ (defined
    $\Leb$-almost-everywhere) is locally integrable since $g$ is locally
    Lipschitz. We now prove that $g\#\Leb \ll \Leb$ by considering the
    intersection of compact sets and $\Leb$-null sets. Let $\mathcal{K}\subset
    \R^d$ a compact set and $E \subset \R^d$ a Borel set such that $\Leb(E)=0$.
    By the area formula (\cite{evans2018measure}, Theorem 3.8), the following
    equality holds
    \begin{equation}\label{eqn:area_formula_application}
        \int_{g^{-1}(E)\cap \mathcal{K}} J_g(x)\dd x =
        \int_{\R^d}\mathcal{H}^0(g^{-1}(\{y\})\cap \mathcal{K}\cap
        g^{-1}(E))\dd y = \int_{E}\mathcal{H}^0(g^{-1}(\{y\})\cap \mathcal{K})\dd y,
    \end{equation}
    where $\mathcal{H}^0$ denotes the 0-dimensional Hausdorff measure (the
    counting measure). The left-side expression in
    \cref{eqn:area_formula_application} is finite. Since $\Leb(E)=0$, it follows
    that the right-most term in \cref{eqn:area_formula_application} is 0, thus
    $$\int_{g^{-1}(E)\cap \mathcal{K}} J_g(x)\dd x = 0. $$ Since by assumption
    $J_g$ is positive almost-everywhere, it follows that $\Leb(g^{-1}(E)\cap
    \mathcal{K})=0$. Since the compact set $\mathcal{K}$ was chosen arbitrarily,
    we conclude that $\Leb(g^{-1}(E))=0$, which shows $g\#\Leb\ll
    \Leb$.
\end{proof}

\subsection{Lemmas on Pseudo-inverses and Quantile Functions}
\label{appendix:quantile_inverses}

To begin with, we introduce some notions regarding pseudo-inverses of
non-decreasing functions. 

\begin{definition}
	For $\psi:\R\longrightarrow\R$ non-decreasing, its \textbf{right-inverse} is
	defined as the function:
	$$\rinv{\psi}: \R \longrightarrow \oll{\R}: \quad\forall p \in \R,\;
	\rinv{\psi}(p) := \inf\left.\left\{x \in \R\  \right|\ \psi(x)\geq
	p\right\}.$$ For $\phi:\R\longrightarrow\R$ non-decreasing, its
	\textbf{left-inverse} is defined as the function:
	$$\linv{\phi}: \R \longrightarrow \oll{\R}: \quad \forall p \in \R,\;
	\linv{\phi}(p) := \sup\left.\left\{x \in \R\  \right|\ \phi(x)\leq
	p\right\}.$$
\end{definition}

These notions are particularly useful for the definition of the right-inverse of
the cumulative distribution function of a probability measure $\mu$: $F_\mu:= x
\longmapsto \mu((-\infty, x])$, and for the left-inverse of the function $G_\mu
:= x \longmapsto \mu((-\infty, x))$. We recall and prove some well-known
properties of pseudo-inverses (see \cite{embrechts2013note} for a detailed
presentation of right-inverses). For a non-decreasing function $\psi$, we
define $\psi(-\infty) := \lim_{x\searrow -\infty}\psi(x)\in \R\cup\{-\infty\}$
and $\psi(+\infty) := \lim_{x\nearrow +\infty}\psi(x)\in \R\cup\{+\infty\}$.

\begin{lemma}\label{lemma:pseudo-inverses}
	\begin{enumerate}
		\item Let $\psi: \R \longrightarrow \R$ non-decreasing and
		right-continuous. Then:
		\begin{enumerate}
			\item For all $(x,p) \in \R^2,\; \psi(x)\geq p \Longleftrightarrow x
			\geq \rinv{\psi}(p)$.
			\item If $\rinv{\psi}(p) < +\infty,\; \psi(\rinv{\psi}(p))\geq p.$
		\end{enumerate}
		\item Let $\phi: \R \longrightarrow \R$ non-decreasing and
		left-continuous. Then:
		\begin{enumerate}
			\item For all $(x,p) \in \R^2,\; \phi(x)\leq p \Longleftrightarrow x
			\leq \linv{\phi}(p)$.
			\item If $\linv{\phi}(p) > -\infty,\; \phi(\linv{\phi}(p)) \leq p.$
		\end{enumerate}
		\item Under the assumptions above, if additionally $\phi\leq \psi$, then
		$\linv{\phi} \geq \rinv{\psi}$.
	\end{enumerate}
\end{lemma}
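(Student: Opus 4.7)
My approach is to treat the three claims separately, noting that part 2 is entirely symmetric to part 1 (swap $\geq$/$\leq$, $\inf$/$\sup$, right/left continuity), and that part 3 will follow from the level-set descriptions provided by parts 1 and 2.

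For part 1(a), I will prove the equivalence by double implication. The forward direction $\psi(x)\geq p \Rightarrow x \geq \rinv{\psi}(p)$ is immediate from the definition of infimum: the hypothesis places $x$ inside the set $\{y\in\R : \psi(y)\geq p\}$, so $x$ dominates its infimum. For the reverse direction, I will distinguish cases on the value of $\rinv{\psi}(p)$. If $\rinv{\psi}(p) = -\infty$ the set $\{y : \psi(y)\geq p\}$ is unbounded below; by monotonicity of $\psi$, any $x\in\R$ then dominates some such $y$, giving $\psi(x)\geq p$. If $\rinv{\psi}(p)$ is finite, I will pick a decreasing sequence $y_n \searrow \rinv{\psi}(p)$ with $\psi(y_n)\geq p$, and invoke \emph{right-continuity} of $\psi$ to pass to the limit, obtaining $\psi(\rinv{\psi}(p))\geq p$. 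Monotonicity then gives $\psi(x)\geq \psi(\rinv{\psi}(p)) \geq p$ for $x \geq \rinv{\psi}(p)$. The case $\rinv{\psi}(p) = +\infty$ is vacuous. Part 1(b) is established as a by-product of the finite-$\rinv{\psi}(p)$ case above.

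Part 2 is proved by the exact same template, now relying on \emph{left-continuity} to pass to the limit along an increasing sequence $x_n \nearrow \linv{\phi}(p)$ with $\phi(x_n)\leq p$.

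For part 3, I will argue by contradiction. Assume $\linv{\phi}(p) < \rinv{\psi}(p)$. Using parts 1(a) and 2(a), one may describe the level sets as
\[
\{y : \phi(y)\leq p\} = \{y : y \leq \linv{\phi}(p)\}, \qquad \{y : \psi(y)\geq p\} = \{y : y \geq \rinv{\psi}(p)\},
\]
so that any $x$ strictly between $\linv{\phi}(p)$ and $\rinv{\psi}(p)$ satisfies simultaneously $\phi(x) > p$ and $\psi(x) < p$, contradicting $\phi\leq \psi$. The only issue is whether such an intermediate $x\in\R$ exists: if $\linv{\phi}(p) = -\infty$ then $\phi(y)>p$ for every $y\in\R$, whence $\psi\geq\phi$ forces $\rinv{\psi}(p) = -\infty$ too; symmetrically for $\rinv{\psi}(p) = +\infty$. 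So the strict inequality $\linv{\phi}(p) < \rinv{\psi}(p)$ forces at least one endpoint to be finite, which together with the strict gap yields a genuine intermediate real number, completing the contradiction.

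The only delicate step is the use of right/left continuity in parts 1 and 2 to ensure that the infimum (resp.\ supremum) defining $\rinv{\psi}(p)$ (resp.\ $\linv{\phi}(p)$) is actually attained; this is what prevents us from merely getting $\psi(\rinv{\psi}(p)^+)\geq p$ versus the required $\psi(\rinv{\psi}(p))\geq p$. Every other step is a direct unpacking of $\inf$/$\sup$ together with monotonicity.
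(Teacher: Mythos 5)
Your proof is correct and follows essentially the same route as the paper's: right- (resp.\ left-) continuity of $\psi$ (resp.\ $\phi$) along a monotone sequence approaching the pseudo-inverse yields 1(b)/2(b), the level-set characterisations 1(a)/2(a) follow, and part 3 rests on those characterisations. The only cosmetic differences are that you prove part 3 by exhibiting an intermediate point rather than by comparing infima over the inclusion $\{\phi>p\}\subset\{\psi\geq p\}$ as the paper does, and that for 1(b) in the case $\rinv{\psi}(p)=-\infty$ (which is allowed by the hypothesis $\rinv{\psi}(p)<+\infty$) you should add one line observing that your own case analysis already gives $\psi\geq p$ on all of $\R$, hence $\psi(-\infty)\geq p$ under the paper's convention for $\psi(-\infty)$.
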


\begin{proof}
	We detail the proofs for claims 1.(a) and 1.(b), the arguments for 2.(a) and
	2.(b) being essentially the same. First, we let $p\in \R$ such that
	$\rinv{\psi}(p) < +\infty$, which is equivalent to supposing $A_p \neq
	\varnothing$, with $A_p := \left.\left\{x \in \R\  \right|\ \psi(x)\geq
	p\right\}.$ We also suppose $\rinv{\psi}(p)>-\infty$, which is equivalent to
	assuming that $A_p$ is lower-bounded. Since $A_p \neq \varnothing$, we can
	choose a decreasing sequence $(x_n)\in A_p^\N$ such that $x_n \xrightarrow[n
	\longrightarrow +\infty]{}\rinv{\psi}(p)$. Since $\psi$ is right-continuous
	and $\rinv{\psi}(p)\in \R$, we have $\psi(x_n)\xrightarrow[n \longrightarrow
	+\infty]{}\psi(\rinv{\psi}(p))$. However, since each $x_n \in A_p$, we have
	$\psi(x_n)\geq p$, and by taking the limit in the inequality we deduce
	$\psi(\rinv{\psi}(p))\geq p$. If $\rinv{\psi}(p)=-\infty$, then the same
	argument with $x_n \xrightarrow[n \longrightarrow +\infty]{}-\infty$ and
	$\psi(-\infty) := \lim_{x\searrow -\infty}\psi(x)\in \R\cup\{-\infty\}$ also
	shows $\psi(\rinv{\psi}(p))\geq p$, which concludes the proof of 1.(b).

	For 1.(a), we first assume $\rinv{\psi}(p) < +\infty$. In this case, by 1b)
	we have $\phi(\rinv{\psi}(p))\geq p$, thus $[\rinv{\psi}(p), +\infty)
	\subset A_p$. Yet by definition of $\rinv{\psi}(p)$, $x\in A_p
	\Longrightarrow x \geq \rinv{\psi}(p)$, thus we conclude $A_p =
	[\rinv{\psi}(p), +\infty)$, which is exactly the same statement as
	$\psi(x)\geq p \Longleftrightarrow x\geq \rinv{\psi}(p)$. If $\rinv{\psi}(p)
	= +\infty$, then the equivalence still holds, since $\psi(x)\geq p
	\Longleftrightarrow x\in A_p$, with $A_p=\varnothing$.

	Regarding 3., let $p \in \R$ such that $\rinv{\phi}(p)>-\infty$. Then $\{ x
	\in \R\ |\ \phi(x)\leq p\} = (-\infty, \rinv{\phi}(p)]$ by 2.a), thus
	$\rinv{\phi}(p) = \inf\{x \in \R\ |\ \phi(x)>p\}$. The previous equality
	also holds when $\rinv{\phi}(p)=-\infty$. Now since $\phi \leq \psi$, we
	have $\{x\in \R\ |\ \phi(x)>p\} \subset \{x\in \R\ |\ \psi(x)\geq p\}$, and
	taking the infimum yields $\linv{\phi}(p)\geq \rinv{\psi}(p)$.
\end{proof}

Using this result, we can now prove \cref{lemma:push_forward_increasing_quantile}.

\begin{proof}[Proof of \cref{lemma:push_forward_increasing_quantile} ]
	First, notice that as a cumulative distribution function, $F_\mu$ is
	non-decreasing and right-continuous. Since $g$ is non-decreasing, we have
	for $p\in (0, 1):$
	$$F_{g\#\mu}(g\circ\rinv{F_\mu}(p)) = \P_{X\sim \mu}\left(g(X)\leq g\circ
	\rinv{F_\mu}(p)\right) \geq \P_{X\sim\mu}(X\leq \rinv{F_\mu}(p)) =
	F_\mu\circ\rinv{F_\mu}(p).$$ Now if $\rinv{F_\mu}(p) < +\infty$, we have
	$F_\mu\circ\rinv{F_\mu}(p) \geq p$ by \cref{lemma:pseudo-inverses} 1.b). We
	now turn to the case $\rinv{F}_\mu(p)=+\infty$, which implies that $\forall
	x \in \R,\; F_\mu(x)<p$. Since $F_\mu$ is a cumulative distribution
	function, this implies $p\geq1$, which we excluded. We have shown that
	$F_{g\#\mu}(g\circ\rinv{F_\mu}(p)) \geq p$, thus by definition of
	$\rinv{F_{g\#\mu}}(p)$, we have $\rinv{F_{g\#\mu}}(p) \leq
	g\circ\rinv{F_\mu}(p)$.

	Regarding the converse inequality, we will show that the set $N :=
	\left\lbrace p \in (0,1)\ :\ \rinv{F_{g\#\mu}}(p) < g\circ
	\rinv{F_\mu}(p)\right\rbrace$ is Lebesgue-null. Let $p\in N$ and $\alpha \in
	\left[\rinv{F_{g\#\mu}}(p), g\circ\rinv{F_{g\#\mu}}(p)\right)$. As done
	earlier with $F_\mu$, using \cref{lemma:pseudo-inverses} and the fact that
	$F_{g\#\mu}$ is a c.d.f. and that $p<1$, we have
	$F_{g\#\mu}\circ\rinv{F_{g\#\mu}}(p)\geq p$. Since $F_{g\#\mu}$ is
	non-decreasing, we obtain $p \leq F_{g\#\mu}(\alpha)$. We re-write
	$F_{g\#\mu}(\alpha)$ using its definition, then use the fact that $g$ is
	non-decreasing:
	\begin{equation}\label{eqn:lemma_push_forward_quantile_p}
		p \leq F_{g\#\mu}(\alpha) = \P_{X\sim\mu}\left(g(X)\leq \alpha\right) \leq \P_{X\sim\mu}\left(g(X)<g\circ\rinv{F_\mu}(p)\right) \leq \P_{X\sim\mu}\left(X<\rinv{F_\mu}(p)\right) =: G_\mu(\rinv{F_\mu}(p)).
	\end{equation}
	We now want to show that $G_\mu(\rinv{F_\mu}(p))\leq p$. Since $G_\mu \leq
	F_\mu$ and since they are non-decreasing and $G_\mu$ is left-continuous, and
	$F_\mu$ is right-continuous (by the axiomatic properties of $\mu$), by
	\cref{lemma:pseudo-inverses} item 3, we have $\linv{G_\mu} \geq
	\rinv{F_\mu}.$ In particular, since $G_\mu$ is non-decreasing, we have
	$$G_\mu(\rinv{F}_\mu(p))\leq G_\mu(\linv{G_\mu}(p)) \leq p,$$ where the
	final inequality comes from \cref{lemma:pseudo-inverses} item 2b), with
	$\linv{\phi}(p)>-\infty$ since we chose $p>0$.

	We have shown that $G_\mu(\rinv{F_\mu}(p)) \leq p$, thus every equality in
	\cref{eqn:lemma_push_forward_quantile_p} is an equality, and as a result,
	for any $\alpha\in \left[\rinv{F_{g\#\mu}}(p),
	g\circ\rinv{F_{g\#\mu}}(p)\right)$, we have $F_{g\#\mu}(\alpha)=p$, thus the
	right-inverse $\rinv{F_{g\#\mu}}$ has a jump-discontinuity at $p$: 
	$$\underset{q<p}{\sup}\ \rinv{F_{g\#\mu}}(q) = \rinv{F_{g\#\mu}}(p) <
	\underset{p<q}{\inf}\ \rinv{F_{g\#\mu}}(q).$$ We conclude that $N$ is a
	subset of the set $J$ of jump-discontinuities of $\rinv{F_{g\#\mu}}$, and
	since $\rinv{F_{g\#\mu}}$ is non-decreasing, $J$ is countable and thus of
	Lebesgue measure 0. As a result, we have for almost-every $p\in [0, 1],\;
	\rinv{F_{g\#\mu}}(p) = g\circ \rinv{F}_\mu(p)$.
\end{proof}

\blue{\subsection{Reminder on Reduction in RKHS
methods}\label{sec:RKHS_reduction}}

The reduction method in RKHS is known since \cite{aronszajn1950theory} (Section
3), but given the simplicity of the arguments and for the sake of
self-completeness, we provide a proof and presentation in
\cref{lemma:kernel_reduction}.

\begin{lemma}\label{lemma:kernel_reduction} Consider a cost function $J: \RKHS
    \longrightarrow \R_+$ which can be written as $J(h) = J\left(h(x_1),\cdots,
    h(x_n)\right)$, then if $h^*\in \RKHS$ is a solution of
    $$\underset{h\in \RKHS}{\argmin}\ J(h) + \lambda \|h\|_{\RKHS}^2,$$ then
    $h_V$, the orthogonal projection of $h^*$ onto $V$ (defined in
    \cref{eqn:rkhs_V}) verifies :
    $$\forall i \in \llbracket 1, n \rrbracket,\; h_V(x_i) = h^*(x_i),$$ and as
    a result $J(h_W) = J(h^*)$, which leads to the following problem reduction:
    \begin{equation}\label{eqn:kernel_reduction}
        \underset{h\in \RKHS}{\argmin}\ J(h) + \lambda \|h\|_{\RKHS}^2 = \underset{h\in V}{\argmin}\ J(h) + \lambda \|h\|_{\RKHS}^2.
    \end{equation}
\end{lemma}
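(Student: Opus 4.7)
The plan is to invoke the standard orthogonal decomposition argument with respect to the finite-dimensional subspace $V$, relying on the (vector-valued) reproducing property of the kernel $K$. Throughout, $V$ from \cref{eqn:rkhs_V} is a closed subspace of $\RKHS$, so the orthogonal projection $h_V$ of any $h^\ast \in \RKHS$ onto $V$ is well-defined, and we can write $h^\ast = h_V + h^\ast_\perp$ with $h^\ast_\perp \in V^\perp$.

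First I would show the pointwise identity $h_V(x_i) = h^\ast(x_i)$ for every $i \in \llbracket 1, n \rrbracket$. By the (vector-valued) reproducing property of the matrix-valued kernel $K$, for every $u \in \R^d$ and every $i$ one has $\langle h^\ast_\perp, K(\cdot, x_i) u \rangle_\RKHS = \langle h^\ast_\perp(x_i), u \rangle_{\R^d}$. Since $K(\cdot, x_i) u \in V$ and $h^\ast_\perp \perp V$, the left-hand side vanishes for every $u \in \R^d$, so $h^\ast_\perp(x_i) = 0$. Therefore $h_V(x_i) = h^\ast(x_i)$, and since $J$ depends on $h$ only through the values $(h(x_1), \ldots, h(x_n))$, we get $J(h_V) = J(h^\ast)$.

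Next, by Pythagoras in $\RKHS$, $\|h^\ast\|_\RKHS^2 = \|h_V\|_\RKHS^2 + \|h^\ast_\perp\|_\RKHS^2 \geq \|h_V\|_\RKHS^2$, with equality if and only if $h^\ast_\perp = 0$, i.e.\ $h^\ast \in V$. Combining, for any $h^\ast \in \RKHS$,
\begin{equation*}
J(h_V) + \lambda \|h_V\|_\RKHS^2 \;\leq\; J(h^\ast) + \lambda \|h^\ast\|_\RKHS^2,
\end{equation*}
and the inequality is strict unless $h^\ast \in V$ (since $\lambda > 0$). This proves that every minimiser of the regularised problem over $\RKHS$ must lie in $V$, and that any $h^\ast \in V$ attaining the minimum satisfies the identity in \cref{eqn:kernel_reduction}.

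There is no real obstacle here: the only subtlety is to apply the reproducing property in its vector-valued form, so that orthogonality to each $K(\cdot, x_i) u$ for all $u \in \R^d$ yields vanishing of $h^\ast_\perp(x_i)$ as an element of $\R^d$ (and not merely a one-dimensional projection). Everything else is Pythagoras plus the assumed finite-sample dependence of $J$.
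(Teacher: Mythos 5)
Your proof is correct and follows essentially the same route as the paper: both use the vector-valued reproducing property to show that the component of $h^*$ orthogonal to $V$ vanishes at each $x_i$ (the paper phrases this as the identity $V^\perp = \{h : h(x_i)=0 \ \forall i\}$), and both conclude via the norm-decreasing property of the orthogonal projection. Your added remark that the inequality is strict unless $h^*\in V$ (so every minimiser actually lies in $V$) is a correct, slightly sharper observation than what the paper records, but the argument is otherwise identical.
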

\begin{proof}
    To show that $\forall i \in \llbracket 1, n \rrbracket,\; h_V(x_i) =
    h^*(x_i)$, we will show that
    $$V^\perp = H_0 := \left.\left\lbrace h\in \RKHS\ \right|\ \forall i \in
    \llbracket 1, n \rrbracket,\; h(x_i) = 0 \right\rbrace .$$ Indeed, 
    \begin{align*}
        h \in H_0 &\Longleftrightarrow \forall i \in \llbracket 1, n \rrbracket,\; g(x_i) = 0\\
        &\Longleftrightarrow \forall i \in \llbracket 1, n \rrbracket,\; \forall u \in \R^d,\; g(x_i) \cdot u = 0\\
        &\Longleftrightarrow \forall i \in \llbracket 1, n \rrbracket,\; \forall u \in \R^d,\; \delta_{x_i}^u g = 0\\
        &\Longleftrightarrow \forall i \in \llbracket 1, n \rrbracket,\; \forall u \in \R^d,\; \langle g, K(\cdot, x_i)u \rangle_\RKHS = 0\\
        &\Longleftrightarrow f \in V^\perp,
    \end{align*}
    where $\delta_x^u$ is the linear form $h \longmapsto h(x) \cdot u$, whose
    Riesz representation in $\RKHS$ is $K(\cdot, x)u$ by the kernel reproducing
    property. We conclude the proof with the fact that as an orthogonal
    projection, $\|h_V\|_\RKHS^2 \leq \|h^*\|_\RKHS^2$, which shows that the
    cost of $h_V$ is less than the cost of $h^*$.
\end{proof}

\blue{\subsection{Colour Transfer: RGB Point Cloud Viewpoint}\label{sec:colour_transfer_rgb}

In \cref{fig:colour_transfer_rgb}, we provide a visualisation of the colour
transfer from \cref{fig:colour_transfer} in the RGB space.}

\begin{figure}[ht]
    \begin{center}
        \includegraphics[width=.8\textwidth]{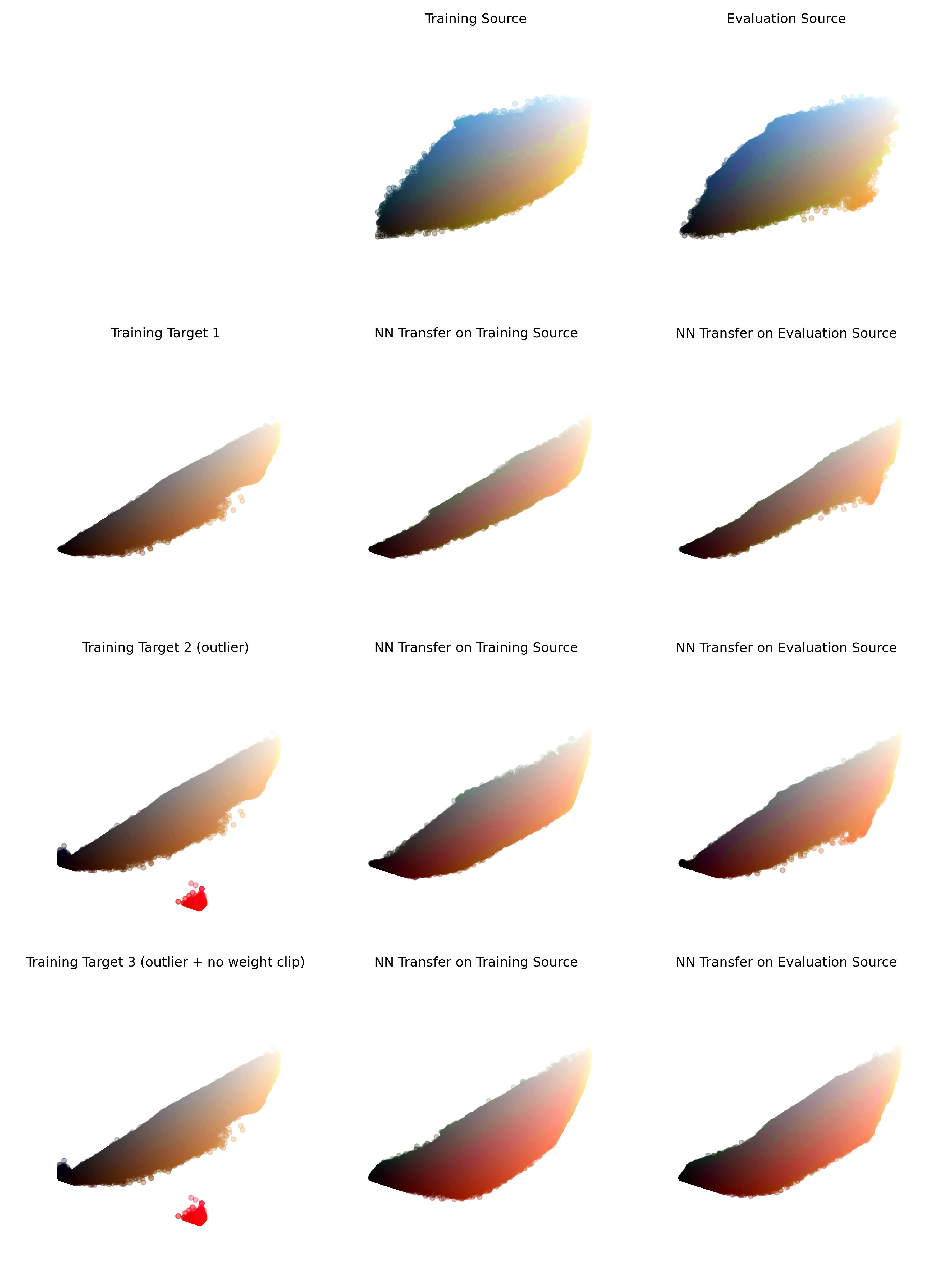}
    \end{center}
    \caption{\blue{RGB space visualisation of the colour transfer from 
    \cref{fig:colour_transfer}.}}
    \label{fig:colour_transfer_rgb}
\end{figure}
\end{document}